\providecommand{\U}[1]{\protect \rule{.1in}{.1in}}
\newtheorem{theorem}{Theorem}[section]
\newtheorem{corollary}[theorem]{Corollary}
\newtheorem{definition}[theorem]{Definition}
\newtheorem{example}[theorem]{Example}
\newtheorem{Proposition}[theorem]{Proposition}
\newtheorem{Theorem}{Theorem}
\theoremstyle{remark}
\newtheorem{Remark}[theorem]{Remark}
\numberwithin{equation}{section}
\newcommand{\p}{\partial}
\begin{document}
\title[]{Generic non-degeneracy of critical points of multiple Green functions on torus and applications to curvature equations}
\author{Zhijie Chen}
\address{Department of Mathematical Sciences, Yau Mathematical Sciences Center,
Tsinghua University, Beijing, 100084, China }
\email{zjchen2016@tsinghua.edu.cn}
\author{Erjuan Fu}
\address{Beijing Institute of Mathematical Sciences and Applications, Beijing, 101408, China}
\email{fej.2010@tsinghua.org.cn, ejfu@bimsa.cn}
\author{Chang-Shou Lin}
\address{Department of Mathematics, National Taiwan University, Taipei 10617, Taiwan }
\email{cslin@math.ntu.edu.tw}

\begin{abstract}
Let $E_{\tau}:=\mathbb{C}/(\mathbb{Z}+\mathbb{Z}\tau)$ with $\operatorname{Im}\tau>0$ be a flat torus and
$G(z;\tau)$ be the Green function on $E_{\tau}$ with the singularity at $0$. Consider the multiple Green
function $G_{n}$ on $(E_{\tau})^{n}$:
\[
G_{n}(z_{1},\cdots,z_{n};\tau):=\sum_{i<j}G(z_{i}-z_{j};\tau)-n\sum_{i=1}%
^{n}G(z_{i};\tau).
\] Recently, Lin (J. Differ. Geom. to appear) proved that there are at least countably many analytic curves in $\mathbb H=\{\tau : \operatorname{Im}\tau>0\}$ such that $G_n(\cdot;\tau)$ has degenerate critical points for any $\tau$ on the union of these curves.
In this paper, we prove that there is a measure zero subset $\mathcal{O}_n\subset \mathbb H$ (containing these curves) such that for any $\tau\in \mathbb H\setminus\mathcal{O}_n$, all critical points of $G_n(\cdot;\tau)$ are non-degenerate.
 Applications to counting the exact number of solutions of the curvature equation
$\Delta u+e^{u}=\rho \delta_{0}$ on $E_{\tau}$
 will be given.

\end{abstract}


\maketitle

\section{Introduction}

Let $\tau \in \mathbb{H}:=\{ \tau \in \mathbb C\,|\operatorname{Im}\tau>0\}$ and $E_{\tau}:=\mathbb{C}/\Lambda_{\tau}$ be a flat torus in the plane, where
$\Lambda_{\tau}=\mathbb{Z}+\mathbb{Z}\tau$. Denote also $\omega_{1}=1
$, $\omega_{2}=\tau$ and $\omega_{3}=1+\tau$. Let $G(z)=G(z;\tau)$ be
the Green function on $E_{\tau}$ defined by%
\[
-\Delta G(z;\tau)=\delta_{0}-\frac{1}{\left \vert E_{\tau}\right \vert }\text{
\ on }E_{\tau},\quad
\int_{E_{\tau}}G(z;\tau)=0,
\]
where $\delta_{0}$ is the Dirac measure at $0$ and $\vert E_{\tau}\vert$ is
the area of the torus $E_{\tau}$. It is an even function with the only
singularity at $0$. Let $E_{\tau}^{\times}:=E_{\tau}\setminus \{0\}$ and
consider the complete diagonal in $(E_{\tau}^{\times})^{n}$ for $n\in\mathbb{N}^*=\mathbb{N}\setminus\{0\}$:
\[
\Delta_{n}:=\{(z_{1},\cdot \cdot \cdot,z_{n})\in(E_{\tau}^{\times})^{n}%
|z_{j}=z_{k}\text{ for some }j\not =k\}.
\]
Let $\boldsymbol{z}=(z_{1},\cdot \cdot \cdot,z_{n})\in(E_{\tau}^{\times}%
)^{n}\backslash \Delta_{n}$ and as in \cite{CLW, CL-JDG21, Lin-JDG25, LW-CAG}, we define the \emph{multiple Green function}
$G_{n}(\boldsymbol{z})=G_{n}(\boldsymbol{z};\tau)$ on $(E_{\tau}^{\times}
)^{n}\backslash \Delta_{n}$ by%
\begin{equation}
G_{n}(\boldsymbol{z};\tau):=\sum_{j<k}G(z_{j}-z_{k};\tau)-n\sum_{j=1}%
^{n}G(z_{j};\tau).\label{multiple-G}%
\end{equation}
Note that $G_n$ is invariant under the permutation group $S_n$ and $G_1=-G$.

\medskip

\noindent{\bf Definition.} \cite{CLW}
{\it $\boldsymbol{a}=\{a_1,\cdots,a_n\}$ is called a critical point of $G_n$ if
\begin{equation}\label{poho}0=\nabla_jG_n(\boldsymbol{a}):=\sum_{k=1,\not =j}^{n}\nabla G(a_{j}-a_{k})-n\nabla G(a_{j}),\text{ \ }1\leq
j\leq n.\end{equation}
Clearly $-\boldsymbol{a}:=\{-a_1,\cdots,-a_n\}$ is also a critical point if $\boldsymbol{a}$ is. A critical point $\boldsymbol{a}$ is called trivial if
\[\{a_1,\cdots,a_n\}=\{-a_1,\cdots,-a_n\}\quad\text{in }E_{\tau};\]
 A critical point $\boldsymbol{a}$ is called nontrivial if
 \[\{a_1,\cdots,a_n\}\neq\{-a_1,\cdots,-a_n\}\quad\text{in }E_{\tau}.\]
In other words, nontrivial critical points must appear in pairs if exists.}

\medskip

Our original motivation of studying $G_{n}$ comes from its deep connection (see
\cite{CLW, CL-JDG21, Lin-JDG25, LW-CAG}) with the bubbling phenomena of semilinear elliptic partial differential equations with exponential nonlinearities in two dimension. A typical example is
 the curvature equation with parameter $\rho>0$:
\begin{equation}
\Delta u+e^{u}=\rho \delta_{0}\text{ \ on }E_{\tau}.\label{mfe}%
\end{equation}
Geometrically, a solution $u$ leads to a metric $ds^{2}=\frac{1}{2}%
e^{u}|dz|^2$ with constant Gaussian curvature $+1$ acquiring a conic
singularity at $0$. It also appears in statistical physics as the equation for
the mean field limit of the Euler flow in Onsager's vortex model (cf.
\cite{CLMP}), hence also called a mean field equation in \cite{CLW, LW, LW2} and references therein.

One important feature of (\ref{mfe}) is the so-called \emph{bubbling
phenomena}. Let $u_{k}$ be a sequence of solutions of (\ref{mfe}) with
$\rho=\rho_{k}\rightarrow8\pi n$, $n\in \mathbb{N}^*$, and $\max_{E_{\tau}}%
u_{k}(z)\rightarrow+\infty$ as $k\rightarrow+\infty$. We call $p$ a blowup point of $\{u_k\}$ if there is a sequence $\{x_k\}_k$ such that $x_k\to p$ and $u_k(x_k)\to+\infty$ as $k\to+\infty$. Then it was proved in
\cite{BT,CL-1} that $u_{k}$ always has exactly $n$ blowup points $\{a_{1}%
,\cdot \cdot \cdot,a_{n}\}$ in $E_{\tau}$ and $a_{j}\not =0$ for all $j$. Furthermore,
\[u_k(z)+\rho_kG(z)-\int_{E_{\tau}}u_k=\int_{E_\tau}G(z-y)e^{u_k(y)}dy\to 8\pi\sum_{j}G(z-a_j)\]
uniformly in $K\Subset E_{\tau}\setminus\{a_1,\cdots, a_n\}$. From here,
the
well-known Pohozaev identity says that the positions of these blowup
points could be determined by equations%
\[
n\nabla G(a_{j})=\sum_{k=1,\not =j}^{n}\nabla G(a_{j}-a_{k}),\text{ \ }1\leq
j\leq n,\]
i.e. the blowup point set $\{a_{1},\cdot \cdot \cdot,a_{n}\}$ gives a
critical point of $G_{n}$. More precisely,

\begin{Theorem}\cite{CLW}\label{Thm-A} Fix $\tau$ and let $u_{k}$ be a
sequence of bubbling solutions of \eqref{mfe} with $\rho=\rho
_{k}\rightarrow8\pi n$, $n\in \mathbb{N}^*$.
\begin{itemize}
\item[(1)] If $\rho_{k}\not =8\pi
n$ for large $k$, then the blowup set $\boldsymbol{a}=\{a_{1},\cdot \cdot \cdot
,a_{n}\}$ gives a trivial critical point of
$G_{n}$.

\item[(2)] If $\rho_{k}\equiv 8\pi
n$ for large $k$, then the blowup set $\boldsymbol{a}=\{a_{1},\cdot \cdot \cdot
,a_{n}\}$ gives a nontrivial critical point of
$G_{n}$.
\end{itemize}
Furthermore, 
\begin{equation}
\Delta u+e^{u}=8\pi n \delta_{0}\text{ \ on }E_{\tau},\label{mfe-n}%
\end{equation}
has solutions if and only if $G_{n}(\boldsymbol{z};\tau)$ has nontrivial critical points.
\end{Theorem}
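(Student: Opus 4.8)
The plan is to isolate the genuinely new content of Theorem~\ref{Thm-A}. The blowup analysis quoted just before the statement already supplies, for any bubbling sequence $u_k$ with $\rho_k\to8\pi n$, a blowup set $\boldsymbol a=\{a_1,\dots,a_n\}\subset E_\tau^{\times}$, and the Pohozaev identity forces $\boldsymbol a$ to solve \eqref{poho}, i.e.\ to be a critical point of $G_n$. What remains is (i) the dichotomy ``trivial when $\rho_k\ne8\pi n$, nontrivial when $\rho_k\equiv8\pi n$'' and (ii) the solvability dictionary at the critical value $8\pi n$. I would handle (i) by a refined asymptotic analysis of $u_k$ and (ii) by the correspondence between solutions of \eqref{mfe-n} and a Lamé equation on $E_\tau$.

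For (i): since $0$ is a $2$-torsion point, $u_k\circ(-\mathrm{id})$ is again a solution of \eqref{mfe} with parameter $\rho_k$, whose blowup set is $-\boldsymbol a$; hence both $\boldsymbol a$ and $-\boldsymbol a$ are critical points of $G_n$. To decide whether $\boldsymbol a=-\boldsymbol a$ as subsets of $E_\tau$, I would push the bubble expansion $u_k=$ (sum of $n$ standard bubbles at the $a_j$, with concentration and scaling parameters) $+\,w_k$ one order further than needed to derive \eqref{poho}, and test the equation against the approximate kernel spanned by translations and scalings of the bubbles. The order-$o(1)$ solvability conditions reproduce $\nabla_jG_n(\boldsymbol a)=0$; the next-order conditions relate $\rho_k-8\pi n$ to a pairing of $\boldsymbol a,-\boldsymbol a$ against the Hessian of $G_n$ at $\boldsymbol a$. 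One then shows this relation is compatible with $\rho_k\ne8\pi n$ (for large $k$) only when $\boldsymbol a$ is trivial, which gives (1), and forces $\rho_k\equiv8\pi n$ when $\boldsymbol a$ is nontrivial, which gives (2). (Alternatively, both can be read off from the monodromy data of the ODE in the next step.)

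For (ii), the plan is to set up the classical developing-map correspondence. If $u$ solves \eqref{mfe-n}, the metric $\tfrac12e^u|dz|^2$ has Gaussian curvature $+1$ with one conic singularity at $0$ whose angle is the multiple of $2\pi$ determined by $n$, so it admits a multivalued developing map $f$ with $u=\log\frac{8|f'|^2}{(1+|f|^2)^2}$ and Schwarzian $\{f;z\}$ a single-valued elliptic function with a double pole at $0$ of fixed coefficient; this forces $f$ to be built from solutions of a Lamé equation $y''=\bigl(n(n+1)\wp(z)+B\bigr)y$ for a suitable accessory parameter $B=B(u)$, via $f=y_1/y_2$. The Hermite--Halphen ansatz writes an eigenfunction as $y(z)=e^{cz}\prod_{j=1}^{n}\sigma(z-a_j)/\sigma(z)$, and the algebraic system that $\{a_1,\dots,a_n\}$ must satisfy for this to solve the Lamé equation is equivalent to the critical point system \eqref{poho} for $G_n$; thus every solution $u$ yields a critical point $\boldsymbol a$ of $G_n$, and conversely a critical point produces a Lamé potential and hence a candidate $u$. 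The decisive remaining point is that the candidate $f$ descends to a genuine, real, positive-curvature metric on $E_\tau$ exactly when the projective monodromy of the Lamé equation lies in $PSU(2)$ and is not of the degenerate type produced by a \emph{trivial} critical point (where the divisor of $y$ is $(-\mathrm{id})$-symmetric and the monodromy is reducible or its image in the torus vanishes); one shows this unitarity-plus-nondegeneracy is equivalent to $\boldsymbol a$ being \emph{nontrivial}, which closes the ``if and only if''.

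I expect the main obstacle to be this last equivalence: that nontriviality of $\boldsymbol a$ is equivalent to unitarizability of the Lamé monodromy at the accessory parameter attached to $\boldsymbol a$. This is where one must bring in the hyperelliptic curve parametrizing the admissible values of $B$ together with the modular-forms input that detects unitary monodromy — substantially deeper than the PDE arguments in part~(i). A secondary difficulty, for the direction ``nontrivial critical point $\Rightarrow$ \eqref{mfe-n} solvable'' if one instead does it by PDE gluing, is that the standard Lyapunov--Schmidt reduction is degenerate at the critical value $\rho=8\pi n$ (one reduced equation vanishes to leading order); nontriviality is precisely what restores its solvability, and verifying this requires carrying the reduction to higher order.
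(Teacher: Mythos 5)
Theorem \ref{Thm-A} is quoted from \cite{CLW} and the present paper contains no proof of it, so your proposal has to be measured against the argument in \cite{CLW}; you have correctly identified its overall architecture for the ``furthermore'' part (developing map, Lam\'e equation, Hermite--Halphen ansatz, unitarizable monodromy), but your dictionary contains a genuine error. The algebraic system that $\{a_1,\dots,a_n\}$ must satisfy for the Hermite--Halphen ansatz to solve the Lam\'e equation is only the holomorphic system \eqref{ff3-1}, i.e.\ membership in the curve $Y_n$ of \eqref{hyper}; it is \emph{not} equivalent to the critical point system \eqref{poho}, which contains in addition the real equation \eqref{ff4}. Via \eqref{G_z} and \eqref{de-rs}, equation \eqref{ff4} is exactly the statement that the monodromy data $(r,s)$ is real, i.e.\ that the diagonal monodromy in \eqref{mono1}--\eqref{mono2} is unitary. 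Hence unitarity is not, as you assert, equivalent to nontriviality of a critical point: among non-branch points of $Y_n$ unitarity is a genuine extra condition (two real codimensions), and it is precisely this condition that single-valuedness of the metric supplies; what nontriviality, equivalently $\ell_n(B_{\boldsymbol a})\neq 0$, rules out is the branch points, where $y_{\boldsymbol a}$ and $y_{-\boldsymbol a}$ are dependent and the monodromy is non-diagonalizable, hence non-unitarizable. As written, your identification would make every non-branch point of $Y_n(\tau)$ produce a solution of \eqref{mfe-n}, which is false; the correct chain is: solution of \eqref{mfe-n} $\Leftrightarrow$ $\boldsymbol a\in Y_n$ with $\ell_n(B_{\boldsymbol a})\neq0$ and $(r,s)\in\mathbb{R}^2$ $\Leftrightarrow$ $\boldsymbol a$ is a nontrivial critical point (compare Theorems \ref{thm-CLW} and \ref{thm-clw2}).

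The dichotomy (1)--(2) is the genuinely hard analytic content of Theorem \ref{Thm-A}, and your plan for it is not an argument. The proposed next-order Lyapunov--Schmidt step ``relating $\rho_k-8\pi n$ to a pairing of $\boldsymbol a,-\boldsymbol a$ against the Hessian of $G_n$'' is a guess: no such identity is derived, and any argument leaning on properties of $D^2G_n(\boldsymbol a)$ is exactly what cannot be assumed here, since degenerate critical points do occur (that is the subject of the present paper), so no sign or nondegeneracy of that Hessian is available. In \cite{CLW} statement (1) is obtained from the refined blowup estimates of \cite{CL-1} combined with the Lam\'e/hyperelliptic-curve structure, by showing that for $\rho_k\neq8\pi n$ the limiting configuration must be a branch point of $Y_n$; the mechanism is exact (monodromy/degeneration of $y_{\pm\boldsymbol a}$), not a formal higher-order expansion whose leading coefficient might vanish. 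Moreover, what you sketch is the contrapositive of (1) (``nontrivial $\Rightarrow\rho_k\equiv8\pi n$''), not statement (2): to prove (2) one must show that a blowing-up sequence of solutions at the exact parameter $8\pi n$ cannot concentrate at a trivial critical point, and this rests on the classification of all solutions of \eqref{mfe-n} through the Lam\'e correspondence (each solution belongs to a one-parameter scaling family attached to a nontrivial critical point and blows up precisely at that set), which your proposal only gestures at in a parenthesis. So the proposal captures the right framework for the solvability criterion but misplaces the key reality/unitarity condition and leaves the bubbling dichotomy essentially unproved.
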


Remark that in Theorem \ref{Thm-A}, if we also let $\tau=\tau_k\to \tilde{\tau}$ with $\tau_k\neq \tilde{\tau}$ for large $k$, then the assertion (1) still holds but the assertion (2) might not; see \cite{Lin-JDG25} for details.

Theorem \ref{Thm-A} highlights the importance of both trivial and nontrivial critical points of $G_n$.
Naturally we are concerned with the following fundamental questions:

\begin{itemize}
\item[(a)] \emph{How many trivial (resp. nontrivial) critical points does $G_{n}$ have? Remark that it is not obvious to see whether the set of critical points of $G_n$ is finite or not.}
\item[(b)] \emph{Is a critical point non-degenerate? Or equivalently, does the Hessian $$\det D^2G_n(\boldsymbol{a})\neq 0$$ for a critical point $\boldsymbol{a}$? Note that $D^2G_n(\boldsymbol{a})$ is a $2n\times 2n$ matrix of the second derivatives of $G_n$, so $\det D^2G_n(\boldsymbol{a})$ is difficult to compute in general}.
\end{itemize}

The case $n=1$ (note $G_{1}=-G$) was first studied by Lin and Wang \cite{LW}. Since $G(z)$ is even and doubly periodic, it
always has three trivial critical points $\frac{\omega_{k}}{2}$, $k=1,2,3$,
and nontrivial critical points must appear in pairs if exist. It was proved in \cite{LW} that
$G(z;\tau)$ has \emph{at most one pair of nontrivial critical
points} (depending on the choice of $\tau$). For example, if $\tau=e^{\pi i/3}$, then $G(z;\tau)$ has exactly one pair of nontrivial critical points $\pm\frac{\omega_3}{3}$; while for $\tau \in i\mathbb{R}_{>0}$, i.e. $E_{\tau}$ is a rectangular torus, $G(z;\tau)$ has no nontrivial critical points.
Later, a complete characterization of those $\tau$'s such that $G(z;\tau)$
has a pair of nontrivial critical points was given in \cite{BE,CKLW,LW4}.

For general $n\geq2$, $G_n$ was first studied by Chai, Lin and Wang \cite{CLW} from the viewpoint of algebraic geometry. Among other
things, they established the one-to-one correspondence between
trivial critical points of $G_{n}$ and branch points of a hyperelliptic curve
$Y_{n}$ concerning the classical Lam\'{e} equation; see Section 2 for a brief review. Consequently, they can solve Question (a) for trivial critical points.

\begin{Theorem}\cite{CLW}\label{Thm-B} $G_{n}(\boldsymbol{z};\tau)$ always has trivial critical points, and the number of trivial critical points is
 at most $2n+1$ (resp. exactly $2n+1$) for any
$\tau$ (resp. for almost all $\tau$ including $\tau \in i\mathbb{R}_{>0}$).
\end{Theorem}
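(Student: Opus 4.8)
The plan is to follow the algebro-geometric strategy of \cite{CLW} and reduce the enumeration of trivial critical points of $G_{n}(\cdot;\tau)$ to a statement about the roots of the classical Lam\'e spectral polynomial. Concretely, I will invoke the one-to-one correspondence recalled in Section~2 between trivial critical points of $G_{n}(\cdot;\tau)$ and the branch points of the hyperelliptic Lam\'e curve
\[
Y_{n}=Y_{n}(\tau)=\bigl\{(B,C)\in\mathbb{C}^{2}\,:\,C^{2}=\ell_{n}(B;\tau)\bigr\},
\]
where $\ell_{n}(B;\tau)$ is the spectral polynomial attached to the Lam\'e equation $y''=\bigl(n(n+1)\wp(z;\tau)+B\bigr)y$. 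The key structural input, classical and going back to Halphen, is that $\ell_{n}(B;\tau)$ is \emph{monic of degree $2n+1$ in $B$}, with coefficients that are polynomials in $g_{2}(\tau),g_{3}(\tau)$; in particular $\ell_{n}(\cdot;\tau)$ is a nonconstant polynomial over $\mathbb{C}$ whose coefficients depend holomorphically on $\tau\in\mathbb{H}$. Granting the correspondence, we obtain
\[
\#\{\text{trivial critical points of }G_{n}(\cdot;\tau)\}=\#\{\text{distinct roots of }\ell_{n}(\cdot;\tau)\}.
\]

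From this, both the existence assertion and the upper bound are formal: a nonconstant monic polynomial of degree $2n+1$ over $\mathbb{C}$ has at least one and at most $2n+1$ distinct roots, so $G_{n}(\cdot;\tau)$ always has a trivial critical point and never more than $2n+1$ of them. (As a consistency check, for $n=1$ one computes $\ell_{1}(B;\tau)=B^{3}-\tfrac14 g_{2}B-\tfrac14 g_{3}$, whose three roots $e_{1},e_{2},e_{3}$ are distinct for every genuine torus, matching the three half-period critical points of $G_{1}=-G$.) Moreover $\ell_{n}(\cdot;\tau)$ has \emph{exactly} $2n+1$ distinct roots precisely when it is separable, i.e.\ when its discriminant
\[
\mathfrak{d}_{n}(\tau):=\operatorname{disc}_{B}\ell_{n}(B;\tau)
\]
is nonzero, and in that case the correspondence is a genuine bijection with the $2n+1$ simple roots. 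Since $\mathfrak{d}_{n}$ is a polynomial in $g_{2}(\tau),g_{3}(\tau)$, it is holomorphic on the domain $\mathbb{H}$; if it is not identically zero, its zero set is discrete, hence of Lebesgue measure zero.

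It therefore remains to prove that $\mathfrak{d}_{n}(\tau)\neq0$ for every $\tau\in i\mathbb{R}_{>0}$. This single claim yields everything: it gives ``exactly $2n+1$'' on $i\mathbb{R}_{>0}$, it forces $\mathfrak{d}_{n}\not\equiv0$, and the measure-zero assertion then follows from the previous paragraph. To prove it, fix $\tau=it$ with $t>0$; then the lattice $\Lambda_{it}$ is stable under complex conjugation, so $\wp(x;it)$ is real for $x\in\mathbb{R}$ and the real Lam\'e operator $H=-\tfrac{d^{2}}{dx^{2}}+n(n+1)\wp(x;it)$ is a real Schr\"odinger operator with a $1$-periodic potential having only second-order (apparent) singularities at the integers. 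By the classical finite-gap theory of the Lam\'e potential --- equivalently, by Ince's theorem on the distinctness of the characteristic values of the $2n+1$ Lam\'e polynomials for real invariants --- the spectrum of $H$ has exactly $n$ nondegenerate gaps, and the $2n+1$ roots of $\ell_{n}(\cdot;it)$ are precisely the $2n+1$ periodic/antiperiodic band edges of $H$; being the endpoints of $n$ genuinely open gaps, they are pairwise distinct, so $\mathfrak{d}_{n}(it)\neq0$.

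The main obstacle is exactly this last step: the separability of the Lam\'e spectral polynomial over rectangular tori, equivalently the non-degeneracy (``all $n$ gaps open'') of the real Lam\'e potential. Logically upstream, everything rests on the correspondence of \cite{CLW}, which trades the a priori transcendental condition ``$\boldsymbol{a}$ is a trivial critical point of $G_{n}$'' for the purely algebraic condition ``$B$ is a branch point of $Y_{n}$''; it is this identification that makes the count $2n+1$ visible at all, and the rest of the argument merely exploits the degree and the discriminant of $\ell_{n}$.
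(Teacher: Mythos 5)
Your proposal matches the paper's own treatment: the paper obtains Theorem \ref{Thm-B} directly from the correspondence in Theorem \ref{thm-CLW}-(1) between trivial critical points of $G_n(\cdot;\tau)$ and distinct roots of $\ell_n(\cdot;\tau)$, combined with the cited fact that $\ell_n(\cdot;\tau)$ has degree $2n+1$ and has $2n+1$ distinct roots for almost all $\tau$, including $\tau\in i\mathbb{R}_{>0}$. The only difference is that you additionally sketch a proof of that separability fact (holomorphy of the discriminant in $\tau$ plus Ince's theorem on the distinctness of the Lam\'e band edges for rectangular tori), which the paper simply quotes from \cite{CLW}.
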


Recently, Chen and Lin \cite{CL-JDG21} obtained a complete answer of Questions (a)-(b) for $\tau \in i\mathbb{R}_{>0}$, i.e. $E_{\tau}$ is a rectangular torus.

\begin{Theorem}\cite{CL-JDG21}
\label{Thm-C} Let $\tau \in i\mathbb{R}_{>0}$. Then $G_{n}%
(\boldsymbol{z};\tau)$ has exactly
$2n+1$ critical points, which are all trivial critical points. Furthermore, they are all non-degenerate, i.e.
 $\det D^{2}G_{n}(\boldsymbol{a};\tau)$ $\neq0$ for any critical point
$\boldsymbol{a}$.
\end{Theorem}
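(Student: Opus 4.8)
The claim contains two assertions for $\tau\in i\mathbb R_{>0}$: (I) \emph{every} critical point of $G_n(\cdot;\tau)$ is trivial, so that by Theorem~\ref{Thm-B} there are exactly $2n+1$ of them; and (II) each of these is non-degenerate. I would prove (I) first, then use the Chai--Lin--Wang correspondence together with Theorem~\ref{Thm-B} to set up (II) and finish by a determinant computation on the Lam\'e curve $Y_n$. The two places where rectangularity enters are, respectively, a reflection/reality symmetry and the real structure of $Y_n$.

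\textbf{Part (I).} By Theorem~\ref{Thm-A} it is equivalent to show that $\Delta u+e^u=8\pi n\,\delta_0$ has no solution on $E_\tau$ when $\tau\in i\mathbb R_{>0}$. Through the developing map of such a solution, this amounts to excluding a Lam\'e equation $y''=[n(n+1)\wp(z;\tau)+B]y$ whose projective monodromy is simultaneously diagonalizable with unimodular eigenvalues $e^{\pm2\pi i s}$ (along $1$) and $e^{\mp2\pi i r}$ (along $\tau$) with $(r,s)\notin\tfrac12\mathbb Z^2$; here $(B,r,s)$ runs over $Y_n$, whose branch points — the $2n+1$ trivial critical points of $G_n$ — are exactly the points with $(r,s)\in\tfrac12\mathbb Z^2$. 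Since $s$ is real precisely when $B$ lies in the spectrum of the real periodic operator $-\tfrac{d^2}{dx^2}+n(n+1)\wp(x;\tau)$ along the $1$-direction, and $r$ is real precisely when $B$ lies in the spectrum of the companion operator along the $\tau$-direction, the required statement is that these two spectra, as subsets of $\mathbb R$, meet \emph{only} at their $2n+1$ common band edges, i.e.\ the bands of one lie over the gaps of the other. I would deduce this from the reality structure of $Y_n$ when $\tau\in i\mathbb R_{>0}$ (the branch polynomial $\ell_n(B;\tau)$ has real coefficients, so $Y_n$ carries an anti-holomorphic involution) together with an interlacing of the two band structures. This is the main obstacle: for $n=1$ it is the Lin--Wang fact quoted in the introduction (one checks directly that $G(\cdot;\tau)|_{\frac{\omega_k}{2}+\mathbb R\omega_k}$ has its only critical point at $\frac{\omega_k}{2}$), but the general case needs the finer analysis of $Y_n$ and its real loci.

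\textbf{Part (II), reduction.} Let $\boldsymbol a$ be a trivial critical point; by (I) these exhaust the critical points, and by the Chai--Lin--Wang correspondence and Theorem~\ref{Thm-B} the associated $2n+1$ branch points of $Y_n$ are pairwise distinct, hence all simple, so $Y_n$ is smooth. Writing $\partial_{z_j}G_n$ in Weierstrass form, the equations $\nabla_jG_n(\boldsymbol z)=0$ become: (i) the Hermite--Halphen system $\sum_{k\neq j}\zeta(z_j-z_k)-n\zeta(z_j)=-c$ $(1\le j\le n)$, which cuts out a smooth branch of $Y_n$ on which $Z:=\sum_j z_j$ and $c$ are holomorphic; and (ii) the single complex relation $c=\eta_1 Z-\tfrac{\pi}{\operatorname{Im}\tau}(Z-\bar Z)$ with $\eta_1=2\zeta(\tfrac12)$. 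Differentiating (i)--(ii) at $\boldsymbol a$ and using the smoothness of $Y_n$ (so (i) has full transverse rank), one is led to: $\det D^2G_n(\boldsymbol a;\tau)\neq0$ is equivalent to invertibility of the real-linear map $\delta\mapsto g'(0)\delta-\tfrac{\pi}{\operatorname{Im}\tau}\overline{Z'(0)}\,\bar\delta$ on a local coordinate $w$ of $Y_n$ at the branch point ($B-B_{\boldsymbol a}\sim w^2$), where $g=c-\eta_1Z+\tfrac{\pi}{\operatorname{Im}\tau}Z$. A short computation using the Legendre relation and $Z=-(r+s\tau)$ yields $g=-\tfrac{\pi}{\operatorname{Im}\tau}(r+s\bar\tau)$, hence
\[
\det D^2G_n(\boldsymbol a;\tau)=(\text{nonzero factor})\cdot\operatorname{Im}\!\big(r'(0)\,\overline{s'(0)}\big),
\]
so non-degeneracy is \emph{equivalent} to $r'(0)$ and $s'(0)$ being $\mathbb R$-linearly independent in $\mathbb C$.

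\textbf{Part (II), conclusion.} Here I would use both the simplicity of the branch point and $\tau\in i\mathbb R_{>0}$. From $2\cos(2\pi s)=\operatorname{tr}M_1(B)=:\Delta_1(B)$ and the fact that $\Delta_1^2-4$ is, up to an entire perfect-square factor for the closed gaps, a degree-$(2n+1)$ polynomial whose roots are the band edges, simplicity of $B_{\boldsymbol a}$ gives $\Delta_1'(B_{\boldsymbol a})\neq0$; hence $s-s(0)\sim(\text{nonzero const})\cdot w$, i.e.\ $s'(0)\neq0$, and likewise $r'(0)\neq0$ from the companion discriminant $\Delta_2$. Finally, when $\tau\in i\mathbb R_{>0}$ the anti-holomorphic involution of $Y_n$ fixes the real branch point and acts on $w$ by $w\mapsto\pm\bar w$, while $\eta_1\in\mathbb R$ and $\eta_2=2\zeta(\tfrac\tau2)\in i\mathbb R$; tracing this through the definitions of $r,s$ forces $\{r'(0),s'(0)\}\subset\mathbb R\cup i\mathbb R$ with one real and the other purely imaginary, so $r'(0),s'(0)\neq0$ already gives $\operatorname{Im}(r'(0)\overline{s'(0)})\neq0$, whence $\det D^2G_n(\boldsymbol a;\tau)\neq0$. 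I expect (I) to be the genuinely hard part; once nontrivial critical points are excluded, (II) is the determinant identity above combined with the two square-root-branching facts and the reality bookkeeping.
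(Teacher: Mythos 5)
First, a point of reference: the paper does not prove Theorem \ref{Thm-C} at all --- it is quoted from \cite{CL-JDG21} --- so there is no internal proof to compare against, and your proposal has to stand on its own. As written it does not: the decisive gap is your Part (I). The entire content of ``exactly $2n+1$ critical points, all trivial'' is the absence of nontrivial critical points on rectangular tori; you correctly translate this (via Theorems \ref{Thm-A}, \ref{CLWWW}, \ref{thm-clw2}) into the statement that the spectra of the two real Hill operators attached to the directions $1$ and $\tau$ meet only at the $2n+1$ common band edges, but you then label this interlacing ``the main obstacle,'' handle only $n=1$ by quoting Lin--Wang, and defer the general case to an unspecified ``finer analysis of $Y_n$ and its real loci.'' That deferred step \emph{is} the substance of the cited theorem, so the proposal is a programme rather than a proof. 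Note that the missing step is in fact within reach of inputs you already invoke in Part (II): by the classical theorem of Ince all $n$ gaps of the Lam\'e potential are open, so both the direction-$1$ operator and the direction-$\tau$ operator (which, after the substitution $z=\tfrac12+\tau x$, is again a Lam\'e operator with spectral parameter multiplied by $\tau^{2}=-t^{2}<0$) have their $2n+1$ \emph{simple} band edges exactly at the real, distinct zeros of $\ell_n(\cdot;\tau)$; since in the common $B$-variable one spectrum is then a union of $n+1$ bands unbounded below and the other a union unbounded above, with the same edge set, they can only intersect at those edges. Without this (or some substitute), Part (I), and hence the count $2n+1$, is unproven; you also use without justification the (true, standard) fact that $s\in\mathbb{R}$ or $r\in\mathbb{R}$ already forces $B$ into the real spectrum of the corresponding self-adjoint operator, ruling out complex $B$.

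Part (II) is plausible and its skeleton matches the known computations, but several load-bearing steps are asserted rather than proved: the identity $\det D^{2}G_n(\boldsymbol{a};\tau)=(\text{nonzero factor})\cdot\operatorname{Im}\bigl(r'(0)\overline{s'(0)}\bigr)$ at a branch point is precisely the Lin--Wang computation \cite{LW-CAG} which the present paper cites and adapts (Proposition \ref{Jacobian} is its non-branch analogue), not something you derive; the inference ``simplicity of $B_{\boldsymbol{a}}$ gives $\Delta_1'(B_{\boldsymbol{a}})\neq0$'' needs the nontrivial inputs that the zeros of $\ell_n(\cdot;\tau)$ are real and distinct for rectangular $\tau$ and that each is a \emph{simple} periodic/antiperiodic eigenvalue of each direction operator (again Ince: no closed gap sits at a zero of $\ell_n$); and the ``reality bookkeeping'' requires actually setting up the anti-holomorphic involution on $Y_n$ and checking that it acts on the local coordinate by $w\mapsto\pm\bar w$, though the conclusion is correct --- for $n=1$ your criterion reproduces degeneracy exactly when $e_k+\eta_1\in\{0,\,2\pi/\operatorname{Im}\tau\}$, which agrees with a direct computation from \eqref{G_z}. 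In short: Part (II) is a reasonable plan contingent on classical finite-gap facts, but Part (I) is a genuine gap, and the theorem is not established by the proposal as written.
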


However, Questions (a)-(b) remain largely open for general $\tau\in\mathbb H$. By Theorem \ref{Thm-A}, we define
\begin{align}\label{eq-en}\mathcal{E}_n:=&\{\tau\in\mathbb H\;:\; G_n(\boldsymbol{z};\tau)\;\text{has nontrivial critical points}\}\\
=& \{\tau\in\mathbb H\;:\; \text{The curvature equation \eqref{mfe-n} has solutions}\}.\nonumber
\end{align}
Then Theorem \ref{Thm-C} implies $i\mathbb{R}_{>0}\cap\mathcal{E}_n=\emptyset$.
Recently, Lin \cite{Lin-JDG25} gave a characterization of $\mathcal{E}_n$ and, among other things, proved that $\mathcal{E}_n\neq\emptyset$ is an open set.

\begin{Theorem}\cite{Lin-JDG25}
$\mathcal{E}_n\neq\emptyset$ is an unbounded and open subset in $\mathbb{H}$, and $\overline{\mathcal{E}_n}\subsetneqq \mathbb H\setminus i\mathbb{R}_{>0}$, where $\overline{\mathcal{E}_n}$ is the closure of $\mathcal{E}_n$ in $\mathbb H$.
\end{Theorem}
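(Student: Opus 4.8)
The plan is to prove the four assertions---nonemptiness, unboundedness, openness, and $\overline{\mathcal E_n}\subsetneqq\mathbb H\setminus i\mathbb R_{>0}$---largely separately, exploiting throughout the equivalence \eqref{eq-en} between nontrivial critical points of $G_n$ and solvability of \eqref{mfe-n}. The inclusion $\mathcal E_n\subseteq\mathbb H\setminus i\mathbb R_{>0}$ is immediate from Theorem~\ref{Thm-C}. The key structural input is the $SL(2,\mathbb Z)$-invariance of $\mathcal E_n$: for $\gamma=\left(\begin{smallmatrix}a&b\\c&d\end{smallmatrix}\right)$ the homothety $w\mapsto(c\tau+d)w$ is a conformal isomorphism $E_{\gamma\tau}\xrightarrow{\sim}E_\tau$ fixing $0$ and commuting with $z\mapsto-z$; a direct check on $-\Delta G=\delta_0-|E_\tau|^{-1}$, $\int G=0$ shows $G((c\tau+d)w;\tau)=G(w;\gamma\tau)$, hence $G_n((c\tau+d)\boldsymbol w;\tau)=G_n(\boldsymbol w;\gamma\tau)$. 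Thus critical points correspond bijectively and the trivial/nontrivial dichotomy is preserved, giving $\gamma\cdot\mathcal E_n=\mathcal E_n$.

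Granting this, unboundedness becomes a consequence of nonemptiness: for $\tau_0\in\mathcal E_n$ and $\gamma_N=\left(\begin{smallmatrix}1&0\\N&1\end{smallmatrix}\right)$ one has $\gamma_N\tau_0\in\mathcal E_n$ with $\operatorname{Im}(\gamma_N\tau_0)=\operatorname{Im}\tau_0/|N\tau_0+1|^2\to0$, so $\mathcal E_n$ leaves every compact subset of $\mathbb H$. For nonemptiness itself I would argue by bifurcation from the degeneracy curves furnished by Lin's earlier theorem (cited in the abstract): on such a curve $G_n(\cdot;\tau_*)$ has a degenerate critical point $\boldsymbol a_*$, and when $\boldsymbol a_*$ is trivial the reflection $\boldsymbol z\mapsto-\boldsymbol z$ splits $\ker D^2G_n(\boldsymbol a_*)$ into even and odd parts; an odd kernel direction produces a pitchfork, so as $\tau$ crosses $\tau_*$ the configuration $\boldsymbol a_*$ sheds a symmetric pair $\pm\boldsymbol b$ of nontrivial critical points and $\mathcal E_n\neq\emptyset$. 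Alternatively one constructs solutions of \eqref{mfe-n} directly by gluing $n$ concentrated bubbles at an approximate critical configuration for suitable $\tau$.

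Openness is the crux and the main obstacle. The mechanism is the implicit function theorem applied to \eqref{poho}: if $\tau_0\in\mathcal E_n$ carries a nontrivial critical point $\boldsymbol a_0$ with $\det D^2G_n(\boldsymbol a_0;\tau_0)\neq0$, then \eqref{poho} is solvable for $\boldsymbol a(\tau)$ near $\tau_0$, and since nontriviality $\{a_j\}\neq\{-a_j\}$ is an open condition, a full neighborhood of $\tau_0$ lies in $\mathcal E_n$. The real difficulty---and the reason the statement does not simply follow from this paper's generic non-degeneracy---is to guarantee a \emph{nondegenerate} nontrivial critical point at \emph{every} $\tau_0\in\mathcal E_n$, not only for generic $\tau_0$. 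I would resolve this through the boundary/fold structure: along $\partial\mathcal E_n$ a nontrivial pair $\pm\boldsymbol b$ can only degenerate by colliding into a single trivial critical point, a fold forced by the reflection symmetry, so degenerate nontrivial configurations are confined to the fold curves---which coincide with part of Lin's degeneracy curves and lie outside $\mathcal E_n$. Hence every interior point of $\mathcal E_n$ admits a nondegenerate nontrivial critical point and the implicit function theorem applies. Making this rigorous---excluding in particular a $\tau_0\in\mathcal E_n$ all of whose nontrivial critical points degenerate simultaneously---is the heart of the matter; a robust substitute, should the fold analysis prove elusive, is a Leray--Schauder degree argument for \eqref{mfe-n}, showing that the degree jump across the critical mass $8\pi n$ which detects solutions is locally constant in $\tau$.

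Finally, for $\overline{\mathcal E_n}\subsetneqq\mathbb H\setminus i\mathbb R_{>0}$ it suffices to produce an open set $U\supseteq i\mathbb R_{>0}$ with $U\cap\mathcal E_n=\emptyset$: then $U$ open forces $U\cap\overline{\mathcal E_n}=\emptyset$, so the closure misses the axis (the inclusion) while any off-axis point of $U$ witnesses properness. I would obtain $U$ by a compactness argument against Theorem~\ref{Thm-C}. If $\tau_k\to\tau_\infty\in i\mathbb R_{>0}$ with nontrivial critical points $\boldsymbol a^{(k)}$, pass to a limit $\boldsymbol a^{(\infty)}$; provided no collision occurs (no $a_j^{(k)}\to0$ and no $a_j^{(k)}-a_l^{(k)}\to0$), $\boldsymbol a^{(\infty)}$ is a critical point of $G_n(\cdot;\tau_\infty)$, hence trivial and nondegenerate by Theorem~\ref{Thm-C}; but nondegeneracy makes the local continuation unique and trivial, so $\boldsymbol a^{(k)}$ would be trivial for large $k$, a contradiction. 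The technical heart here is the collision exclusion---keeping the critical points uniformly away from $\Delta_n$ and from $0$ as $\tau_k\to\tau_\infty$---which I would control through the balancing in \eqref{poho} (the singular parts of $\nabla G$ cannot cancel across a collision) together with continuity of the associated Lam\'e spectral data; uniformity over the entire half-line is reduced to a compact arc via the symmetry $\tau\mapsto-1/\tau$ and the $\operatorname{Im}\tau\to\infty$ asymptotics.
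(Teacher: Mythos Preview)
This theorem is quoted from \cite{Lin-JDG25} and is not proved in the present paper; there is no in-paper proof to compare against. What the paper does recall from \cite{Lin-JDG25}, in Section~4, is the structural description
\[
\mathcal E_n\cap F_0=\bigcup_{j=1}^{n(n+1)/2}\tau_j(\Omega_j),
\]
where the $\tau_j$ are real-analytic maps built from the zero locus of the pre-modular form $Z_{r,s}^{(n)}(\tau)$ together with the simple-zero property (Theorem~\ref{thm-sz}). The approach to openness, nonemptiness, etc.\ in \cite{Lin-JDG25} goes through this parametrization and the monodromy/pre-modular machinery, not through Hessian nondegeneracy of $G_n$.

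Your $SL(2,\mathbb Z)$-invariance argument is correct, and deducing unboundedness from nonemptiness via $\gamma_N$ is fine. But the openness argument has a genuine gap. Your proposed mechanism requires that every $\tau_0\in\mathcal E_n$ carry at least one \emph{nondegenerate} nontrivial critical point. Your justification---that degenerate nontrivial configurations occur only on fold curves ``which lie outside $\mathcal E_n$''---is circular: a fold of a nontrivial pair into a trivial point is a boundary phenomenon, but nothing you have said rules out an interior $\tau_0\in\mathcal E_n$ at which every nontrivial critical point happens to be degenerate without colliding. Indeed the present paper's Theorem~\ref{main-thm-1} only shows such $\tau_0$ form a measure-zero set, which is not enough. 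The degree substitute also fails as stated: the Leray--Schauder degree for \eqref{mfe} is defined for $\rho\notin 8\pi\mathbb N$, and the jump $d_{n+1}-d_n$ being locally constant in $\tau$ says nothing about solvability \emph{at} $\rho=8\pi n$; by Theorem~\ref{Thm-A}(2) the bubbling contributions responsible for the jump come from nontrivial critical points, so you would be assuming what you want to prove. Your nonemptiness sketch (pitchfork from an odd kernel direction) and your closure argument (collision exclusion as $\tau_k\to i\mathbb R_{>0}$) are both plausible in spirit but each hides a nontrivial verification that you have not carried out. The route actually taken in \cite{Lin-JDG25} sidesteps all of this by working with $Z_{r,s}^{(n)}$ directly.
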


The main result of this paper is to answer Question (b) for generic $\tau\in\mathbb H$.

\begin{theorem}\label{main-thm-1} Define
\[\mathcal O_n:=\{\tau\in \mathbb H\;:\; \text{$G_n(\boldsymbol{z};\tau)$ has degenerate critical points}\}\subset \mathbb H\setminus i\mathbb {R}_{>0}.\]
Then $\mathcal O_n$ is of Lebesgue measure zero. In other words, for almost all $\tau$, any critical point of $G_n(\boldsymbol{z};\tau)$ is non-degenerate.
\end{theorem}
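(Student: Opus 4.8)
The plan is to realize $\mathcal{O}_n$ as the projection to $\mathbb{H}$ of the zero set of a real-analytic function in suitable coordinates, and then invoke a standard fact: a real-analytic function on a connected domain is either identically zero or its zero set has measure zero. The key point is that $G(z;\tau)$, although not holomorphic in $z$, can be written via the Weierstrass functions in a way that makes $\nabla G$ and its derivatives real-analytic in $(z_1,\dots,z_n,\tau)$ jointly. Concretely, recall the classical identity $-4\pi\,\partial_z G(z;\tau)=\zeta(z;\tau)-\eta_1(\tau)z-\frac{\pi}{\operatorname{Im}\tau}(\bar z - z\,\overline{\tau}/\cdots)$-type formula; more precisely, writing $z=s+t\tau$ with $(s,t)\in\mathbb{R}^2$, one has the well-known expression $-4\pi\partial_z G = \zeta(z;\tau)-\eta_1 z - 2\pi i\, t$ (with $\eta_1=\zeta(1/2;\tau)\cdot 2$ in the usual normalization), which is real-analytic in $(s,t,\tau)$ wherever it is defined. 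Thus the critical point system \eqref{poho} becomes a real-analytic system $F(s_1,t_1,\dots,s_n,t_n;\tau)=0$ in $\mathbb{R}^{2n}\times\mathbb{H}$, and the Hessian $\det D^2 G_n$ is likewise a real-analytic function $H(s_1,t_1,\dots;\tau)$ on the same domain minus the diagonal.

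The second step is to build a single real-analytic function whose non-vanishing at $\tau$ certifies $\tau\notin\mathcal{O}_n$. The natural idea is to consider the real-analytic set $\mathcal{Z}=\{(\boldsymbol{z},\tau): F(\boldsymbol{z};\tau)=0,\ H(\boldsymbol{z};\tau)=0\}$ inside $\big((E^\times_\tau)^n\setminus\Delta_n\big)\times\mathbb{H}$, which is closed and semi-analytic. Then $\mathcal{O}_n$ is exactly the image of $\mathcal{Z}$ under the proper projection $\pi:\mathcal{Z}\to\mathbb{H}$ (properness because the $\boldsymbol z$-fibre lives in a compact torus and degenerate critical points cannot escape to the diagonal $\Delta_n$ or to $0$ — this needs a separate a priori estimate, e.g. $|G|\to\infty$ near $\Delta_n$ and near $0$ prevents critical points there). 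By the curve selection / Tarski–Seidenberg-type theory for subanalytic sets (or, more elementarily, by stratifying $\mathcal{Z}$ into finitely many real-analytic manifolds and applying Sard plus dimension count), $\mathcal{O}_n$ is a subanalytic subset of $\mathbb{H}$; it therefore has measure zero unless it contains an open set, i.e. unless it equals (a component of) $\mathbb{H}$.

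The third step — and this is where Theorem \ref{Thm-C} is crucial — is to rule out that $\mathcal{O}_n$ contains an open set, equivalently to show $\mathcal{O}_n\ne\mathbb{H}$. Since $\mathcal{O}_n\subset\mathbb{H}\setminus i\mathbb{R}_{>0}$ already (every critical point over a rectangular torus is non-degenerate by Theorem \ref{Thm-C}), the imaginary axis $i\mathbb{R}_{>0}$ witnesses this. To push this from "not everything" to "measure zero", I would argue componentwise on the real-analytic strata: on each stratum $S$ of $\mathcal{Z}$ of dimension $d$, the projection $\pi|_S$ has rank at most $2$ generically, and its image has measure zero in $\mathbb{H}$ as soon as $\pi|_S$ is nowhere submersive onto a $2$-dimensional piece; if some stratum $S$ did project onto an open set, then by real-analyticity $H$ would vanish on a relatively open subset of the critical-point variety lying over that open set, contradicting non-degeneracy at nearby $\tau\in i\mathbb{R}_{>0}$ via an analytic continuation argument (one connects an open piece of $\mathcal{E}_n$-adjacent $\tau$ to the rectangular locus along a path in $\mathbb H$ and uses that the number of critical points is locally constant where the Hessian is nonzero, so a degenerate family cannot merge into the rectangular picture).

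The main obstacle I anticipate is the properness/compactness input in Step 2: one must guarantee that as $\tau$ varies in a compact set, the critical points $\boldsymbol{a}(\tau)$ satisfying the degenerate system stay in a fixed compact subset of $(E^\times_\tau)^n\setminus\Delta_n$, so that $\pi$ is a proper map of real-analytic (sub)sets and its image is genuinely subanalytic rather than merely analytic-constructible. This requires quantitative control: near $\Delta_n$ one has $G(z_j-z_k)\to+\infty$ and near $z_j=0$ one has $-nG(z_j)\to-\infty$, and one must check the gradient cannot vanish there — this is standard but must be done carefully, uniformly in $\tau$ on compacta, perhaps by rescaling around the singularity. A secondary technical point is choosing coordinates on the moving torus $E_\tau$ coherently in $\tau$ (the $(s,t)$ coordinates do this), and verifying that "degenerate critical point of $G_n(\cdot;\tau)$ on $E_\tau$" translates without loss to "degenerate critical point of the pulled-back function on the fixed domain $(\mathbb{R}^2)^n$" — the change of variables is linear in $\tau$, so the Hessian determinant only picks up a nonzero $\tau$-dependent Jacobian factor and the degeneracy locus is unchanged.
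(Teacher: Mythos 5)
Your overall strategy (realize $\mathcal{O}_n$ as the projection of the real-analytic set $\{F=0,\ H=0\}$ and show the image cannot contain an open set) is genuinely different from the paper's, but it has a gap exactly at the step that carries the paper's novelty. Your mechanism for excluding an open image is an analytic-continuation argument back to the rectangular locus $i\mathbb{R}_{>0}$, where Theorem \ref{Thm-C} gives non-degeneracy. This cannot work for \emph{nontrivial} critical points: they exist only for $\tau\in\mathcal{E}_n$, and $\overline{\mathcal{E}_n}$ is disjoint from $i\mathbb{R}_{>0}$ (indeed $\overline{\mathcal{E}_n}\subsetneqq\mathbb{H}\setminus i\mathbb{R}_{>0}$), so no component of the nontrivial critical-point variety can be continued to lie over rectangular tori, no path "merges a degenerate family into the rectangular picture", and Theorem \ref{Thm-C} imposes no constraint whatsoever on it. Nothing in your proposal then rules out a two-parameter family of degenerate nontrivial critical points whose $\tau$-projection fills an open subset of $\mathcal{E}_n$; note also that $H$ vanishing on a stratum of $\mathcal{Z}$ is true by definition of $\mathcal{Z}$, so no contradiction can be extracted from that alone. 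The paper closes precisely this gap by a quantitative input: the Hessian formula of Theorem \ref{DD-conj0} (built on the universal law $d\tau\wedge dB=8\pi^2\,dr\wedge ds$ of Theorem \ref{thm-rsbtau}), which shows that a nontrivial critical point with data $(r,s)$ is degenerate if and only if the real Jacobian of the analytic map $(r,s)\mapsto\tau$ vanishes; combined with the result of \cite{Lin-JDG25} that $\mathcal{E}_n\cap F_0$ is covered by finitely many analytic maps $\tau_j:\Omega_j\to F_0$ with $Z^{(n)}_{r,s}(\tau_j(r,s))\equiv 0$, Sard's theorem applied to these maps gives measure zero directly, with no need for an empty-interior dichotomy.

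Two secondary points. First, your properness step (critical points staying away from $\Delta_n$ and $0$ uniformly for $\tau$ in compacta) is flagged but not proved, and the naive "$|G|\to\infty$" heuristic does not preclude cancellations when several points collide simultaneously; the paper avoids this issue entirely by working through the hyperelliptic curve $Y_n$ and the monodromy data rather than through a compactness argument in $(\boldsymbol z,\tau)$-space. Second, be aware that the degenerate-\emph{trivial} locus is already known (\cite{Lin-JDG25},\cite{Eremenko-GT}) to contain countably many Lin--Wang curves, so $\mathcal{O}_n$ is not the zero set of a single nontrivial global analytic function on $\mathbb{H}$; the paper handles the trivial part by quoting that structure (finitely many curves per fundamental domain of $\Gamma_0(2)$, plus the finitely many $\tau$ where the discriminant of $\ell_n$ vanishes) and then uses the $\Gamma_0(2)$-invariance of Remark \ref{Rmk4-1} to pass from $F_0$ to $\mathbb{H}$, a reduction your proposal would also need to make or else argue uniformly over all of $\mathbb{H}$.
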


Theorem \ref{main-thm-1} is sharp in the sense that it was already proved in \cite{Lin-JDG25} that $\mathcal O_n$ contains at least countably many analytic curves. These curves were called Lin-Wang curves by \cite{Eremenko-GT}, along which $G_{n}(\boldsymbol z; \tau)$ has degenerate trivial critical points. This result will be recalled in the proof of Theorem \ref{main-thm-1} in Section 4.  The novelty of this paper is to prove that the set of those $\tau$'s, for which $G_{n}(\boldsymbol z; \tau)$ has degenerate nontrivial critical points, is of Lebesgue measure zero.

 It is well known that the non-degeneracy of critical points has important applications in constucting solutions via the reduction approach
and proving the local uniqueness of bubbling solutions for elliptic PDEs; see e.g. \cite{BKLY,CL-JDG21,LY} and references therein for many examples, where the non-degeneracy of critical points often appreared as assumptions.  We believe that Theorem \ref{main-thm-1} has important applications in future.
Here Theorem \ref{main-thm-1} can be applied to count the exact number of solutions for the curvature equation \eqref{mfe}, as stated in the next result.
\begin{theorem}
\label{numbers1} Let $n\in \mathbb{N}^*$ and recall \eqref{eq-en} that the curvature equation \eqref{mfe} with $\rho=8\pi n$ has no solutions for each $\tau\in \mathbb{H}\setminus \mathcal{E}_n$. Then for each $\tau\in \mathbb{H}\setminus (\mathcal{E}_n\cup\mathcal{O}_n)$, there exists 
small $\varepsilon_{\tau}>0$ such that

\begin{itemize}
\item[(1)] for $\rho \in(8\pi n, 8\pi n+\varepsilon_{\tau})$, \eqref{mfe} has
exactly $n+1$ solutions;

\item[(2)] for $\rho \in(8\pi n-\varepsilon_{\tau}, 8\pi n)$, \eqref{mfe} has
exactly $n$ solutions.
\end{itemize}
\end{theorem}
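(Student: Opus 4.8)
The proof of Theorem \ref{numbers1} proceeds by combining the degree-counting machinery for \eqref{mfe} with the non-degeneracy granted by Theorem \ref{main-thm-1}. Fix $\tau\in\mathbb H\setminus(\mathcal E_n\cup\mathcal O_n)$. Since $\tau\notin\mathcal O_n$, every critical point of $G_n(\cdot;\tau)$ is non-degenerate; since $\tau\notin\mathcal E_n$, $G_n$ has no nontrivial critical points, so by Theorem \ref{Thm-B} (and the fact that no nontrivial ones can appear) $G_n$ has exactly the trivial critical points, whose number is at most $2n+1$. The plan is first to pin down that this number is \emph{exactly} $2n+1$ for our $\tau$: indeed, by the correspondence in \cite{CLW} between trivial critical points and branch points of the hyperelliptic curve $Y_n$, the count drops below $2n+1$ precisely when two branch points collide, which is an algebraic (in fact analytic) condition on $\tau$; one checks this degeneration forces a degenerate trivial critical point of $G_n$, hence is excluded by $\tau\notin\mathcal O_n$. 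So $G_n(\cdot;\tau)$ has exactly $2n+1$ critical points, all trivial and all non-degenerate --- exactly the situation of Theorem \ref{Thm-C}, but now for this open set of $\tau$ rather than only $\tau\in i\mathbb R_{>0}$.

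Next I would transport this information to the bubbling picture. By Theorem \ref{Thm-A}(1), when $\rho_k\to 8\pi n$ with $\rho_k\neq 8\pi n$, every bubbling sequence concentrates at a trivial critical point of $G_n$; and since $\tau\notin\mathcal E_n$, \eqref{mfe-n} itself has no solution, so no bubbling can occur \emph{at} $\rho=8\pi n$ other than through the blow-up limit. The key analytic input is the local uniqueness and exact multiplicity of bubbling solutions near a non-degenerate trivial critical point: for each such critical point $\boldsymbol a$ there is a neighborhood in which, for $\rho$ close to $8\pi n$, the number of solutions of \eqref{mfe} blowing up at $\boldsymbol a$ is governed by the sign of a reduced finite-dimensional functional whose leading term is controlled by $\det D^2G_n(\boldsymbol a)$ together with the spectral/topological data of the singular Liouville equation; this is precisely the reduction argument used in \cite{CL-JDG21} for the rectangular case. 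The outcome there is: a non-degenerate trivial critical point contributes exactly one solution for $\rho$ slightly above $8\pi n$ and either zero or one for $\rho$ slightly below, with the bookkeeping determined by a local degree $\pm 1$. Summing over the $2n+1$ trivial critical points and matching with the global Leray--Schauder degree of \eqref{mfe} (which is known to be $n+1$ on the relevant range, cf. \cite{CLW,CL-JDG21}) forces the split: $n+1$ critical points each yield one solution surviving to $\rho>8\pi n$, while $n$ of them yield solutions only on the side $\rho<8\pi n$, giving exactly $n+1$ solutions for $\rho\in(8\pi n,8\pi n+\varepsilon_\tau)$ and exactly $n$ for $\rho\in(8\pi n-\varepsilon_\tau,8\pi n)$.

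Finally, to see that these are \emph{all} the solutions (not merely all the bubbling ones), one uses the a priori estimates for \eqref{mfe}: for $\rho$ in a small punctured neighborhood of $8\pi n$ and bounded away from $8\pi(n\pm 1)$, any sequence of solutions with $\rho_k\to 8\pi n$ either stays bounded in $L^\infty$ --- whence, by elliptic estimates and the fact that \eqref{mfe-n} has no solution, it would converge to a solution of \eqref{mfe-n}, a contradiction --- or blows up, in which case it falls into the local pictures classified above. Thus for $\varepsilon_\tau$ small every solution of \eqref{mfe} with $|\rho-8\pi n|<\varepsilon_\tau$ is one of those produced by the reduction near a trivial critical point, and the count is complete.

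I expect the main obstacle to be the local exact-multiplicity step near each non-degenerate trivial critical point: one must show that $\det D^2G_n(\boldsymbol a)\neq 0$ really does make the Lyapunov--Schmidt reduced equation non-degenerate (the linearized singular Liouville operator has a kernel tied to the translation and scaling modes, and one needs these to interact cleanly with $D^2G_n$), and one must correctly identify the sign that decides whether the contribution on the lower side is $0$ or $1$. In \cite{CL-JDG21} this was carried out for $\tau\in i\mathbb R_{>0}$, where additional symmetry simplifies the signs; here the argument must be made robust to the loss of that symmetry, relying only on non-degeneracy and the global degree count. Matching the local indices against the global degree $n+1$ is the clean way to avoid computing each sign individually, and that is the route I would take.
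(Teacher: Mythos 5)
Your proposal is correct and follows essentially the same route as the paper: both first show that for $\tau\in\mathbb H\setminus(\mathcal E_n\cup\mathcal O_n)$ all critical points of $G_n(\cdot;\tau)$ are trivial, non-degenerate and exactly $2n+1$ in number, so that the conclusion of Theorem \ref{Thm-C} extends to such $\tau$, and then conclude by the counting argument of \cite[Theorem 1.3]{CL-JDG21}. The only cosmetic difference is that the paper excludes having fewer than $2n+1$ trivial critical points by perturbing from nearby tori $\tau_k\to\tau$ with exactly $2n+1$ of them (a merger in the limit would force a degenerate critical point, contradicting $\tau\notin\mathcal O_n$), whereas you phrase the same exclusion via collision of branch points of $Y_n$ (multiple zeros of $\ell_n$), and you spell out the Lyapunov--Schmidt/degree machinery of \cite{CL-JDG21} that the paper simply cites.
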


\begin{proof}
Note that $i\mathbb{R}_{>0}\subsetneqq \mathbb{H}\setminus (\mathcal{E}_n\cup\mathcal{O}_n)$. 
Theorem \ref{numbers1} is a generalization of \cite[Theorem 1.3]{CL-JDG21}, where the same statements (1)-(2) were proved for $\tau\in i\mathbb{R}_{>0}$ because of the validity of Theorem \ref{Thm-C}. 

Now for each $\tau\in \mathbb{H}\setminus (\mathcal{E}_n\cup\mathcal{O}_n)$, we have that:
\begin{itemize}
\item[(a)] The definitions of $\mathcal{E}_n$ and $\mathcal{O}_n$ imply that all critical points of $G_n(\boldsymbol{z};\tau)$ are trivial and non-degenerate.
\item[(b)] Suppose the number of trivial critical points of $G_n(\boldsymbol z;\tau)$ is less than $2n+1$. Since Theorem \ref{Thm-B} implies the existence of $\tau_k\to \tau$ such that $G_n(\boldsymbol z;\tau_k)$ has exactly $2n+1$ trivial critical points, it follows that some trivial critical point of $G_n(\boldsymbol z;\tau)$ must be degenerate, i.e. $\tau\in\mathcal{O}_n$, a contradiction. Therefore, $G_n(\boldsymbol z;\tau)$ has exactly $2n+1$ trivial critical points.
\end{itemize}
Therefore, the statements of Theorem \ref{Thm-C} also hold for $\tau\in \mathbb{H}\setminus (\mathcal{E}_n\cup\mathcal{O}_n)$. Consequently, the same proof as \cite[Theorem 1.3]{CL-JDG21} implies Theorem \ref{numbers1}.
\end{proof}

In the proof of Theorem \ref{main-thm-1}, our new idea is to prove a precise formula of
 the Hessian $\det D^2G_n(\boldsymbol{a})$ for a nontrivial critical point $\boldsymbol{a}$, which gives a criterion for the degeneracy of nontrivial critical points.

\begin{theorem} \label{DD-conj0}Fix any $n \in \Bbb N^*$ and $\tau$. Suppose $\boldsymbol{p} = \{p_1, \cdots, p_n\}$ is a nontrivial critical point of $G_n$, then there is a constant $c_{\boldsymbol{p}} >0 $ such that
\begin{equation}\label{eq-ex-non0}
\det D^2 G_n(\boldsymbol{p};\tau) = \frac{(-1)^n n^2}{4(2\pi)^{2n+2} \operatorname{Im}\tau} c_{\boldsymbol{p}} |\tau_r(r_{\boldsymbol{p}}, s_{\boldsymbol{p}})|^2\operatorname{Im}
\left(\frac{\tau_s}{\tau_r}(r_{\boldsymbol{p}}, s_{\boldsymbol{p}})\right).
\end{equation}
Consequently, $\boldsymbol{p}$ is degenerate if and only if
$|\tau_r(r_{\boldsymbol{p}}, s_{\boldsymbol{p}})|^2\operatorname{Im}
(\frac{\tau_s}{\tau_r}(r_{\boldsymbol{p}}, s_{\boldsymbol{p}}))=0$, namely
\[\frac{\tau_s}{\tau_r}(r_{\boldsymbol{p}}, s_{\boldsymbol{p}})\in\mathbb{R}\cup\{\infty\}.\]
Here $(r_{\boldsymbol{p}}, s_{\boldsymbol{p}})\in\mathbb{R}^2\setminus\frac12\mathbb{Z}^2$ is determined by $\sum_{j=1}^np_j=r_{\boldsymbol{p}}+s_{\boldsymbol{p}}\tau$, and $\tau(r,s)$ is some holomorphic function satisfying $\tau(r_{\boldsymbol{p}}, s_{\boldsymbol{p}})=\tau$ that will be introduced in Remark \ref{Rmk2-4}, with $\tau_r:=\frac{\partial \tau}{\partial r}$, $\tau_s:=\frac{\partial \tau}{\partial s}$ and $(\tau_r, \tau_s)\neq (0,0)$. 
\end{theorem}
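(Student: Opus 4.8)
The plan is to connect the Green function's critical point equations on the torus with the classical theory of the Lamé equation and the associated hyperelliptic curve $Y_n$, following the algebro-geometric framework of \cite{CLW} and \cite{CL-JDG21}. The starting observation is that a critical point $\boldsymbol{p}$ of $G_n$ corresponds to a point on $Y_n$, and the sum $\sum_j p_j$ (hence the data $(r_{\boldsymbol p}, s_{\boldsymbol p})$ with $\sum_j p_j = r_{\boldsymbol p} + s_{\boldsymbol p}\tau$) is exactly the image of this point under the Abel map, up to an additive constant coming from the canonical divisor. The key is that the monodromy of the Lamé equation (or equivalently, the Hermite--Halphen ansatz solutions) produces a natural holomorphic map $(r,s)\mapsto \tau(r,s)$ whose graph parametrizes those $\tau$ for which $\boldsymbol{p}$ is a critical point with prescribed $\sum p_j$; this is the function $\tau(r,s)$ referenced in the statement and introduced in Remark \ref{Rmk2-4}. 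First I would set up coordinates: write each $z_j = x_j + y_j\tau$ and use the known formula $\nabla G(z;\tau) = -\frac{1}{2\pi}(\zeta(z) - \eta_1 z)$ (suitably real-linearized) to express the critical point system, then differentiate once more to obtain $D^2 G_n(\boldsymbol p)$ as a $2n\times 2n$ real matrix built from $\wp$, $\wp'$ and the quasi-periods.

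Next I would carry out the central computation: expressing $\det D^2 G_n(\boldsymbol p)$ in terms of the derivatives of $\tau(r,s)$. The strategy is to view the critical point equations \eqref{poho} together with the ansatz from the Lamé curve as defining an analytic family of critical points parametrized near $\boldsymbol p$. Implicitly differentiating the critical point equations along this family gives a relation between $D^2 G_n(\boldsymbol p)$ and the Jacobian of the map sending $\boldsymbol p$ (modulo $S_n$ and translations) to $(r,s,\tau)$. Concretely, one shows that the $2n\times 2n$ Hessian factors, after a suitable (invertible, hence contributing only the positive constant $c_{\boldsymbol p}$ and the sign $(-1)^n$) change of variables, into a $2\times 2$ block governed by $\operatorname{Re}\tau_r, \operatorname{Im}\tau_r, \operatorname{Re}\tau_s, \operatorname{Im}\tau_s$ and a complementary block that is non-singular. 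The determinant of a $2\times2$ real matrix with columns $(\operatorname{Re}\tau_r,\operatorname{Im}\tau_r)$ and $(\operatorname{Re}\tau_s,\operatorname{Im}\tau_s)$ is precisely $\operatorname{Im}(\overline{\tau_r}\,\tau_s) = |\tau_r|^2 \operatorname{Im}(\tau_s/\tau_r)$ when $\tau_r\neq 0$, which is how the claimed expression arises; the factor $\frac{1}{\operatorname{Im}\tau}$ and the powers of $2\pi$ come from normalizations in $\nabla G$ and the area $|E_\tau| = \operatorname{Im}\tau$. The case $\tau_r = 0$ (but $\tau_s\neq 0$) is handled by the symmetric argument with roles swapped, and in that case $\tau_s/\tau_r = \infty$, consistent with the degeneracy criterion.

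The positivity of $c_{\boldsymbol p}$ deserves separate attention: it should come from recognizing the complementary block as a Gram-type matrix — e.g., something like $|W(\boldsymbol p)|^2$ times a product of squared norms of the holomorphic differentials evaluated at the branch data, or a Wronskian-squared of Lamé solutions — which is manifestly positive because $\boldsymbol p$ is nontrivial (so the relevant quantities do not vanish: nontriviality guarantees the Lamé equation is not in the ``finite-gap degenerate'' locus where the ansatz breaks down). I would isolate this by tracking which quantities in the factorization are real versus complex and invoking that for nontrivial $\boldsymbol p$ the ansatz solutions are linearly independent. The degeneracy equivalence then follows immediately: since $c_{\boldsymbol p}>0$ and the prefactor $\frac{(-1)^n n^2}{4(2\pi)^{2n+2}\operatorname{Im}\tau}$ is nonzero, $\det D^2 G_n(\boldsymbol p) = 0$ iff $|\tau_r|^2\operatorname{Im}(\tau_s/\tau_r) = 0$, i.e. iff $\tau_s/\tau_r \in \mathbb{R}\cup\{\infty\}$, using that $(\tau_r,\tau_s)\neq(0,0)$.

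The main obstacle I anticipate is the explicit factorization of the $2n\times2n$ Hessian and, in particular, the clean identification of the complementary block together with the proof of its non-vanishing and the sign $(-1)^n$. Getting the Hessian into block-triangular form requires choosing the right linear change of coordinates on $(E_\tau)^n$ — presumably one that separates the ``center of mass'' direction $\sum z_j$ (which carries the $(r,s)$ dependence) from the ``internal'' directions — and then showing the internal block is governed by the second variation of the Lamé-type spectral problem, which is where the connection to \cite{CLW, CL-JDG21} must be exploited most carefully. A secondary technical point is ensuring the holomorphic function $\tau(r,s)$ is well-defined and that $(\tau_r,\tau_s)\neq(0,0)$ (non-constancy along the curve), which I expect to follow from the analysis of the Lin--Wang curves already present in \cite{Lin-JDG25} and will need to be quoted precisely.
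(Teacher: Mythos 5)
Your overall skeleton (reduce to the Lam\'e curve $Y_n$, split off a $2\times 2$ block whose determinant is $\operatorname{Im}(\overline{\tau_r}\,\tau_s)=|\tau_r|^2\operatorname{Im}(\tau_s/\tau_r)$, and argue the complementary factor is positive) matches the shape of the paper's argument, but the proposal is missing the one ingredient that makes a statement in terms of $\tau_r,\tau_s$ reachable at all. In the Hessian computation $\tau$ is \emph{fixed}: implicitly differentiating the critical-point system along $Y_n(\tau)$ (parametrized by the accessory parameter $B$) can only produce the fixed-$\tau$ quantities $d_0=\sum_j a_j'(B)$ and $c_0=-\sum_j\wp(a_j)a_j'(B)$ of \eqref{r1s2}, equivalently the derivatives $r_B,s_B$ of the monodromy data; no such computation can, by itself, know how $\tau$ varies as a function of $(r,s)$. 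The paper converts $(r_B,s_B)$ into $(\tau_s,\tau_r)$ only through the universal law $d\tau\wedge dB\equiv 8\pi^2\,dr\wedge ds$ of Theorem \ref{thm-rsbtau}, whose consequence \eqref{eq2-16}, namely $\tau_r=8\pi^2 s_B$ and $\tau_s=-8\pi^2 r_B$, is exactly what turns the Jacobian of $\phi\circ\boldsymbol{a}$ with respect to $B$ into $-\frac{1}{4\pi^2\operatorname{Im}\tau}|\tau_r|^2\operatorname{Im}(\tau_s/\tau_r)$ as in Proposition \ref{Jacobian}. Your proposal asserts that a $2\times2$ block ``governed by $\operatorname{Re}\tau_r,\operatorname{Im}\tau_r,\operatorname{Re}\tau_s,\operatorname{Im}\tau_s$'' appears, but gives no mechanism by which derivatives of $\tau(r,s)$ enter a fixed-$\tau$ computation; without \eqref{brstau} (itself a nontrivial recent theorem, not a routine normalization) the argument stalls at a formula in $c_0,d_0$ and never reaches \eqref{eq-ex-non0}.

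The factorization and the positivity of $c_{\boldsymbol{p}}$ are also not carried out by the means you suggest. The paper does not use a linear ``center of mass versus internal directions'' change of coordinates on $(E_\tau)^n$; it replaces $\nabla G_n$ by $g=(g^1,\dots,g^{n-1},\tfrac12\phi)$ through an explicit constant matrix $A$ (this is where the factor $(-1)^{n-1}(2\pi)^{2n}/n^2$ in \eqref{J(g)} comes from), takes $B$ as a local holomorphic coordinate on $Y_n(\tau)$ near $\boldsymbol{p}$ --- legitimate precisely because $\boldsymbol{p}$ is nontrivial, so it is not a branch point and, differentiating \eqref{b-a} and using \eqref{eq-ex-aj}, some $a_k'(B_{\boldsymbol{p}})\neq0$ --- and block-triangularizes $Dg$ using $\partial g^j/\partial B=0$ along $Y_n$, i.e. \eqref{der-level}. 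A generic invertible change of variables does \emph{not} contribute ``only a positive constant,'' and the positivity of $c_{\boldsymbol{p}}$ is not manifest: one has $\det D'g=|\det D'^{\Bbb C}g|^2\ge0$ by holomorphy, and strict positivity requires proving that the particular $(n-1)\times(n-1)$ minor is nonzero, which the paper establishes by the cofactor/kernel argument (the kernel of $D^{\Bbb C}\tilde g(\boldsymbol{p})$ is one-dimensional, spanned by $(a_1'(B_{\boldsymbol{p}}),\dots,a_n'(B_{\boldsymbol{p}}))$). Your Gram-matrix/Wronskian heuristic skips exactly this step. So the proposal has the right framework but omits the two decisive ingredients --- the identity \eqref{eq2-16} coming from the universal law, and the $B$-coordinate block-triangularization together with the kernel argument for $c_{\boldsymbol{p}}>0$ --- and as written it would not go through.
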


Remark that the $(r_{\boldsymbol{p}}, s_{\boldsymbol{p}})$ also appears as the monodromy data of the classical Lam\'{e} equation (a second order Fuchsian ODE); see Section 2 for a brief overview.

The rest of this paper is organized as follows. In Section 2, we will introduce the deep connection between critical points of $G_n$ and the monodromy theory of the Lam\'{e} equation. In Section 3, we will prove Theorem \ref{DD-conj0}. In Section 4, we will prove Theorem \ref{main-thm-1}.

\section{Connection with the Lam\'{e} equation}

In this section, we
briefly review the connection between critical points of $G_{n}$ and
the classical Lam\'{e} equation from \cite{CLW}.

Let $\wp(z)=\wp( z;\tau)$ be the Weierstrass elliptic function with periods
$\Lambda_{\tau}$, defined by%
\begin{equation}
\label{40-3}\wp(z;\tau):=\frac{1}{z^{2}}+\sum_{\omega \in \Lambda_{\tau
}\backslash\{0\}  }\left(  \frac{1}{(z-\omega)^{2}}-\frac
{1}{\omega^{2}}\right) ,
\end{equation}
which satisfies the well-known cubic equation
\begin{equation}
\label{40-4}\wp^{\prime}(z;\tau)^{2}=4\wp(z;\tau)^{3}-g_{2}(\tau)\wp
(z;\tau)-g_{3}(\tau),
\end{equation}
where $g_2, g_3$ are known as invariants of the elliptic curve $E_{\tau}$.

Let $\zeta(z)=\zeta(z;\tau):=-\int^{z}\wp(\xi;\tau)d\xi$ be the Weierstrass
zeta function with two quasi-periods
\begin{equation}
\eta_j=\eta_{j}(\tau):=2\zeta(\tfrac{\omega_{j}}{2};\tau)=\zeta(z+\omega_{j}%
;\tau)-\zeta(z;\tau),\quad j=1,2,\label{40-2}%
\end{equation}
and $\sigma(z)=\sigma(z;\tau):=\exp(\int^{z}\zeta(\xi;\tau)d\xi)$ be the Weierstrass sigma function. Notice that $\zeta(z)$ is an odd meromorphic function with simple poles at $\Lambda_{\tau}$, and $\sigma(z)$ is an odd
entire function with simple zeros at $\Lambda_{\tau}$, which satisfies the following transformation law
\begin{equation}\label{40-3}
\sigma(z+\omega_{j})=-e^{\eta_{j}(z+\omega_{j}/2)}\sigma(z),\text{ \ }j=1,2.
\end{equation}

\subsection{The Lam\'{e} equation}

Consider the well-known integral Lam\'{e} equation
\begin{equation}\tag{$\mathcal{L}_{n,B,\tau}$}
y^{\prime \prime}(z)=[n(n+1)\wp
(z;\tau)+B]y(z),\quad z\in\mathbb{C},
\end{equation}
where $n\in \mathbb{N}^*$ and $B\in \mathbb{C}$ are called index
and accessory parameter respectively. 
See e.g. the classic text \cite{Whittaker-Watson} and recent works \cite{CLW,Dahmen0,Dahmen,LW2,Maier} for introductions about $\mathcal{L}_{n,B,\tau}$.
In particular, thanks to $n\in\mathbb{N}^*$, it is easy to see (cf. \cite{CLW,Dahmen0,Whittaker-Watson}) that any solution of $\mathcal{L}_{n,B,\tau}$ is single-valued and meromorphic in $\mathbb{C}$.

For $\boldsymbol{a}=\{a_{1},\cdot \cdot \cdot,a_{n}\}\in E_{\tau}^n/S_n$, where $S_n$ denotes the permutation group, we consider the
\textit{Hermite-Halphen ansatz} (cf. \cite{CLW,Whittaker-Watson}):
\begin{equation}
y_{\boldsymbol{a}}(z):=e^{z\sum_{j=1}^{n}\zeta(a_{j})}\frac{\prod_{j=1}%
^{n}\sigma(z-a_{j})}{\sigma(z)^{n}}.\label{ff2-1}%
\end{equation}
Following also \cite{CLW}, we define
\begin{equation}
\label{hyper}Y_{n}=Y_n(\tau):=\left \{  \boldsymbol{a}\in E_{\tau}^n/S_n\left \vert
\begin{array}
[c]{l}%
a_{j}\neq0,\text{ }a_{j}\neq a_{k}\text{ in $E_{\tau}$ for any }j\not =k,\\
\sum_{k\not =j}^{n}(\zeta(a_{j}-a_{k})+\zeta(a_{k})-\zeta(a_{j}))=0,\text{
}\forall j
\end{array}
\right.  \right \}  .
\end{equation}
Clearly $-\boldsymbol{a}:=\{-a_{1},\cdot \cdot \cdot,-a_{n}\} \in Y_{n}$ if
$\boldsymbol{a}\in Y_{n}$. The connection between the Lam\'{e} equation and $Y_n$ 
has been studied in the literature.

\begin{theorem}
(cf. \cite{CLW,Whittaker-Watson}) \label{CLWWW}
\begin{itemize}
\item[(1)]
For any $B\in\mathbb C$, there exists a unique pair $\pm\boldsymbol{a}\in Y_n(\tau)$ such that $y_{\boldsymbol{a}}(z)$ and $y_{-\boldsymbol{a}}(z)$ are both solutions of the Lam\'{e} equation $\mathcal{L}_{n,B,\tau}$, and
\begin{equation}
\label{b-a}B=B_{\boldsymbol{a}} :=(2n-1)\sum_{j=1}^{n}\wp(a_{j};\tau).
\end{equation}
\item[(2)] There is a so-called Lam\'e polynomial $\ell_{n}(B)=\ell_{n}(B;\tau
)\in \mathbb{Q}[g_{2}(\tau),g_{3}(\tau)][B]$ of degree $2n+1$ such that $y_{\boldsymbol{a}}(z)$ and $y_{-\boldsymbol{a}}(z)$ are linearly independent if and only if $\ell_n(B;\tau)\neq 0$. In this case, $\boldsymbol{a}\neq -\boldsymbol{a}$ and indeed,
\begin{equation}
\{a_{1},\cdot \cdot \cdot,a_{n}\} \cap \{-a_{1},\cdot \cdot \cdot,-a_{n}%
\}=\emptyset\quad\text{in }\;E_{\tau}.\label{fc6}%
\end{equation}
Besides, we define $(r,s)\in\mathbb{C}^2$ by
\begin{equation}\label{de-rs}\left \{
\begin{array}
[c]{l}%
r+s\tau=\sum_{j=1}^{n}a_j\\
r\eta_{1}+s\eta_{2}=\sum_{j=1}^{n}\zeta(a_{j}),
\end{array}
\right.\end{equation}
which can be uniquely solved by the Legendre relation $\tau\eta_1-\eta_2=2\pi i$.
Then $(r,s)\notin\frac12\mathbb{Z}^2:=\{(r,s) \;:\; 2r,2s\in\mathbb Z\}$ and
\begin{equation}\label{mono1}\begin{pmatrix}y_{\boldsymbol{a}}(z+\omega_1)\\ y_{-\boldsymbol{a}}(z+\omega_1)\end{pmatrix}=\begin{pmatrix}
e^{-2\pi is} & 0\\
0 & e^{2\pi is}%
\end{pmatrix}\begin{pmatrix}y_{\boldsymbol{a}}(z)\\ y_{-\boldsymbol{a}}(z)\end{pmatrix},\end{equation}
\begin{equation}\label{mono2}\begin{pmatrix}y_{\boldsymbol{a}}(z+\omega_2)\\ y_{-\boldsymbol{a}}(z+\omega_2)\end{pmatrix}=\begin{pmatrix}
e^{2\pi ir} & 0\\
0 & e^{-2\pi ir}%
\end{pmatrix}\begin{pmatrix}y_{\boldsymbol{a}}(z)\\ y_{-\boldsymbol{a}}(z)\end{pmatrix}.\end{equation}
\item[(3)] The
map $B:Y_{n}(\tau)\rightarrow \mathbb{C}$ defined by \eqref{b-a} is a ramified
covering of degree $2$, and 
\[
Y_{n}(\tau)\cong \left \{  (B,C)\text{ }|\text{ }C^{2}=\ell_{n}(B;\tau)\right \}
\]
 is a
hyperelliptic curve with $\boldsymbol{a}\in Y_{n}(\tau)$ being a branch
point if $\boldsymbol{a}=-\boldsymbol{a}$ in $E_{\tau}$.
\end{itemize}
\end{theorem}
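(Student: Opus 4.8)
The plan is to treat the Hermite--Halphen ansatz \eqref{ff2-1} as the single bridge from which all three parts follow, the key being to pin down exactly when $y_{\boldsymbol a}$ solves $\mathcal{L}_{n,B,\tau}$. First I would compute the logarithmic derivative
\[
\frac{y_{\boldsymbol a}'(z)}{y_{\boldsymbol a}(z)}=\sum_{j=1}^n\zeta(a_j)+\sum_{j=1}^n\zeta(z-a_j)-n\zeta(z),
\]
and use $y_{\boldsymbol a}''/y_{\boldsymbol a}=(y_{\boldsymbol a}'/y_{\boldsymbol a})'+(y_{\boldsymbol a}'/y_{\boldsymbol a})^2$ together with $\zeta'=-\wp$ to expand this meromorphic function. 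At each $z=a_j$ the double poles produced by $(y'/y)^2$ and by $(y'/y)'$ cancel automatically, and the residue of the remaining simple pole equals exactly twice the left-hand side of the defining relation in \eqref{hyper}; hence these simple poles vanish precisely when $\boldsymbol a\in Y_n$. At $z=0$ the parity of $\wp$ and $\zeta$ forces the leading part $n(n+1)/z^2$ with vanishing residue and constant term $(2n-1)\sum_j\wp(a_j)$, so that $y_{\boldsymbol a}''/y_{\boldsymbol a}-n(n+1)\wp(z)$ is a pole-free elliptic function, i.e. the constant $B_{\boldsymbol a}$ of \eqref{b-a}. This proves that $y_{\boldsymbol a}$ solves $\mathcal{L}_{n,B_{\boldsymbol a},\tau}$ iff $\boldsymbol a\in Y_n$; since $\zeta$ is odd and $\wp$ even, $-\boldsymbol a\in Y_n$ with $B_{-\boldsymbol a}=B_{\boldsymbol a}$, so $y_{\boldsymbol a}$ and $y_{-\boldsymbol a}$ solve the same equation.

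For the monodromy \eqref{mono1}--\eqref{mono2} and the uniqueness in (1), I would feed the quasi-periodicity \eqref{40-3} of $\sigma$ into \eqref{ff2-1}. Bookkeeping of the exponential prefactors yields
\[
y_{\boldsymbol a}(z+\omega_k)=\exp\!\left(\omega_k\textstyle\sum_j\zeta(a_j)-\eta_k\sum_j a_j\right)y_{\boldsymbol a}(z),\quad k=1,2,
\]
and substituting \eqref{de-rs} together with the Legendre relation $\tau\eta_1-\eta_2=2\pi i$ collapses the two multipliers to $e^{-2\pi i s}$ and $e^{2\pi i r}$ (and their inverses for $-\boldsymbol a$), giving \eqref{mono1}--\eqref{mono2}. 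Thus $y_{\pm\boldsymbol a}$ are common eigenfunctions of the two commuting translation (monodromy) operators. When $y_{\boldsymbol a},y_{-\boldsymbol a}$ are independent, these operators are simultaneously diagonal with distinct eigenvalues, so their eigenlines are the only Floquet lines; any $\boldsymbol b\in Y_n$ with $B_{\boldsymbol b}=B_{\boldsymbol a}$ produces another Floquet solution $y_{\boldsymbol b}$, forcing $y_{\boldsymbol b}\in\mathbb C y_{\boldsymbol a}\cup\mathbb C y_{-\boldsymbol a}$, and comparing zero divisors (the zeros of a Hermite--Halphen function are exactly its $a_j$) yields $\boldsymbol b=\pm\boldsymbol a$. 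Existence of some $\boldsymbol a$ for each $B$ I would obtain from the surjectivity of $B:Y_n\to\mathbb C$ proved in part (3), or directly by constructing the order-$n$-pole meromorphic Floquet solution of $\mathcal{L}_{n,B,\tau}$ and reading off its $n$ zeros in a fundamental domain.

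For parts (2)--(3) I would build the hyperelliptic structure from the Wronskian. Since $\mathcal{L}_{n,B,\tau}$ has no first-order term, $C:=y_{\boldsymbol a}y_{-\boldsymbol a}'-y_{\boldsymbol a}'y_{-\boldsymbol a}$ is independent of $z$, is $S_n$-invariant, and is odd under $\boldsymbol a\mapsto-\boldsymbol a$. Linear independence of $y_{\pm\boldsymbol a}$ is equivalent to $C\neq0$, and also to $\boldsymbol a\neq-\boldsymbol a$ in $E_\tau$: if $\{a_j\}=\{-a_j\}$ then $y_{\boldsymbol a},y_{-\boldsymbol a}$ share zeros, poles and leading coefficient, so agree up to sign by Liouville, while conversely $y_{\boldsymbol a}=c\,y_{-\boldsymbol a}$ forces equal zero-sets. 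As $B$ is invariant and $C$ anti-invariant under the involution, $C^2$ descends to a single-valued function of $B$; analysing its growth along the degenerations where some $a_j$ approaches a lattice point (so $B=(2n-1)\sum_j\wp(a_j)\to\infty$) shows $C^2=c\,\ell_n(B)$ for a polynomial $\ell_n\in\mathbb Q[g_2,g_3][B]$ of degree $2n+1$, the count matching the number of ramification values. This realises $Y_n\cong\{C^2=\ell_n(B)\}$ and exhibits $B$ as a degree-$2$ cover branched exactly where $\boldsymbol a=-\boldsymbol a$. To close, I would chain $\boldsymbol a\neq-\boldsymbol a\iff\ell_n(B)\neq0\iff(r,s)\notin\tfrac12\mathbb Z^2$: by \eqref{mono1}--\eqref{mono2}, $(r,s)\in\tfrac12\mathbb Z^2$ makes both multipliers $\pm1$, i.e. the branch case, while $\boldsymbol a=-\boldsymbol a$ gives $2\sum_j a_j\in\Lambda_\tau$; and \eqref{fc6} follows since a common point of $\{a_j\}$ and $\{-a_j\}$ would, by simplicity of the zeros and the defining equations, again force $\boldsymbol a=-\boldsymbol a$.

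The main obstacle is the polynomiality and exact degree in part (2): proving that the invariant function $C^2$ is not merely rational but a genuine polynomial of degree exactly $2n+1$ in $B$ with coefficients in $\mathbb Q[g_2,g_3]$. I expect this to require either the explicit three-term recursion for Lam\'e functions, whose characteristic determinant is a degree-$2n+1$ polynomial in $B$, or a careful pole-order estimate of the Wronskian along the boundary degenerations of $Y_n$. The stated fact that every solution of $\mathcal{L}_{n,B,\tau}$ is single-valued and meromorphic (so no logarithmic or apparent-singularity terms appear at the lattice points) is precisely what keeps the indicial bookkeeping and the count $2n+1$ clean.
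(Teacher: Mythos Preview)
The paper does not prove this theorem: it is stated as a citation of \cite{CLW,Whittaker-Watson}, and the only argument the paper supplies is the one-line remark that the monodromy relations \eqref{mono1}--\eqref{mono2} ``follow directly from \eqref{40-2}, \eqref{40-3}, \eqref{ff2-1} and \eqref{de-rs}''. Your derivation of the multipliers via the $\sigma$-transformation law and the Legendre relation is exactly that computation, so on the only point where a comparison is possible, you agree with the paper.

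Your overall sketch follows the classical Hermite--Halphen/Wronskian route of the cited references, and you correctly flag the hard step (polynomiality and exact degree $2n+1$ of $\ell_n$). Two places where your outline is thinner than the original sources deserve mention. First, the implication $\ell_n(B)\neq0\Rightarrow(r,s)\notin\tfrac12\mathbb Z^2$: saying that $(r,s)\in\tfrac12\mathbb Z^2$ ``makes both multipliers $\pm1$, i.e.\ the branch case'' skips a genuine step, since equal multipliers for $y_{\boldsymbol a}$ and $y_{-\boldsymbol a}$ do not by themselves force linear dependence; in \cite{CLW} this uses the finer structure of the Lam\'e monodromy (scalar monodromy occurs only at the branch values). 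Second, the disjointness \eqref{fc6}: a single coincidence $a_j=-a_k$ does not obviously propagate to $\boldsymbol a=-\boldsymbol a$ by ``simplicity of zeros''; the argument in \cite{CLW} goes through the explicit algebraic structure of $Y_n$ (or equivalently through the fact that a half-period zero of $y_{\boldsymbol a}$ would make $y_{\boldsymbol a}$ and $y_{-\boldsymbol a}$ share a zero and a pole pattern incompatible with independence). These are refinements rather than errors in strategy, but they are where the actual work in the cited proofs lies.
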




Remark that \eqref{mono1}-\eqref{mono2} follow directly from \eqref{40-2}, \eqref{40-3}, \eqref{ff2-1} and \eqref{de-rs}, and $-\boldsymbol{a}$ corresponds to $(-r, -s)$. Note also that if $\tilde{a}_j\in a_j+\Lambda_{\tau}$, then $\tilde{\boldsymbol{a}}:=\{\tilde{a}_1,\cdots,\tilde{a}_n\}=\boldsymbol{a}$ in $E_{\tau}$, $y_{\pm\tilde{\boldsymbol{a}}}(z)=C_{\pm}y_{\pm\boldsymbol a}(z)$ for some constants $C_{\pm}\neq 0$, and the corresponding $(\tilde{r},\tilde{s})$ determined by $\tilde{\boldsymbol a}$ via \eqref{de-rs} satisfies $(\tilde{r},\tilde{s})\equiv (r,s)$ mod $\mathbb{Z}^2$. Thus, $(r,s)\in(\mathbb{C}^{2}\backslash \frac{1}{2}\mathbb{Z}^{2})/\sim$ is uniquely determined by $(\tau,B)$ and is known as the monodromy data of $\mathcal{L}_{n,B,\tau}$ (cf. \cite{CL-AIM25, LW2}), because the monodromy group of $\mathcal{L}_{n,B,\tau}$ is precisely generated by the two matrices in \eqref{mono1}-\eqref{mono2}. Here $\sim$ is an equivalent relation defined by\[(r,s)\sim (\tilde r,\tilde s)\quad\text{ if}\quad (r,s)\equiv \pm (\tilde r,\tilde s)\quad\operatorname{mod} \mathbb{Z}^2. \]
Define
\begin{equation}
\Sigma_{n}:=\{(\tau, B)\in \mathbb{H}\times \mathbb{C}\,|\, \ell_n(B;\tau)\neq 0\},
\end{equation}
which is clearly an open connected subset of $\mathbb{H}\times \mathbb{C}$.
Then by the above argument, the map $\Sigma_n\ni (\tau, B)\mapsto (r,s)\in (\mathbb{C}^{2}\backslash \frac{1}{2}\mathbb{Z}^{2})/\sim$ is well-defined. 
\begin{Remark}\label{Rmk2-2}
Given any $(\tau_0, B_0)\in \Sigma_{n}$, take $(r_0,s_0)\in \mathbb{C}^{2}\backslash \frac{1}{2}\mathbb{Z}^{2}$ to be a representative of  the monodromy data of $\mathcal{L}_{n,B_0,\tau_0}$. Since there is a small neighborhood $V\subset \mathbb{C}^{2}\backslash \frac{1}{2}\mathbb{Z}^{2}$ of $(r_0,s_0)$ such that $(r,s)\not\sim (r',s')$ for any $(r,s), (r',s')\in V$ satisfying $(r,s)\neq (r',s')$, there is a small neighborhood $U\subset \Sigma_{n}$ of $(\tau_0, B_0)$ such that $U\ni(\tau,B)\mapsto (r,s)\in(\mathbb{C}^2\setminus\tfrac12\mathbb{Z}^2)/\sim$ can be seen as 
\begin{equation}\label{eq-UV} U\ni(\tau,B)\mapsto (r,s)\in V\subset \mathbb{C}^2\setminus\tfrac12\mathbb{Z}^2,\end{equation}
and so we can consider the local analytic property of this map. Recently, Chen and Lin \cite{CL-AIM25} proved a surprising universal law for this map.
\end{Remark}

\begin{theorem}\cite{CL-AIM25}\label{thm-rsbtau} The map  $\Sigma_n\ni (\tau, B)\mapsto (r,s)\in (\mathbb{C}^{2}\backslash \frac{1}{2}\mathbb{Z}^{2})/\sim$ is holomorphic and locally one-to-one, and satisfies
\begin{equation}\label{brstau}d\tau\wedge dB\equiv 8\pi^2 dr\wedge ds,\quad\forall (\tau, B)\in \Sigma_{n}.\end{equation}
\end{theorem}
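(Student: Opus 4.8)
The plan is to first show the map is holomorphic, and then to reduce the $2$-form identity \eqref{brstau} to the single assertion that the Jacobian $\det\frac{\partial(r,s)}{\partial(\tau,B)}$ equals the constant $\frac1{8\pi^2}$ on the connected open set $\Sigma_n$; local injectivity then follows from non-vanishing of this Jacobian. \emph{Holomorphy.} On $\Sigma_n$ we have $\ell_n(B;\tau)\neq0$, so Theorem~\ref{CLWWW}(2)--(3) gives that $y_{\pm\boldsymbol a}$ are linearly independent, $\{a_j\}\cap\{-a_j\}=\emptyset$, and $B\colon Y_n(\tau)\to\mathbb C$ is an unramified double cover near $\boldsymbol a$; by the implicit function theorem applied to \eqref{hyper} together with $B=(2n-1)\sum_j\wp(a_j)$ one may choose a local holomorphic branch $\boldsymbol a=\boldsymbol a(\tau,B)$. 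Substituting into \eqref{de-rs} and solving the $2\times2$ linear system, whose determinant is $\tau\eta_1-\eta_2=2\pi i\neq0$ by the Legendre relation, gives $(r,s)$ holomorphically in $(\tau,B)$; by Remark~\ref{Rmk2-2} this is the local description \eqref{eq-UV} of the monodromy map, which is therefore holomorphic on $\Sigma_n$.

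\emph{A formula for the Jacobian.} Eliminating $(r,s)$ from \eqref{de-rs} by the Legendre relation gives the closed forms
\[
2\pi i\,s=\eta_1 A-Z,\qquad 2\pi i\,r=\tau Z-\eta_2 A,\qquad A:=\sum_{j}a_j,\quad Z:=\sum_{j}\zeta(a_j),
\]
with $A,Z$ holomorphic in $(\tau,B)$. Differentiating, and using $d\eta_j=\eta_j'\,d\tau$, the derivative-Wronskian identity $\eta_1'\eta_2-\eta_1\eta_2'=-\tfrac{g_2}{12}$ (a consequence of the Legendre relation and Ramanujan's formula for $dE_2/d\tau$), and $\tau\eta_1-\eta_2=2\pi i$ with its $\tau$-derivative $\eta_2'-\tau\eta_1'=\eta_1$, a short computation yields
\[
(2\pi i)^2\,dr\wedge ds=\Big(\eta_1 Z-\tfrac{g_2}{12}A\Big)\,d\tau\wedge dA+\big(\eta_1 A-Z\big)\,d\tau\wedge dZ-2\pi i\,dA\wedge dZ,
\]
hence $(2\pi i)^2\det\tfrac{\partial(r,s)}{\partial(\tau,B)}=\big(\eta_1 Z-\tfrac{g_2}{12}A\big)A_B+\big(\eta_1 A-Z\big)Z_B+2\pi i\,(Z_\tau A_B-Z_B A_\tau)$. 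One now computes $A_B,A_\tau,Z_B,Z_\tau$ by implicit differentiation of the equations defining $\boldsymbol a$ (the $Y_n$-conditions \eqref{hyper}, whose $n$ components sum identically to zero, and $B=(2n-1)\sum_j\wp(a_j)$), using the classical heat-type formulas for $\partial_\tau\wp$ and $\partial_\tau\zeta$; the mechanism is that every $\boldsymbol a$-dependent term cancels, so $\det\frac{\partial(r,s)}{\partial(\tau,B)}$ is an absolute constant. (Alternatively and more conceptually, $\psi:=y_{\boldsymbol a}y_{-\boldsymbol a}$ is elliptic on $E_\tau$ — the exponential factors in \eqref{ff2-1} cancel — and equals $(-1)^n\big(\prod_j\sigma(a_j)^2\big)\prod_j(\wp(z)-\wp(a_j))$, while $y_{\boldsymbol a}'/y_{\boldsymbol a}=(\psi'-W)/(2\psi)$ with $W$ the constant Wronskian of $y_{\pm\boldsymbol a}$; thus $(r,s)$ are, up to obvious ambiguities, periods of the meromorphic differential $-\tfrac{W}{2}\,dz/\psi$ on $E_\tau$, and the Jacobian identity can be derived as a Riemann bilinear relation on $E_\tau$, equivalently on $Y_n$, with $8\pi^2$ coming from the $2\pi i$ of reciprocity and the Legendre relation.)

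\emph{Pinning the constant.} Since the Jacobian is constant on the connected $\Sigma_n$, it suffices to evaluate it in one degeneration. As $\operatorname{Im}\tau\to+\infty$, $\wp(z;\tau)\to\tfrac{\pi^2}{\sin^2\pi z}-\tfrac{\pi^2}{3}$ and $\zeta(z;\tau)\to\pi\cot\pi z+\tfrac{\pi^2}{3}z$, so $\mathcal L_{n,B,\tau}$ becomes the reflectionless trigonometric Lamé equation, whose monodromy exponents are explicit; one finds $B=\tfrac{n(n+1)\pi^2}{3}-4\pi^2 s^2$ and that $r+s\tau=\sum_j a_j$ depends only on $s$ in this limit, whence $s_\tau=0$, $s_B=-1/(8\pi^2 s)$, $r_\tau=-s$, and $\det\frac{\partial(r,s)}{\partial(\tau,B)}=r_\tau s_B-r_B s_\tau=\tfrac1{8\pi^2}$; the limit $B\to\infty$ at fixed $\tau$ works similarly. (One could instead invoke $SL_2(\mathbb Z)$-equivariance: under $\tau\mapsto\tfrac{a\tau+b}{c\tau+d}$ one has $B\mapsto(c\tau+d)^2B$ and $(r,s)$ transforms by a unimodular matrix, so $d\tau\wedge dB$ and $\pm\,dr\wedge ds$ are invariant, reducing constancy to boundedness of a holomorphic function on the compactified modular curve.) This gives \eqref{brstau}, and non-vanishing of the Jacobian gives local injectivity.

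\emph{Main obstacle.} The crux is the computation of $A_\tau,Z_\tau$ with the moving lattice $\Lambda_\tau$ — which forces careful use of the heat-type variational formulas for $\wp,\zeta,\eta_j$ — and the verification that all $\boldsymbol a$-dependent contributions cancel down to a pure constant; in the reciprocity formulation the same difficulty reappears as the residue- and orientation-bookkeeping needed to produce exactly $8\pi^2$ rather than merely ``some constant''. Once constancy is known, the degeneration computation is elementary.
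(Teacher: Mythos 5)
First, note that the paper does not prove Theorem \ref{thm-rsbtau} at all: it is quoted from \cite{CL-AIM25}, so there is no internal proof to compare with, and your proposal must stand on its own. Your preliminary steps are fine: the joint holomorphy of $(\tau,B)\mapsto(r,s)$ via the implicit function theorem on the defining equations of $Y_n$ together with the Legendre relation is sound, and your intermediate identity
\begin{equation*}
(2\pi i)^2\,dr\wedge ds=\Big(\eta_1 Z-\tfrac{g_2}{12}A\Big)\,d\tau\wedge dA+(\eta_1 A-Z)\,d\tau\wedge dZ-2\pi i\,dA\wedge dZ
\end{equation*}
is correct (I checked it using $\eta_2'-\tau\eta_1'=\eta_1$ and $\eta_1\eta_2'-\eta_2\eta_1'=\tfrac{g_2}{12}$). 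But at exactly this point the proof stops: the entire content of the theorem is that, after substituting $A_\tau,A_B,Z_\tau,Z_B$ obtained by implicitly differentiating the $Y_n$-equations and $B=(2n-1)\sum_j\wp(a_j)$ (with the moving lattice, hence the heat-type variational formulas), all $\boldsymbol a$-dependent terms cancel and the expression reduces to the constant $-\tfrac12$. You assert this cancellation ("the mechanism is that every $\boldsymbol a$-dependent term cancels") and even flag it yourself as the main obstacle, but you never carry it out; the alternative route via $y_{\boldsymbol a}y_{-\boldsymbol a}$ and a Riemann bilinear/reciprocity argument is likewise only gestured at. Since this is precisely the "universal law" that makes the theorem nontrivial, the proposal has a genuine gap rather than a complete proof.

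Two further points. The modular-equivariance shortcut you offer for constancy does not work as stated: invariance of $d\tau\wedge dB$ and $dr\wedge ds$ under the simultaneous $SL(2,\mathbb Z)$ action only shows the Jacobian descends to the quotient of the two-dimensional parameter space $\Sigma_n$, not to the compactified modular curve, because a priori it still depends on $B$; removing the $B$-dependence is again the heart of the matter. And the degeneration $\operatorname{Im}\tau\to+\infty$ (or $B\to\infty$) can only \emph{pin} the constant once constancy has been established — by itself it proves nothing about \eqref{brstau} on $\Sigma_n$, and it also needs uniform control up to the cusp since the limit lies outside $\Sigma_n$. So the architecture (holomorphy $+$ constancy of the Jacobian $+$ evaluation in a limit) is reasonable, but the decisive middle step is missing.
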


\begin{Remark}\label{Rmk2-4}
Theorem \ref{thm-rsbtau} will play a key role in our proof of Theorem \ref{DD-conj0}. Indeed, by Theorem \ref{thm-rsbtau} and by taking the open neighborhoods $U, V$ smaller if necessary, it follows from the inverse mapping theorem that the inverse map of the map given in \eqref{eq-UV}
\begin{equation}\label{eq-UV-1} \mathbb{C}^2\setminus\tfrac12\mathbb{Z}^2\supset V\ni (r,s)\mapsto (\tau, B)\in U\subset\Sigma_n\end{equation}
is well-defined and also holomorphic. Denote $\tau_r=\frac{\partial \tau}{\partial r}$, $r_{\tau}=\frac{\partial r}{\partial\tau}$ and so on. Then
\[\begin{pmatrix}\tau_r & \tau_s\\ B_r& B_s\end{pmatrix}\begin{pmatrix}r_{\tau} & r_B\\ s_{\tau}& s_B\end{pmatrix}=\begin{pmatrix}1 & 0\\ 0& 1\end{pmatrix}.\]
Since \eqref{brstau} implies $\tau_rB_s-\tau_sB_r\equiv 8\pi^2$, we obtain that $(\tau_r, \tau_s)\neq (0,0)$ and
\[\begin{pmatrix}B_s & -\tau_s\\ -B_r& \tau_r\end{pmatrix}={8\pi^2}\begin{pmatrix}r_{\tau} & r_B\\ s_{\tau}& s_B\end{pmatrix},\]
namely
\begin{equation}\label{eq2-16}
\tau_s=-8\pi^2r_B,\qquad \tau_r=8\pi^2s_B,\quad (\tau_r, \tau_s)\neq (0,0).
\end{equation}
This \eqref{eq2-16} will be applied in the proof of Theorem \ref{DD-conj0}.
\end{Remark}

For later usage, we also need the so-called pre-modular form $Z^{\mathbf{n}}_{r,s}(\tau)$ introduced by \cite{LW2}, 
which characterizes the monodromy data $(r,s)$ in a precise way.

\begin{definition}\cite{LW2}
A function $f_{r,s}(\tau)$ on $\mathbb{H}$, which depends meromorphically on two parameters $(r,s) (\operatorname{mod} \mathbb{Z}^2)\in\mathbb{C}^2$, is called a pre-modular form of weight $k$ if the following hold:
\begin{itemize}
\item[(1)] If $(r,s)\in\mathbb{C}^2\setminus\frac{1}{2}\mathbb{Z}^2$, then $f_{r,s}(\tau)\not\equiv 0,\infty$ and is meromorphic in $\tau$. Furthermore, it is holomorphic in $\tau$ if $(r,s)\in\mathbb{R}^2\setminus\frac{1}{2}\mathbb{Z}^2$.
\item[(2)] There is $k\in\mathbb{N}^*$ independent of $(r,s)$ such that if $(r,s)$ is any $m$-torsion point for some $m\geq 3$, then $f_{r,s}(\tau)$ is a modular form of weight $k$ with respect to $$\Gamma(m):=\{\gamma
\in SL(2,\mathbb{Z})|\gamma\equiv I_2\mod m \}.$$
\end{itemize}
\end{definition}
For example, we define
\begin{align}\label{z-rs}Z_{r,s}(\tau):=\zeta(r+s\tau;\tau)-r\eta_1(\tau)-s\eta_2(\tau).\end{align}
This $Z_{r,s}(\tau)$ was first introduced by Hecke \cite{Hecke}, and is a pre-modular form of weight one.
The main result of \cite{LW2} is following

\begin{theorem} \cite{LW2}
\label{thm-premodular}  There exists a pre-modular form $$Z_{r,s}^{(n)
}(\tau)\in\mathbb Q[Z_{r,s}(\tau),\wp(r+s\tau;\tau),\wp'(r+s\tau;\tau),g_2(\tau),g_3(\tau)]$$ of weight $\frac{1}{2}n(n+1)$ such that the following holds: 
 Given $(r,s)\in\mathbb{C}^2\setminus\frac{1}{2}\mathbb{Z}^2$ and $\tau_0\in\mathbb{H}$, there is $B\in\mathbb{C}$ such that $(r,s)$ is the monodromy data of $\mathcal{L}_{n,B,\tau_0}$, i.e. \eqref{mono1}-\eqref{mono2} holds for $\mathcal{L}_{n,B,\tau_0}$,
if and only if $Z_{r,s}^{(n)}(\tau_0)=0$.
\end{theorem}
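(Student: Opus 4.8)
The plan is to realize the sought pre-modular form as the defining equation of the \emph{realizable locus}
\[\mathcal R_n:=\{(r,s,\tau): (r,s)\text{ is the monodromy data of }\mathcal L_{n,B,\tau}\text{ for some }B\in\mathbb C\}.\]
The starting point is the observation that, for a branch point $\boldsymbol a\in Y_n(\tau)$ with accessory parameter $B=B_{\boldsymbol a}$ and monodromy data $(r,s)$ given by \eqref{de-rs}, Hecke's form collapses to a symmetric function of $\boldsymbol a$:
\[Z_{r,s}(\tau)=\zeta(r+s\tau;\tau)-r\eta_1-s\eta_2=\zeta\Big(\textstyle\sum_{j=1}^n a_j\Big)-\sum_{j=1}^n\zeta(a_j),\]
while $\wp(r+s\tau)=\wp(\sum_j a_j)$ and $\wp'(r+s\tau)=\wp'(\sum_j a_j)$. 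Thus $(r,s)$ is realizable over $\tau$ precisely when there exists $\boldsymbol a\in Y_n(\tau)$ with prescribed $\sum_j a_j$ and $\sum_j\zeta(a_j)$, and the three displayed quantities are exactly the data we must encode algebraically.

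First I would express these three quantities as algebraic functions on the hyperelliptic curve $Y_n(\tau)\cong\{(B,C):C^2=\ell_n(B;\tau)\}$ of Theorem \ref{CLWWW}. By the Weierstrass addition theorem, $\zeta(\sum a_j)-\sum\zeta(a_j)$, $\wp(\sum a_j)$ and $\wp'(\sum a_j)$ are symmetric rational functions of $\{(\wp(a_j),\wp'(a_j))\}_j$, hence rational in the elementary symmetric polynomials of the $\wp(a_j)$ together with one \emph{odd} symmetric combination of the $\wp'(a_j)$. For $\boldsymbol a\in Y_n$ these are controlled by the Lam\'e structure: $\sum_j\wp(a_j)=B/(2n-1)$ by \eqref{b-a}, the product $\Phi(z)=\prod_j(\wp(z)-\wp(a_j))$ is the unique degree-$n$ polynomial-in-$\wp$ solution of the symmetric square of $\mathcal L_{n,B,\tau}$ so its coefficients are polynomials in $B,g_2,g_3$, and the square of the remaining odd combination is again polynomial in $B,g_2,g_3$, its two branches being recorded by the coordinate $C$ with $C^2=\ell_n(B)$. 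Consequently $Z_{r,s},\wp(r+s\tau),\wp'(r+s\tau)$ all lie in the quadratic function field $\mathbb Q(g_2,g_3,B)[C]/(C^2-\ell_n(B))$, and eliminating $B$ and $C$ produces a relation
\[Z^{(n)}_{r,s}(\tau)=P\big(Z_{r,s},\wp(r+s\tau),\wp'(r+s\tau),g_2,g_3\big)\]
vanishing identically along $\mathcal R_n$; this gives the forward implication.

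The pre-modular properties are then bookkeeping. Under $SL(2,\mathbb Z)$ acting simultaneously on $\tau$ and on $(r,s)$, the generators $Z_{r,s},\wp(r+s\tau),\wp'(r+s\tau),g_2,g_3$ transform as forms of weights $1,2,3,4,6$, and for an $m$-torsion $(r,s)$ fixed by $\Gamma(m)$ they become genuine $\Gamma(m)$-modular forms; I would arrange $P$ to be weighted-homogeneous of total weight $\tfrac12 n(n+1)$ (the weight is forced by the construction and matches the known instance $Z^{(1)}_{r,s}=Z_{r,s}$), so $Z^{(n)}_{r,s}$ has that weight. Holomorphy in $\tau$ for real $(r,s)$ is immediate since then $r+s\tau\notin\Lambda_\tau$, and $Z^{(n)}_{r,s}\not\equiv 0,\infty$ because $\mathcal R_n$ is a proper subset.

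The crux is the reverse implication: that $Z^{(n)}_{r,s}(\tau_0)=0$ forces realizability, i.e. that the constructed polynomial carries \emph{no extraneous zeros}. I expect this to be the main obstacle. My plan is to settle it first for $m$-torsion $(r,s)$ with $m\ge 3$, where $Z^{(n)}_{r,s}$ is an honest $\Gamma(m)$-modular form: the valence formula pins down its total number of zeros in a fundamental domain in terms of the weight $\tfrac12 n(n+1)$, and this must be matched against the number of $\tau_0$ (modulo $\Gamma(m)$) at which some $\mathcal L_{n,B,\tau_0}$ has monodromy exactly $(r,s)$ — a count governed by $\deg\ell_n=2n+1$ and the branch structure of $Y_n$ in Theorem \ref{CLWWW}. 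Once the two counts agree, every zero must be realizable, giving the equivalence for torsion points; the general case $(r,s)\in\mathbb C^2\setminus\tfrac12\mathbb Z^2$ then follows from the meromorphic dependence on $(r,s)$ and the density of torsion points. As a structural cross-check that $\{Z^{(n)}_{r,s}=0\}$ has no spurious components, one may invoke Theorem \ref{thm-rsbtau}: since $(\tau,B)\mapsto(r,s)$ is a local biholomorphism, $\mathcal R_n$ is a smooth codimension-one hypersurface, whence $Z^{(n)}_{r,s}$ is, up to nonvanishing factors, its reduced defining equation.
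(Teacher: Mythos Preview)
This theorem is not proved in the present paper: it is quoted from \cite{LW2} as an external input, and the paper only records the statement together with the explicit formulas for $n\le 3$. So there is no proof here against which to compare your proposal.

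On the proposal itself: the elimination strategy you outline --- realizing $Z_{r,s}$, $\wp(r+s\tau)$ and $\wp'(r+s\tau)$ as rational functions on the hyperelliptic curve $Y_n(\tau)$ via the addition law and the symmetric-square equation, then eliminating $(B,C)$ --- is indeed the construction carried out in \cite{LW2}, and the weight and modularity bookkeeping is correct in spirit. One terminological slip: where you write ``for a branch point $\boldsymbol a\in Y_n(\tau)$'' you mean an arbitrary (in fact non-branch) point, since branch points correspond to $(r,s)\in\tfrac12\mathbb Z^2$, which is excluded.

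The substantive gap is in your plan for the converse. A valence-formula count at an $m$-torsion $(r,s)$ only matches the count of realizable $\tau$'s if you already know that each realizable $\tau$ contributes a \emph{simple} zero of $Z^{(n)}_{r,s}(\cdot)$; but that is exactly Theorem~\ref{thm-sz}, which is stated here as a later property of the already-constructed $Z^{(n)}_{r,s}$ (from \cite{CKL-Dahmen,CL-AIM25}), not as an input available during its construction. Similarly, invoking Theorem~\ref{thm-rsbtau} to certify that $\mathcal R_n$ is a smooth hypersurface is anachronistic: that result is from \cite{CL-AIM25} and postdates \cite{LW2}. In the original argument the absence of extraneous zeros is handled directly from the algebra of the elimination (so that the resulting polynomial is the reduced defining equation of the image), not through an external counting or smoothness input. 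Your density-of-torsion step would also need care: agreement of two analytic sets on torsion fibres does not by itself force agreement everywhere without some irreducibility or properness statement, which again amounts to controlling the elimination.
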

For example, write $Z=Z_{r,s}(\tau)$, $\wp=\wp(r+s\tau;\tau)$ and $\wp^{\prime
}=\wp^{\prime}(r+s\tau;\tau)$ for convenience, then
\[
Z_{r,s}^{(1)}(\tau)=Z,\quad\quad Z_{r,s}^{(2)}(\tau)=Z^{3}-3\wp Z-\wp^{\prime},
\]
\begin{align*}
Z_{r,s}^{(3)}(\tau)=  &  Z^{6}-15\wp Z^{4}-20\wp^{\prime}Z^{3}+\left(
\tfrac{27}{4}g_{2}-45\wp^{2}\right)  Z^{2}\\
&  -12\wp \wp^{\prime}Z-\tfrac{5}{4}(\wp^{\prime})^{2}.
\end{align*}
A direct consequence of Remark \ref{Rmk2-4} and Theorem \ref{thm-premodular} is

\begin{corollary}\label{Coro-11}
Recalling the local map $V\ni (r,s)\mapsto (\tau, B)\in U$ in \eqref{eq-UV-1}, there holds
\[Z_{r,s}^{(n)}(\tau(r,s))\equiv 0.\]
\end{corollary}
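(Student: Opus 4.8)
The plan is to obtain this simply by unwinding how the local inverse map $V\ni(r,s)\mapsto(\tau,B)\in U$ of \eqref{eq-UV-1} was built and then quoting Theorem \ref{thm-premodular}. Recall that the map $U\ni(\tau,B)\mapsto(r,s)\in V$ of \eqref{eq-UV} is, by its very construction in Remark \ref{Rmk2-2}, nothing but a single-valued branch of the monodromy data map: since $U\subset\Sigma_n$, for each $(\tau,B)\in U$ we have $\ell_n(B;\tau)\neq0$, so by Theorem \ref{CLWWW}(2)--(3) the corresponding pair $\pm\boldsymbol{a}\in Y_n(\tau)$ with $B=B_{\boldsymbol{a}}$ satisfies $\boldsymbol{a}\neq-\boldsymbol{a}$ in $E_\tau$, and the $(r,s)\in V$ assigned to $(\tau,B)$ is precisely the representative (selected inside the neighborhood $V$) of the class determined by \eqref{de-rs} for which the monodromy matrices of $\mathcal{L}_{n,B,\tau}$ take the diagonal form \eqref{mono1}--\eqref{mono2}.

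Given this, I would fix an arbitrary $(r,s)\in V$, set $\tau_0:=\tau(r,s)\in\mathbb{H}$, and observe that, by the previous paragraph, $B(r,s)\in\mathbb{C}$ is an accessory parameter for which $(r,s)$ is the monodromy data of $\mathcal{L}_{n,B(r,s),\tau_0}$. Theorem \ref{thm-premodular} then forces $Z^{(n)}_{r,s}(\tau_0)=0$, i.e.\ $Z^{(n)}_{r,s}(\tau(r,s))=0$. Since $(r,s)\in V$ was arbitrary, this is exactly the asserted identity $Z^{(n)}_{r,s}(\tau(r,s))\equiv 0$ on $V$.

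I do not expect a genuine obstacle here; the corollary is a direct translation of Theorem \ref{thm-premodular} through the locally biholomorphic parametrization provided by Theorem \ref{thm-rsbtau} and Remark \ref{Rmk2-4}. The only point that deserves a line of care is the bookkeeping between equivalence classes in $(\mathbb{C}^2\setminus\frac12\mathbb{Z}^2)/\sim$ and their chosen representatives in $V$: Theorem \ref{thm-premodular} is stated for an honest pair $(r,s)$ realizing \eqref{mono1}--\eqref{mono2}, so one must confirm that the representative used in $V$ is that pair and not its image under $(r,s)\mapsto(-r,-s)$, which would swap the two diagonal monodromy exponents. This is built into the choice of $U$ and $V$ in Remark \ref{Rmk2-2}; and in any event the ambiguity is immaterial, because $\zeta$ is odd and $\wp$ is even, so the zero set $\{\,\tau\in\mathbb{H}:Z^{(n)}_{r,s}(\tau)=0\,\}$ is invariant under $(r,s)\mapsto(-r,-s)$.
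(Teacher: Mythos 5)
Your argument is correct and is exactly the paper's (implicit) proof: the inverse map of \eqref{eq-UV} assigns to each $(r,s)\in V$ a pair $(\tau(r,s),B(r,s))\in\Sigma_n$ for which $(r,s)$ is the monodromy data of $\mathcal{L}_{n,B(r,s),\tau(r,s)}$, and Theorem \ref{thm-premodular} then gives $Z^{(n)}_{r,s}(\tau(r,s))=0$ pointwise on $V$. The paper treats this as a direct consequence of Remark \ref{Rmk2-4} and Theorem \ref{thm-premodular}, and your extra remark on the $(r,s)\mapsto(-r,-s)$ ambiguity is a harmless refinement of the same route.
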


Further interesting properties of $Z_{r,s}^{(n)
}(\tau)$ can be found in \cite{CKL-Dahmen,Lin-JDG25}. For example,

\begin{theorem}\cite{CKL-Dahmen}\label{thm-sz} For any fixed $n$ and $(r,s)\in\mathbb{C}^2\setminus\frac{1}{2}\mathbb{Z}^2$, $Z_{r,s}^{(n)}(\cdot)$ has at most simple zeros in $\mathbb{H}$.
\end{theorem}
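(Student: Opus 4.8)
Fix $(r_0,s_0)\in\mathbb C^2\setminus\tfrac12\mathbb Z^2$; we must show that every zero $\tau_0\in\mathbb H$ of $W(\tau):=Z_{r_0,s_0}^{(n)}(\tau)$ satisfies $W'(\tau_0)\neq 0$. The plan is to place $\tau_0$ inside the good locus $\Sigma_n$, then, via the local coordinates of Theorem~\ref{thm-rsbtau}, reduce the statement to one scalar inequality, and finally establish that inequality by a direct computation of $\partial_\tau Z_{r,s}^{(n)}$. For the first reduction: Theorem~\ref{thm-premodular} gives $B_0\in\mathbb C$ for which $(r_0,s_0)$ is the monodromy data of $\mathcal L_{n,B_0,\tau_0}$. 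If $\ell_n(B_0;\tau_0)=0$, Theorem~\ref{CLWWW}(2) forces the Hermite-Halphen solutions to be proportional, $y_{-\boldsymbol a}=c\,y_{\boldsymbol a}$; comparing this with the transformation laws $y_{\pm\boldsymbol a}(z+\omega_1)=e^{\mp 2\pi i s_0}y_{\pm\boldsymbol a}(z)$ and $y_{\pm\boldsymbol a}(z+\omega_2)=e^{\pm 2\pi i r_0}y_{\pm\boldsymbol a}(z)$ yields $e^{4\pi i s_0}=e^{4\pi i r_0}=1$, i.e.\ $(r_0,s_0)\in\tfrac12\mathbb Z^2$, a contradiction. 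Hence $(\tau_0,B_0)\in\Sigma_n$, so Theorem~\ref{thm-rsbtau} and Remark~\ref{Rmk2-4} apply at $(\tau_0,B_0)$ (and this is where the hypothesis $(r_0,s_0)\notin\tfrac12\mathbb Z^2$ enters).

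Second, take the local biholomorphism of Remark~\ref{Rmk2-4}: neighbourhoods $U\ni(\tau_0,B_0)$ in $\Sigma_n$ and $V\ni(r_0,s_0)$, with holomorphic inverse $V\ni(r,s)\mapsto(\tau(r,s),B(r,s))\in U$ satisfying $\tau(r_0,s_0)=\tau_0$ and, by \eqref{eq2-16}, $\tau_r=8\pi^2 s_B$, $\tau_s=-8\pi^2 r_B$, $(\tau_r,\tau_s)\neq(0,0)$. Put $F(r,s,\tau):=Z_{r,s}^{(n)}(\tau)$, holomorphic near $(r_0,s_0,\tau_0)$ and not identically zero (by property~(1) of a pre-modular form, $Z_{r_0,s_0}^{(n)}(\cdot)\not\equiv 0$). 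Corollary~\ref{Coro-11} gives the identity $F(r,s,\tau(r,s))\equiv 0$ on $V$, so the analytic hypersurface $\{F=0\}\subset\mathbb C^3$ contains the embedded $2$-disc $\Gamma:=\{(r,s,\tau(r,s)):(r,s)\in V\}$. Differentiating this identity in $r$ and $s$ and inserting \eqref{eq2-16} gives, along $\Gamma$,
\[\nabla_{(r,s,\tau)}F = F_\tau\cdot\bigl(-8\pi^2 s_B,\;8\pi^2 r_B,\;1\bigr).\]
Since $W'(\tau_0)=F_\tau(r_0,s_0,\tau_0)$ and the vector on the right is never zero, the entire theorem is equivalent to the single inequality $F_\tau(r_0,s_0,\tau_0)\neq 0$; geometrically this says $\{F=0\}$ is smooth and reduced at $(r_0,s_0,\tau_0)$ — in which case it coincides with $\Gamma$ there — and the display shows the remaining first-order partials of $F$ cannot help, as they vanish exactly when $F_\tau$ does.

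Third, and this is the crux: one must show $\partial_\tau Z_{r,s}^{(n)}(\tau)\neq 0$ at every zero with $(r,s)\notin\tfrac12\mathbb Z^2$. All identities produced by Corollary~\ref{Coro-11} (and by $Z_{r(\tau,B),s(\tau,B)}^{(n)}(\tau)\equiv 0$ on $\Sigma_n$) merely encode $\Gamma\subset\{F=0\}$; differentiating them and using \eqref{eq2-16} and \eqref{brstau} collapses to tautologies, so they do not see the order of vanishing transverse to $\Gamma$. I would therefore compute $\partial_\tau Z_{r,s}^{(n)}$ head-on: starting from $Z_{r,s}^{(n)}(\tau)\in\mathbb Q[Z_{r,s},\wp(r+s\tau;\tau),\wp'(r+s\tau;\tau),g_2,g_3]$, differentiate in $\tau$ using the heat equation $\partial_\tau\theta_1=\tfrac1{4\pi i}\partial_z^2\theta_1$ for the Jacobi theta function (which supplies the $\tau$-derivatives of $\zeta$, $\wp$, $\wp'$ and of $\eta_1,\eta_2$), then simplify on the zero locus using $Z_{r,s}^{(n)}(\tau)=0$ together with the Lam\'e relations $\sum_j a_j=r+s\tau$, $\sum_j\zeta(a_j)=r\eta_1+s\eta_2$ and $B_0=(2n-1)\sum_j\wp(a_j)$ from \eqref{de-rs}--\eqref{b-a}. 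The aim is a closed expression for $\partial_\tau Z_{r,s}^{(n)}$ on $\{Z_{r,s}^{(n)}=0\}$ whose only possible zeros occur when $(r,s)\in\tfrac12\mathbb Z^2$ or $\ell_n(B_0;\tau)=0$, both already excluded. It is natural to organize the algebra by induction on $n$ along the recursion generating $Z_{r,s}^{(n)}$, the base case $n=1$ ($Z_{r,s}^{(1)}=Z_{r,s}$, see \eqref{z-rs}) being a direct theta-function computation. The main obstacle is exactly keeping this explicit $\tau$-derivative under control for general $n$; everything else in the plan is soft.
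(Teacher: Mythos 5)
Your Steps 1--2 are correct but carry essentially no weight. Step 1 (placing $(\tau_0,B_0)$ in $\Sigma_n$: a zero of $Z^{(n)}_{r_0,s_0}$ gives monodromy data via Theorem \ref{thm-premodular}, and proportional Hermite--Halphen solutions would force $(r_0,s_0)\in\tfrac12\mathbb{Z}^2$) is a sound observation. But Step 2 is circular: since $W'(\tau_0)=F_\tau(r_0,s_0,\tau_0)$, the ``single scalar inequality'' you reduce the theorem to is literally the theorem itself, and, as you yourself note, differentiating the identity $F(r,s,\tau(r,s))\equiv 0$ of Corollary \ref{Coro-11} together with \eqref{eq2-16} only records that the graph of $(r,s)\mapsto\tau(r,s)$ lies inside $\{F=0\}$; it says nothing about the order of vanishing transverse to that graph. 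The entire content of Theorem \ref{thm-sz} is therefore your Step 3, and Step 3 is not a proof but a programme: you propose to compute $\partial_\tau Z^{(n)}_{r,s}$ via the theta heat equation, simplify on the zero locus, and hope for a closed expression whose zeros occur only when $(r,s)\in\tfrac12\mathbb{Z}^2$ or $\ell_n(B;\tau)=0$, organized by induction ``along the recursion generating $Z^{(n)}_{r,s}$''. No such recursion is exhibited, no such formula is produced even for $n=2$ or $n=3$ (where $Z^{(n)}_{r,s}$ is written out explicitly in Section 2), and there is no argument that the putative expression would be zero-free off the excluded sets. Even your base case is understated: the simple-zero property of $Z_{r,s}=Z^{(1)}_{r,s}$ for general complex $(r,s)$ is itself a substantive theorem (it is tied to the Green function/Painlev\'e VI circle of results in \cite{CKLW,LW}), not a routine theta computation.

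For comparison: this paper does not prove Theorem \ref{thm-sz} at all --- it quotes it from \cite{CKL-Dahmen} and remarks that a different, simpler proof appears in \cite{CL-AIM25}. Both known arguments require genuinely more input than the soft identities you invoke (which, as you observed, collapse to tautologies); in particular the universal law of Theorem \ref{thm-rsbtau} must be exploited beyond the first-order differentiation of Corollary \ref{Coro-11}, or one must go through the monodromy/Painlev\'e VI machinery of the cited works. As it stands, your proposal establishes only the (easy) non-degeneracy of the ambient setup and then defers the actual theorem to an uncompleted computation, so it has a genuine gap at its central step.
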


A different but simpler proof of Theorem \ref{thm-sz} was recently given by \cite{CL-AIM25}.

\subsection{Connection between Lam\'{e} equation and multiple Green function}
The Green function $G$ on $E_{\tau}$ can be expressed explicitly in terms of elliptic
functions. In particular, it was proved in \cite{LW} that
\begin{equation}
\label{G_z}-4\pi G_z(z;\tau)=-4\pi \frac{\partial G}{\partial z}(z;\tau)=\zeta(z)-r\eta_{1}%
-s\eta_{2}=\zeta(z)-z\eta_{1}+2\pi i s,
\end{equation}
where the Legendre relation $\tau\eta_1-\eta_2=2\pi i$ is used and $(r,s)$ is defined by  \begin{equation}\label{realrs}z=r+s\tau\quad\text{ with }\quad r,s\in \mathbb{R}.\end{equation} By (\ref{G_z})-\eqref{realrs}, the critical point
equations (\ref{poho}) can be translated into the following equivalent
system:
\begin{equation}
\sum_{k\not =j}(\zeta(a_{j}-a_{k})+\zeta(a_{k})-\zeta(a_{j}))=0,\text{
}\forall1\leq j\leq n\label{ff3-1}%
\end{equation}
(there are only $n-1$ independent equations), and
\begin{equation}
\sum_{j=1}^{n}\nabla G(a_{j})=0,\label{ff4}%
\end{equation}
subject to the constraint $\boldsymbol{a}\in(E_{\tau}^{\times})^{n}%
\backslash \Delta_{n}$, i.e.
\begin{equation}
a_{j}\not =0,\quad a_{j}\not =a_{k}\quad \text{in}\;E_{\tau},\quad \forall
j\not =k.\label{fc5}%
\end{equation}
See \cite{CLW} for the proof. Recalling the definition \eqref{hyper} of $Y_n$, we conclude that $\boldsymbol{a}$ is a critical point of 
$G_n$ if and only if $\boldsymbol a\in Y_n$ and satisfies \eqref{ff4}. On the other hand, since $G(z)=G(-z)$ is even, \eqref{ff4} holds automatically for any branch point of $Y_n$. Therefore the above arguments yield

\begin{theorem}\cite{CLW}\label{thm-CLW}
\begin{itemize}
\item[(1)] $\boldsymbol a$ is a trivial critical point of $G_n$ if and only if $\boldsymbol a$ is a branch point of $Y_n$ (i.e. $\ell_n(B_{\boldsymbol a})=0$). Consequently, there is an one-to-one correspondence between trivial critical points of $G_n$ and distinct roots of $\ell_n$.
\item[(2)] $\boldsymbol{a}$ is a nontrivial critical point of $G_n$ if and only if $\boldsymbol{a}\in Y_n$ is not a branch point (i.e. $\ell_n(B_{\boldsymbol{a}})\neq 0$) and satisfies \eqref{ff4}. In particular, for a nontrivial critical point $\boldsymbol{a}$ of $G_n$, we have
\[\{a_1,\cdots, a_n\}\cap\{-a_1,\cdots, -a_n\}=\emptyset \quad\text{in }E_{\tau},\]
and so
\begin{equation}\label{eq-ex-aj}2a_j\notin \Lambda_{\tau}=\mathbb{Z}+\mathbb{Z}\tau, \;\text{ i.e. }\;\wp'(a_j)\neq 0, \infty, \quad \forall j.\end{equation}
\end{itemize}
\end{theorem}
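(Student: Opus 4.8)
The plan is to deduce Theorem~\ref{thm-CLW} from the reduction already in place, namely that $\boldsymbol a$ is a critical point of $G_n$ exactly when $\boldsymbol a\in Y_n$ and \eqref{ff4} holds, by overlaying the trivial/nontrivial dichotomy onto the branch/non-branch dichotomy of the hyperelliptic curve $Y_n$. The point is that both dichotomies are governed by the single condition $\boldsymbol a=-\boldsymbol a$ in $E_\tau$: by definition $\boldsymbol a$ is trivial iff $\boldsymbol a=-\boldsymbol a$, while by Theorem~\ref{CLWWW}(3) a point $\boldsymbol a\in Y_n$ is a branch point iff $\boldsymbol a=-\boldsymbol a$, equivalently (Theorem~\ref{CLWWW}(2)) iff $\ell_n(B_{\boldsymbol a})=0$. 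Thus, once I check that the auxiliary constraint \eqref{ff4} is vacuous on branch points, the two classifications coincide and both parts drop out.

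For part~(1) I would first isolate the only step that is not bookkeeping: if $\boldsymbol a\in Y_n$ is a branch point then $\{a_1,\dots,a_n\}=\{-a_1,\dots,-a_n\}$ in $E_\tau$, and since $G$ is even, $\nabla G(-z)=-\nabla G(z)$; reindexing gives $\sum_j\nabla G(a_j)=\sum_j\nabla G(-a_j)=-\sum_j\nabla G(a_j)$, so \eqref{ff4} holds. Hence a branch point, being already in $Y_n$, is a critical point, and it is trivial because $\boldsymbol a=-\boldsymbol a$. Conversely a trivial critical point lies in $Y_n$ and satisfies $\boldsymbol a=-\boldsymbol a$, so it is a branch point by Theorem~\ref{CLWWW}(3); this gives the equivalence. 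For the stated bijection, note that $B_{-\boldsymbol a}=B_{\boldsymbol a}$ by \eqref{b-a} and the evenness of $\wp$, so $\boldsymbol a$ and $-\boldsymbol a$ are the two points of a generic fiber of the degree-two covering $B\colon Y_n\to\mathbb C$. By Theorem~\ref{CLWWW}(2) this fiber degenerates to a single ramification point precisely over the zeros of $\ell_n$; since a degree-two covering has a unique ramified preimage over each branch value, $\boldsymbol a\mapsto B_{\boldsymbol a}$ restricts to a bijection between branch points and the distinct roots of $\ell_n$.

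Part~(2) is the logical complement and needs no new ingredient. A nontrivial critical point lies in $Y_n$, satisfies \eqref{ff4}, and has $\boldsymbol a\ne-\boldsymbol a$, hence is not a branch point, i.e. $\ell_n(B_{\boldsymbol a})\ne0$; conversely any non-branch point of $Y_n$ obeying \eqref{ff4} is a critical point with $\boldsymbol a\ne-\boldsymbol a$, hence nontrivial. For the consequences I would feed $\ell_n(B_{\boldsymbol a})\ne0$ into Theorem~\ref{CLWWW}(2): then $y_{\boldsymbol a},y_{-\boldsymbol a}$ are linearly independent and \eqref{fc6} yields $\{a_1,\dots,a_n\}\cap\{-a_1,\dots,-a_n\}=\emptyset$. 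Specializing the resulting inequality $a_j\ne-a_k$ to $k=j$ gives $2a_j\notin\Lambda_\tau$, which says exactly that $a_j$ is neither a half-period nor a point of $\Lambda_\tau$; since $\wp'$ vanishes precisely at the three half-periods and has poles precisely on $\Lambda_\tau$, this is \eqref{eq-ex-aj}, namely $\wp'(a_j)\ne0,\infty$ (consistent with $a_j\ne0$ already built into the definition \eqref{hyper} of $Y_n$, equivalently \eqref{fc5}).

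Because the argument merely matches two equivalent descriptions, I do not anticipate a real obstacle; the weight is carried by the imported facts from Theorem~\ref{CLWWW} and by the one-line evenness identity that frees \eqref{ff4} on branch points. If I wanted a self-contained treatment I would also reprove the underlying reduction from \eqref{G_z}: rewriting \eqref{poho} in $z$-derivative form (legitimate since $\nabla G=0\Leftrightarrow G_z=0$ for the real function $G$) and using the identity $\sum_{k\ne j}\zeta(a_j-a_k)-n\zeta(a_j)=\big(\sum_{k\ne j}[\zeta(a_j-a_k)+\zeta(a_k)-\zeta(a_j)]\big)-\sum_k\zeta(a_k)$, one finds that the $j$-th critical equation equals the $j$-th defining equation of $Y_n$ in \eqref{ff3-1} plus a single $j$-independent term that is a constant multiple of $\sum_k G_z(a_k)$; summing over $j$ and invoking the oddness of $\zeta$ annihilates the $Y_n$-contribution and isolates \eqref{ff4}, which is exactly the mechanism splitting \eqref{poho} into \eqref{ff3-1} and \eqref{ff4}.
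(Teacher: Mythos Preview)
Your proposal is correct and follows essentially the same route as the paper: the paper's own argument is the short paragraph preceding the theorem, which reduces \eqref{poho} to \eqref{ff3-1}--\eqref{ff4}, observes that the evenness of $G$ makes \eqref{ff4} automatic at branch points, and then invokes Theorem~\ref{CLWWW} to match the trivial/nontrivial dichotomy with the branch/non-branch dichotomy of $Y_n$. Your write-up is more detailed (making the bijection with roots of $\ell_n$ explicit via the degree-two covering, and spelling out how \eqref{fc6} yields \eqref{eq-ex-aj}), but the underlying mechanism is identical.
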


Note that Theorem \ref{Thm-B} is a direct consequence of Theorem \ref{thm-CLW}-(1), because it is well known (cf. \cite{CLW}) that the Lam\'{e} polynomial $\ell_n(\cdot;\tau)$ has $2n+1$ distinct roots for almost all $\tau$'s (including $\tau\in i\mathbb{R}_{>0}$).

Suppose $\boldsymbol{a}=\{a_1,\cdots,a_n\}$ is a nontrivial critical point of $G_n(\cdot;\tau)$.
Notice that if we write
$$\sum_{j=1}^na_j=r+s\tau,\quad\text{ with }\quad r,s\in\mathbb{R},$$
then it follows from (\ref{G_z})-\eqref{realrs} that \eqref{ff4} is equivalent to 
\[\sum_{j=1}^n\zeta(a_j)=r\eta_1+s\eta_2,\]
which is precisely the second equation of \eqref{de-rs}. Then it follows from Theorem \ref{CLWWW} that $(r,s)\notin\frac{1}{2}\mathbb{Z}^2$ is the monodromy data of $\mathcal{L}_{n,B_{\boldsymbol a},\tau}$, so Theorem \ref{thm-premodular} implies $Z_{r,s}^{(n)}(\tau)=0$. The converse statement also holds, which has been proved by Lin and Wang \cite{LW2}.

\begin{theorem}\cite{LW2}\label{thm-clw2}
\begin{itemize}
\item[(1)] Suppose $\boldsymbol{a}=\{a_1,\cdots,a_n\}$ is a nontrivial critical point of $G_n(\cdot;\tau)$, and write
\begin{equation}\label{sumaj} \sum_{j=1}^na_j=r+s\tau,\quad\text{ with }\quad r,s\in\mathbb{R}.
\end{equation}
Then $(r,s)\in\mathbb{R}^2\setminus\frac{1}{2}\mathbb{Z}^2$ and $Z_{r,s}^{(n)}(\tau)=0$.
\item[(2)]  Let $(r,s)\in\mathbb{R}^2\setminus\frac{1}{2}\mathbb{Z}^2$ such that $Z_{r,s}^{(n)}(\tau)=0$. Then $G_n(\cdot;\tau)$ has a unique nontrivial critical point $\boldsymbol{a}=\{a_1,\cdots,a_n\}$ satisfying $\sum_{j=1}^na_j=r+s\tau$.
\end{itemize} 
\end{theorem}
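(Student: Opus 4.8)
The plan is to read both implications through the dictionary of Theorem~\ref{thm-CLW}, which identifies a nontrivial critical point of $G_n(\cdot;\tau)$ with a non-branch point $\boldsymbol a\in Y_n(\tau)$ (so $\ell_n(B_{\boldsymbol a};\tau)\neq0$) that additionally satisfies the balancing condition \eqref{ff4}, and then to feed this into the monodromy description of Theorem~\ref{CLWWW} and the vanishing criterion of Theorem~\ref{thm-premodular}. The entire argument comes down to matching the ``refined'' monodromy data $(r,s)$ defined by \eqref{de-rs} with the real vector $(r,s)$ defined by \eqref{sumaj}, and then inverting this correspondence.

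For part (1), let $\boldsymbol a=\{a_1,\dots,a_n\}$ be a nontrivial critical point and write $\sum_j a_j=r+s\tau$ with $r,s\in\mathbb R$ as in \eqref{sumaj}. By Theorem~\ref{thm-CLW}(2), $\boldsymbol a\in Y_n$ is not a branch point and satisfies \eqref{ff4}. First I would convert \eqref{ff4} into the second line of \eqref{de-rs}: since $G$ is real, $\sum_j\nabla G(a_j)=0$ is equivalent to $\sum_j G_z(a_j)=0$, and summing \eqref{G_z}--\eqref{realrs} over $j$ (written for each $a_j=r_j+s_j\tau$ with $r_j,s_j\in\mathbb R$, so that $\sum_j r_j=r$ and $\sum_j s_j=s$) gives $-4\pi\sum_j G_z(a_j)=\sum_j\zeta(a_j)-r\eta_1-s\eta_2$; hence \eqref{ff4} is precisely $\sum_j\zeta(a_j)=r\eta_1+s\eta_2$. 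Thus the real pair $(r,s)$ solves both equations of \eqref{de-rs}, so by Theorem~\ref{CLWWW}(2) it is the monodromy datum of $\mathcal L_{n,B_{\boldsymbol a},\tau}$ and, crucially, $(r,s)\notin\frac12\mathbb Z^2$. Since a monodromy datum realized by some $B$ forces the pre-modular form to vanish, Theorem~\ref{thm-premodular} yields $Z^{(n)}_{r,s}(\tau)=0$, proving (1).

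For the existence part of (2), let $(r,s)\in\mathbb R^2\setminus\frac12\mathbb Z^2$ with $Z^{(n)}_{r,s}(\tau)=0$. By the ``only if'' direction of Theorem~\ref{thm-premodular} there is $B\in\mathbb C$ for which $(r,s)$ is the monodromy datum of $\mathcal L_{n,B,\tau}$; since $(r,s)\notin\frac12\mathbb Z^2$, Theorem~\ref{CLWWW}(2) gives $\ell_n(B;\tau)\neq0$ and a pair $\pm\boldsymbol a\in Y_n$ with $B=B_{\boldsymbol a}$, whose monodromy data are $(r,s)$ and $(-r,-s)$ respectively. Because $(r,s)\notin\frac12\mathbb Z^2$, exactly one representative, say $\boldsymbol a$, satisfies $\sum_j a_j=r+s\tau$ (the other gives $-r-s\tau\neq r+s\tau$ in $E_\tau$). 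Running the computation of part (1) backwards, $\boldsymbol a\in Y_n$ supplies \eqref{ff3-1}, while the second equation of \eqref{de-rs} together with the reality of $(r,s)$ supplies $\sum_j\zeta(a_j)=r\eta_1+s\eta_2$, i.e.\ \eqref{ff4}; by Theorem~\ref{thm-CLW} this $\boldsymbol a$ is a nontrivial critical point of $G_n(\cdot;\tau)$ with the prescribed sum.

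The remaining uniqueness statement is, I expect, the main obstacle. Suppose $\boldsymbol a$ and $\boldsymbol a'$ are nontrivial critical points with $\sum_j a_j=\sum_j a'_j=r+s\tau$. By part (1) both carry the same refined monodromy datum $(r,s)$, so $y_{\boldsymbol a}$ and $y_{\boldsymbol a'}$ are Hermite--Halphen eigenfunctions of $\mathcal L_{n,B_{\boldsymbol a},\tau}$ and $\mathcal L_{n,B_{\boldsymbol a'},\tau}$ with identical multipliers. The difficulty is that for $n\geq2$ the two symmetric quantities $\sum_j a_j$ and $\sum_j\zeta(a_j)$ fixed by $(r,s)$ do not by themselves determine $\boldsymbol a$; one genuinely must rule out a second accessory parameter $B_{\boldsymbol a'}\neq B_{\boldsymbol a}$ producing the same $(r,s)$. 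To close this gap I would prove that, for fixed $\tau$, the map $B\mapsto(r,s)$ is injective on $\{B:\ell_n(B;\tau)\neq0\}$. Theorem~\ref{thm-rsbtau} already gives local injectivity---indeed \eqref{eq2-16} shows $(r_B,s_B)=\tfrac1{8\pi^2}(-\tau_s,\tau_r)\neq(0,0)$---so the task is to upgrade this to a global statement, which I would attempt either through the rigidity of the accessory parameter (for fixed $\tau$ and $n$ the Lam\'e equation admits no deformation in $B$ fixing the conjugacy class of its monodromy representation) or through a degree count along the degree-two covering $B\colon Y_n(\tau)\to\mathbb C$ of Theorem~\ref{CLWWW}(3). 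Once $B_{\boldsymbol a}=B_{\boldsymbol a'}$ is established, Theorem~\ref{CLWWW}(1) forces $\pm\boldsymbol a=\pm\boldsymbol a'$, and matching the sum $r+s\tau$ (rather than $-r-s\tau$) selects $\boldsymbol a=\boldsymbol a'$; this is the step where I would ultimately appeal to the detailed analysis of \cite{LW2}.
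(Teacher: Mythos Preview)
The paper does not prove this theorem in full; it is a cited result from \cite{LW2}. The paragraph immediately preceding the statement sketches part~(1), and your argument for~(1) is exactly that sketch: translate \eqref{ff4} via \eqref{G_z}--\eqref{realrs} into the second line of \eqref{de-rs}, so that the real pair $(r,s)$ from \eqref{sumaj} coincides with the monodromy data of $\mathcal L_{n,B_{\boldsymbol a},\tau}$, and then apply Theorems~\ref{CLWWW}(2) and~\ref{thm-premodular}.

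For part~(2) the paper gives no argument at all beyond ``The converse statement also holds, which has been proved by Lin and Wang \cite{LW2}.'' Your existence argument goes further than the paper and is correct: Theorem~\ref{thm-premodular} supplies $B$, and the hypothesis $(r,s)\notin\tfrac12\mathbb Z^2$ forces $\ell_n(B;\tau)\neq0$ (otherwise the monodromy would be $\pm I$), so Theorem~\ref{CLWWW} produces the pair $\pm\boldsymbol a\in Y_n$, one of which has the prescribed sum and is a nontrivial critical point by Theorem~\ref{thm-CLW}. You are right that the uniqueness assertion is the genuine content: it amounts to the global injectivity of $B\mapsto(r,s)$ on $\{B:\ell_n(B;\tau)\neq0\}$ for fixed $\tau$, which is not a consequence of the local statement \eqref{eq2-16}. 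This is precisely the step the paper outsources to \cite{LW2}, and your plan to ultimately appeal to that reference is the same as the paper's.
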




\section{A formula for the Hessian of nontrivial critical points}

This section is denoted to the proof of Theorem \ref{DD-conj0}. Fix any $\tau$ in this section.
Define
\[\Sigma_{n}(\tau):=\{B\in\mathbb{C} :  (\tau, B)\in\Sigma_n\}=\{B\in\mathbb{C} :  \ell_n(B;\tau)\neq 0\}.\]
Recall the hyperelliptic curve $Y_n(\tau)$ defined in \eqref{hyper}.
Then it follows from Theorem \ref{CLWWW} that $B\in \Sigma_{n}(\tau)$ can be a local coordinate for the hyperelliptic curve $Y_{\mathbf{n}}(\tau)$, namely for any $j$, $a_j=a_j(B)$ is locally holomorphic for $B\in \Sigma_{n}(\tau)$. From here and \eqref{de-rs}, i.e. \[\left \{
\begin{array}
[c]{l}%
r+s\tau=\sum_{j=1}^{n}a_j(B)\\
r\eta_{1}+s\eta_{2}=\sum_{j=1}^{n}\zeta(a_{j}(B)),
\end{array}
\right.\] we conclude that the corresponding $(r,s)=(r(B), s(B))$ is locally holomorphic for $B\in \Sigma_{n}(\tau)$. Of course, this fact can also follow directly from Theorem \ref{thm-rsbtau} and $(r(B), s(B))=(r(\tau,B), s(\tau, B))$ with fixed $\tau$, where $(r(\tau,B), s(\tau, B))$ is the map in Theorem \ref{thm-rsbtau}. Consequently,
\begin{equation}
\left \{
\begin{array}
[c]{l}%
d_0:=\sum_{j=1}^{n}a_{j}'(B)=r_B+s_B\tau \\
c_0:=-\sum_{j=1}^{n}\wp(a_{j})a_j'(B)=r_B\eta_{1}+s_B\eta_{2},
\end{array}
\right. \label{r1s2}%
\end{equation}
which is equivalent to (note $\tau\eta_1-\eta_2=2\pi i$)
\begin{align*}
s_B=\frac{c_0-d_0\eta_1}{-2\pi i},\quad r_B=\frac{c_0\tau-d_0\eta_2}{2\pi i}.
\end{align*}
This, together with \eqref{eq2-16} in Remark \ref{Rmk2-4}, implies
\begin{equation}\label{lem-7.3} \tau_r=4\pi i(c_0-d_0\eta_1),\quad
\frac{\tau_s}{\tau_r}=-\frac{r_B}{s_B}=\frac{c_0\tau-d_0\eta_2}{c_0-d_0\eta_1}
=\tau+\frac{2\pi i}{\frac{c_0}{d_0}-\eta_1}.
\end{equation}
Recall again that $B$ can be a local coordinate of $\boldsymbol{a}$ as long as $\boldsymbol{a}$ is not a branch point.

\begin{Proposition} \label{Jacobian} Consider the vector-valued map $(E_{\tau}^\times)^n \to \Bbb R^2$ defined by
$$
\boldsymbol{a} \mapsto \phi(\boldsymbol{a}):= -4\pi \sum_{j = 1}^n \nabla G(a_j).
$$
Let $\Sigma_n(\tau)\ni B= u + iv \mapsto \boldsymbol{a}(B) \in Y_n(\tau)\setminus\{\text{branch points}\}$ be the local holomorphic parametrization, where $u,v\in\mathbb R$. Then the Jacobian $J(\phi\circ \boldsymbol{a})(u, v)$ is given by
\begin{align}
\det D(\phi\circ \boldsymbol{a})=\det \Big( \frac{\p\phi}{\p u}, \frac{\p\phi}{\p v}\Big) &= -\frac{4|c_0 - d_0\eta_1|^2}{\operatorname{Im}\tau}\operatorname{Im}\left(\tau+\frac{2\pi i}{\frac{c_0}{d_0}-\eta_1}\right)\\
&=-\frac{1}{4\pi^2\operatorname{Im}\tau}|\tau_r|^2\operatorname{Im}
\left(\frac{\tau_s}{\tau_r}\right)\nonumber,
\end{align}
where $c_0, d_0$ and $\tau_r, \tau_s$ are seen in \eqref{r1s2}-\eqref{lem-7.3}. Note from  \eqref{eq2-16} that $(\tau_r, \tau_s)\neq (0,0)$.
\end{Proposition}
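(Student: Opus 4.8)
The plan is to compute the two partial derivatives $\partial_u\phi$ and $\partial_v\phi$ directly from the formula \eqref{G_z} for $\nabla G$, and then assemble the $2\times 2$ determinant. The starting point is that $\phi(\boldsymbol a) = -4\pi\sum_j\nabla G(a_j)$ can be read off from \eqref{G_z}: writing $a_j = x_j + iy_j$ and using $-4\pi G_z(z) = \zeta(z) - z\eta_1 + 2\pi i s$ with $\sum_j a_j = r + s\tau$, $r,s\in\mathbb R$, one gets that $\phi$ is essentially the pair $(\operatorname{Re}, \operatorname{Im})$ of the single holomorphic-in-$\boldsymbol a$ quantity $\Phi(\boldsymbol a):=\sum_j\zeta(a_j) - \eta_1\sum_j a_j + 2\pi i s(\boldsymbol a)$; more precisely, $\phi$ is obtained from $\Phi$ by the standard identification $\mathbb C\cong\mathbb R^2$, since $\nabla G$ corresponds to $2G_z$ under $\nabla \leftrightarrow 2\partial_z$. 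The key point is that $s = s(\boldsymbol a)$ is the \emph{real part} combination: from $\sum a_j = r+s\tau$ we have $s = \operatorname{Im}(\sum a_j)/\operatorname{Im}\tau$, which is real-linear but not holomorphic in $\boldsymbol a$, so $\Phi$ is \emph{not} holomorphic — this is exactly the source of the $\operatorname{Im}\tau$ in the denominator.

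First I would differentiate along the holomorphic curve $B\mapsto\boldsymbol a(B)$. Since each $a_j(B)$ is holomorphic, $\frac{d}{dB}\sum_j\zeta(a_j) = -\sum_j\wp(a_j)a_j'(B) = c_0$ and $\frac{d}{dB}\sum_j a_j = d_0$ by \eqref{r1s2}. For the non-holomorphic term $2\pi i s$, I use $s = \operatorname{Im}(\sum a_j)/\operatorname{Im}\tau = (\sum a_j - \overline{\sum a_j})/(2i\operatorname{Im}\tau)$, so that $\partial_u s = \operatorname{Im}(d_0)/\operatorname{Im}\tau$ while $\partial_v s$ picks up a factor $i$: $\partial_v s = \operatorname{Im}(i d_0)/\operatorname{Im}\tau = \operatorname{Re}(d_0)/\operatorname{Im}\tau$. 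Assembling, $\partial_u\Phi = c_0 - d_0\eta_1 + 2\pi i\,\operatorname{Im}(d_0)/\operatorname{Im}\tau$ and $\partial_v\Phi = i(c_0 - d_0\eta_1) + 2\pi i\,\operatorname{Re}(d_0)/\operatorname{Im}\tau$. Writing $w := c_0 - d_0\eta_1 = \tau_r/(4\pi i)$ (by \eqref{lem-7.3}), the columns of $D(\phi\circ\boldsymbol a)$ are the real/imaginary parts of $\partial_u\Phi$ and $\partial_v\Phi$.

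The determinant of a $2\times 2$ real matrix whose columns are the $\mathbb R^2$-images of complex numbers $\alpha,\beta$ equals $\operatorname{Im}(\bar\alpha\beta)$. So $\det D(\phi\circ\boldsymbol a) = \operatorname{Im}\big(\overline{\partial_u\Phi}\,\partial_v\Phi\big)$. Expanding with $\partial_u\Phi = w + \frac{2\pi i\operatorname{Im} d_0}{\operatorname{Im}\tau}$, $\partial_v\Phi = iw + \frac{2\pi i\operatorname{Re} d_0}{\operatorname{Im}\tau}$: the cross terms combine, and using $\operatorname{Im}(\bar w\cdot iw) = |w|^2$ together with $\overline{w}\cdot\frac{2\pi i\operatorname{Re} d_0}{\operatorname{Im}\tau} + \overline{\tfrac{2\pi i\operatorname{Im} d_0}{\operatorname{Im}\tau}}\cdot iw$ — after taking imaginary parts and simplifying — everything reorganizes into $|w|^2$ times a factor involving $\operatorname{Im}(c_0/d_0 - \eta_1)$ or equivalently $\operatorname{Im}(\tau + \tfrac{2\pi i}{c_0/d_0 - \eta_1})$ by \eqref{lem-7.3}. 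The main obstacle — and the only genuinely delicate step — is bookkeeping this last algebraic collapse: one must carefully track that the $\frac{1}{\operatorname{Im}\tau}$ prefactor survives exactly once and that the bracket matches $\operatorname{Im}(\tau_s/\tau_r)$ rather than its reciprocal. It is cleanest to verify it by substituting $c_0 = d_0(\eta_1 + \tfrac{2\pi i}{c_0/d_0 - \eta_1})$... in fact more simply: factor $|d_0|^2$ out of $\operatorname{Im}(\overline{\partial_u\Phi}\,\partial_v\Phi)$ after dividing through, reducing the whole identity to a one-line computation with $\zeta := c_0/d_0$, and then re-express via \eqref{lem-7.3}. The final substitution $w = \tau_r/(4\pi i)$, hence $|w|^2 = |\tau_r|^2/(16\pi^2)$, and $c_0 - d_0\eta_1 = w$ yield the two displayed forms of the answer, the second following from the first purely by \eqref{lem-7.3}.
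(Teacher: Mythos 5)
There is a genuine error in your identification step, and it changes the answer by a factor of $-4$ (in particular it flips the sign, which is exactly the content of the Proposition and later feeds the sign in Theorem \ref{DD-conj0}). You claim that under the standard identification $\mathbb{C}\cong\mathbb{R}^2$ the map $\phi$ is the pair $(\operatorname{Re},\operatorname{Im})$ of $\Phi=\sum_j\zeta(a_j)-\eta_1\sum_j a_j+2\pi i s$, ``since $\nabla G$ corresponds to $2G_z$''. But for a real-valued $G$ one has $2G_z=G_x-iG_y$, so $\nabla G=(2\operatorname{Re}G_z,\,-2\operatorname{Im}G_z)$, i.e.\ $\nabla G$ corresponds to $2\overline{G_z}$, not $2G_z$; this is exactly how the paper writes it. Consequently $\phi_1+i\phi_2=2\overline{\Phi}$ (note both the conjugation and the factor $2$), and the correct determinant is
\[
\det D(\phi\circ\boldsymbol{a})=-4\,\operatorname{Im}\!\left(\overline{\partial_u\Phi}\;\partial_v\Phi\right),
\]
whereas your rule ``columns are the $\mathbb{R}^2$-images of $\partial_u\Phi,\partial_v\Phi$, so $\det=\operatorname{Im}(\overline{\partial_u\Phi}\,\partial_v\Phi)$'' drops the orientation-reversing conjugation (sign) and the factor $2$ in each column (factor $4$). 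As written, your computation yields $+\frac{|c_0-d_0\eta_1|^2}{\operatorname{Im}\tau}\operatorname{Im}\big(\tau+\frac{2\pi i}{c_0/d_0-\eta_1}\big)$, i.e.\ $-\tfrac14$ of the stated formula, so the conclusion you reach is not the Proposition.

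Everything else is sound and is essentially the paper's own proof in complexified notation: your formulas $\partial_u\Phi=c_0-d_0\eta_1+\frac{2\pi i\operatorname{Im}d_0}{\operatorname{Im}\tau}$ and $\partial_v\Phi=i(c_0-d_0\eta_1)+\frac{2\pi i\operatorname{Re}d_0}{\operatorname{Im}\tau}$ are correct (the paper computes the same four real partials via the Cauchy--Riemann equations), and the ``algebraic collapse'' you worry about is routine: with $w=c_0-d_0\eta_1$ one gets $\operatorname{Im}(\overline{\partial_u\Phi}\,\partial_v\Phi)=|w|^2+\frac{2\pi}{\operatorname{Im}\tau}\operatorname{Re}\big(\overline{d_0}\,w\big)=\frac{|w|^2}{\operatorname{Im}\tau}\operatorname{Im}\big(\tau+\frac{2\pi i}{c_0/d_0-\eta_1}\big)$, which is exactly the paper's intermediate identity. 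So the fix is local: replace the identification by $\phi_1+i\phi_2=2\overline{\Phi}$, carry the resulting $-4$, and then the two displayed forms follow from \eqref{lem-7.3} as you indicate.
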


\begin{proof} This proof is just by direct computations as in \cite{LW-CAG} where the case of $\boldsymbol{a}$ being branch points of $Y_n$ was considered (The difference is that near a branch point, $B$ can not be a local coordinate but $C$ is, where $C^2=\ell_n(B;\tau)$).

 Indeed,
denote $a_j = x_j +i y_j$ with $x_j, y_j\in\mathbb{R}$, $b=\operatorname{Im}\tau$ and $\phi = (\phi_1, \phi_2)^T$. Then
\[\sum_jx_j+i\sum_{j}y_j=\sum_ja_j=r+s\tau=r+s(\operatorname{Re}\tau+ib)\]
implies $s=\frac1b \sum_{j}y_j$.
 By (\ref{G_z}) and $\nabla G=(2\operatorname{Re}\frac{\partial G}{\partial z}, -2\operatorname{Im}\frac{\partial G}{\partial z})$, we have
\begin{equation} \label{grad-phi}
\begin{split}
\phi_1 &= 2\,{\rm Re}\,(\sum_j \zeta(a_j) - \eta_1 a_j),\\
\phi_2 &= -2\,{\rm Im}\,(\sum_j \zeta(a_j) - \eta_1 a_j) - \frac{4\pi}{b} \sum_j y_j.
\end{split}
\end{equation}
Recalling $(c_0, d_0)$ defined in \eqref{r1s2}, it follows from the chain rule, the Cauchy-Riemann equation and $\zeta'=-\wp$ that
{\allowdisplaybreaks
\begin{align*}
\p_u \phi_1 &= -2\,{\rm Re}\Big[\sum_j (\wp(a_j) + \eta_1)a_j'(B)\Big] = 2\,{\rm Re}\,(c_0 - d_0\eta_1),\\
\p_v \phi_1 &= 2\,{\rm Im}\Big[\sum_j (\wp(a_j) + \eta_1) a_j'(B)\Big] = -2\,{\rm Im}\,(c_0 - d_0\eta_1),\\
\p_u \phi_2 &= 2\,{\rm Im}\Big[\sum_j (\wp(a_j) + \eta_1) a_j'(B) \Big] - \frac{4\pi}{b} \sum_j \frac{\p y_j}{\p u} \\
&= -2\,{\rm Im}\,(c_0 - d_0\eta_1) - \frac{4\pi}{b}{\rm Im}\,d_0, \\
\p_v \phi_2 &= -2\,{\rm Re}\Big[\sum_j (\wp(a_j) + \eta_1) a_j'(B)\Big] - \frac{4\pi}{b} \sum_j \frac{\p y_j}{\p v} \\
&= -2\,{\rm Re}\,(c_0 - d_0\eta_1) - \frac{4\pi}{b} {\rm Re}\,d_0.
\end{align*}
}%
Hence the Jacobian is given by
\begin{equation*}
\begin{split}
&\quad\det \Big( \frac{\p\phi}{\p u}, \frac{\p\phi}{\p v}\Big)\\
 &=-4\left(|c_0 - d_0\eta_1|^2 + \frac{2\pi}{b} ({\rm Re}\,(c_0 - d_0\eta_1)\, {\rm Re}\,d_0 + {\rm Im}\,(c_0 - d_0\eta_1)\, {\rm Im}\,d_0)\right) \\
&= - 4\Big(|c_0 - d_0\eta_1|^2 + \frac{2\pi}{b} {\rm Re}\,\overline{d_0}(c_0 - s\eta_1)\Big)\\
&=-\frac{4|c_0 - d_0\eta_1|^2}{\operatorname{Im}\tau}\operatorname{Im}\bigg(\tau+\frac{2\pi i}{\frac{c_0}{d_0}-\eta_1}\bigg).
\end{split}
\end{equation*}
The proof is complete by applying \eqref{lem-7.3}.
\end{proof}

Now we are ready to prove Theorem \ref{DD-conj0}

\begin{theorem}[=Theorem \ref{DD-conj0}] \label{DD-conj}
Fix any $n \in \Bbb N^*$ and $\tau$. Suppose $\boldsymbol{p} = \{p_1, \cdots, p_n\} \in Y_n(\tau)$ is a nontrivial critical point of $G_n$, then there is a constant $c_{\boldsymbol{p}} >0 $ such that
\begin{equation}\label{eq-ex-non}
\det D^2 G_n(\boldsymbol{p}) = \frac{(-1)^n n^2}{4(2\pi)^{2n+2} \operatorname{Im}\tau} c_{\boldsymbol{p}} |\tau_r|^2\operatorname{Im}
\left(\frac{\tau_s}{\tau_r}\right).
\end{equation}
In particular, $\boldsymbol{p}$ is a degenerate critical point of $G_n$ if and only if
$|\tau_r|^2\operatorname{Im}
(\frac{\tau_s}{\tau_r})=0$, namely
\[\frac{\tau_s}{\tau_r}\in\mathbb{R}\cup\{\infty\}.\]
\end{theorem}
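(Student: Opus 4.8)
The plan is to compute $\det D^2 G_n(\boldsymbol{p})$ by relating the Hessian of $G_n$ to the Jacobian computed in Proposition \ref{Jacobian}. The key structural observation is that the critical point equations for $G_n$ split into two pieces of very different nature: the $n-1$ independent equations \eqref{ff3-1} (equivalently $\boldsymbol{a}\in Y_n$) which single out the hyperelliptic curve, and the two real equations $\sum_j\nabla G(a_j)=0$ encoded in the map $\phi$. So I would choose coordinates on $(E_\tau^\times)^n$ adapted to this splitting: near the nontrivial critical point $\boldsymbol{p}$, use the local holomorphic parameter $B=u+iv$ along $Y_n(\tau)$ together with $2n-2$ transverse real coordinates describing the normal directions to $Y_n$ inside $(E_\tau^\times)^n$. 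In these coordinates the restriction of $G_n$ to $Y_n(\tau)$ has gradient exactly $\phi\circ\boldsymbol a$ (up to the harmless factor $-4\pi$), so the $2\times 2$ block of $D^2 G_n$ in the $(u,v)$ directions, computed \emph{at a critical point}, is the Jacobian $D(\phi\circ\boldsymbol a)$ from Proposition \ref{Jacobian}; this is where the factor $|\tau_r|^2\operatorname{Im}(\tau_s/\tau_r)$ enters.

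Next I would argue that the full $2n\times 2n$ Hessian factors (at the critical point) as a block determinant: the $(u,v)$--$(u,v)$ block equals $-\tfrac{1}{4\pi}D(\phi\circ\boldsymbol a)$ and the complementary $(2n-2)\times(2n-2)$ block is the Hessian of $G_n$ restricted to the transverse directions, with the off-diagonal blocks contributing nothing to the leading determinant because along $Y_n(\tau)$ the equations \eqref{ff3-1} hold identically, i.e. the tangential derivatives of the normal constraints vanish. Concretely, one writes $G_n$ near $\boldsymbol p$ using the constraint functions $F_j(\boldsymbol a):=\sum_{k\neq j}(\zeta(a_j-a_k)+\zeta(a_k)-\zeta(a_j))$, which vanish on $Y_n$, and the two functions $\phi_1,\phi_2$; the claim is that $\det D^2 G_n(\boldsymbol p)$ equals (a nonzero universal constant times) $\det D(\phi\circ\boldsymbol a)(u,v)$ times a strictly positive quantity $c_{\boldsymbol p}$ coming from the transverse block. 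Then combining with the formula from Proposition \ref{Jacobian},
\[
\det D^2 G_n(\boldsymbol p)= (\text{const})\cdot c_{\boldsymbol p}\cdot\Big(-\frac{1}{4\pi^2\operatorname{Im}\tau}|\tau_r|^2\operatorname{Im}\tfrac{\tau_s}{\tau_r}\Big),
\]
and matching the power of $2\pi$ and the sign $(-1)^n n^2$ — which should emerge from carefully tracking the $-4\pi$'s and the permutation/antisymmetrization structure of the derivatives $\partial^2 G(a_i-a_j)$ — yields \eqref{eq-ex-non}. The positivity of $c_{\boldsymbol p}$ is the assertion that the transverse block is sign-definite; I expect this to follow because $G_n$ restricted to the fiber of $B$ (the normal slice) is, up to sign, strictly convex or concave at $\boldsymbol p$, reflecting that the only ``degenerate'' direction of $G_n$ at a nontrivial critical point is the two-dimensional one parametrized by $(r,s)$, exactly as predicted by Theorem \ref{thm-rsbtau}.

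The main obstacle will be justifying the block-decomposition rigorously and computing the transverse block well enough to see it is nonzero with a definite sign — in other words, proving $c_{\boldsymbol p}>0$ rather than merely $c_{\boldsymbol p}\neq 0$. This requires understanding the second-order behavior of $G_n$ in the $2n-2$ directions transverse to $Y_n(\tau)$, which are governed by the derivatives of the $F_j$; one likely route is to use the fact (from Theorem \ref{CLWWW}(3)) that $Y_n(\tau)$ is a smooth hyperelliptic curve away from branch points, so the constraints $F_j=0$ have maximal rank differential there, and then to show that the second fundamental form of $G_n$ in the normal bundle is, up to the positive scalar $c_{\boldsymbol p}$, the identity — plausibly by relating it to $|\wp'(p_j)|^2$-type quantities (note \eqref{eq-ex-aj} guarantees $\wp'(p_j)\neq 0,\infty$), which are manifestly positive. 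Once the decomposition and the sign of $c_{\boldsymbol p}$ are in hand, the equivalence ``$\boldsymbol p$ degenerate $\iff |\tau_r|^2\operatorname{Im}(\tau_s/\tau_r)=0$'' is immediate, and since $(\tau_r,\tau_s)\neq(0,0)$ by \eqref{eq2-16}, the condition reduces to $\operatorname{Im}(\tau_s/\tau_r)=0$, i.e. $\tau_s/\tau_r\in\mathbb R\cup\{\infty\}$.
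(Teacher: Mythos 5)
Your proposal correctly identifies the two ingredients (the splitting of the critical-point system into the $Y_n$-equations \eqref{ff3-1} and the equation $\phi=0$, and Proposition \ref{Jacobian} as the source of the factor $|\tau_r|^2\operatorname{Im}(\tau_s/\tau_r)$), but the central step is not justified and, as described, would fail. You propose to block-factor the \emph{Hessian} $D^2G_n$ in coordinates adapted to $Y_n$, discarding the tangential--transverse blocks ``because along $Y_n$ the equations \eqref{ff3-1} hold identically.'' That reason controls the wrong object: the identical vanishing of the constraint functions along $Y_n$ is a first-order statement about the constraint map and says nothing about the mixed second derivatives of $G_n$, which in general do not vanish at $\boldsymbol p$. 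Without killing those blocks, $\det D^2G_n$ is the transverse determinant times a Schur complement, not times the naive $(u,v)$-block; and the manifold relevant to the standard Morse--Bott/Lyapunov--Schmidt reduction (points where the transverse gradient of $G_n$ vanishes) is not $Y_n$, since on $Y_n$ one only has $\nabla_jG_n=\tfrac1{4\pi}\phi$ for all $j$, which is nonzero away from $\boldsymbol p$. Even your identification of the tangential block is off: at $\boldsymbol p$ the Hessian of $G_n\circ\boldsymbol a(B)$ is $\tfrac1{4\pi}$ times $D(\phi\circ\boldsymbol a)$ \emph{composed with} the differential of $B\mapsto\sum_ja_j(B)$, not $D(\phi\circ\boldsymbol a)$ itself. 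The paper avoids all of this by never decomposing the Hessian: it replaces $\nabla G_n$ by the equivalent map $g=-2\pi A\,\nabla G_n$ with $A$ an explicit constant invertible matrix (this is where the exact factor $(-1)^{n-1}(2\pi)^{2n}/n^2$ in \eqref{J(g)} comes from), whose first $n-1$ components \eqref{zeta-eqn} are holomorphic and vanish identically on $Y_n$; then the chain rule \eqref{der-level} plus one column operation $R_n$ (legitimate since $a_n'(B_{\boldsymbol p})\neq0$ by \eqref{b-a} and \eqref{eq-ex-aj}) makes the \emph{Jacobian} $Dg$ block-triangular with $\tfrac12D(\phi\circ\boldsymbol a)$ in the corner.

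The second gap is $c_{\boldsymbol p}>0$. You hope the transverse block of $D^2G_n$ is sign-definite (``$G_n$ is strictly convex or concave on the normal slice''); this is an unproven and substantially harder claim, and since you do not determine which sign, your route would not even recover the factor $(-1)^n$ in \eqref{eq-ex-non}. In the paper, positivity costs nothing once the Jacobian reformulation is in place: the relevant subdeterminant is $\det D'g=|\det D'^{\mathbb C}g|^2\ge0$ because $g^1,\dots,g^{n-1}$ are holomorphic, and its nonvanishing follows from a linear-algebra argument — $\boldsymbol p$ is a smooth point of $Y_n$, so $D^{\mathbb C}\tilde g(\boldsymbol p)$ has rank $n-1$, its kernel is spanned by $(a_1'(B_{\boldsymbol p}),\dots,a_n'(B_{\boldsymbol p}))\neq0$, and the cofactor vector is a nonzero multiple of it, giving \eqref{cp-expression} with $c_{\boldsymbol p}=|c|^2>0$. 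No convexity of $G_n$ in transverse directions is used, or needed. So the proposal as written has genuine gaps precisely at the reduction of the $2n\times2n$ determinant and at the sign/positivity of the transverse factor; the missing idea is the passage from the Hessian to the Jacobian of the holomorphically-structured map $g$.
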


\begin{proof} First we consider the $n=1$, where $G_1(z)=-G(z)$. If $p$ is a nontrivial critical point of $G$, i.e. $p\neq -p$ in $E_{\tau}$ and so $\wp'(p)\notin \{0,\infty\}$, then $B=\wp(p)$ and a direct computation via (\ref{G_z}) leads to
\[\det D^2 G(p)=-\frac{|\eta_1+B|^2}{4\pi^2 \operatorname{Im}\tau}\operatorname{Im}\left(\tau-\frac{2\pi i}{\eta_1+B}\right).\]
Since it was computed in \cite[Section 5.1]{CL-AIM25} that
\[\tau_r=-4\pi i\frac{\eta_1+B}{\wp'(p)},\quad \frac{\tau_s}{\tau_r}=\tau-\frac{2\pi i}{\eta_1+B},\]
we obtain (\ref{eq-ex-non}) with
\[c_p=\frac{|\wp'(p)|^2}{64\pi^4\operatorname{Im}\tau}>0.\]

Now we consider the general case $n\geq 2$. Note that $D^2 G_n$ is a $2n \times 2n$ matrix and it seems impossible to compute $\det D^2 G_n(p)$ directly. Here we mainly follow the argument of \cite[Theorem 4.1]{LW-CAG} where Lin and Wang studied the Hessian of $G_n$ at trivial critical points $\boldsymbol a=-\boldsymbol a$. Note that different ideas are needed here due to the difference between nontrivial critical points and trivial critical points.

As pointed out in (\ref{ff3-1})-(\ref{ff4}), the system of equations (\ref{poho}) is equivalent to holomorphic equations  $g^1(\boldsymbol{a}) =\cdots = g^{n - 1}(\boldsymbol{a}) = 0$ with
\begin{equation} \label{zeta-eqn}
g^j(\boldsymbol{a}): = \sum_{k\ne j}^n (\zeta(a_j - a_k) + \zeta(a_k) - \zeta(a_j)),\quad 1\le j\le n-1,
\end{equation}
which defines the hyperelliptic curve $Y_n$ (see (\ref{hyper})), and the non-holomorphic equation $g^n(\boldsymbol{a}) = 0$ with
\begin{equation}\label{g-n}
g^n(\boldsymbol{a}) := \tfrac{1}{2}\phi(\boldsymbol{a}) = -2\pi \sum_{j = 1}^n \nabla G(a_j).
\end{equation}
Applying (\ref{G_z}), we easily obtain for $1\le j\le n-1$,
\begin{equation}\label{gia}
g^{j}(\boldsymbol{a})=-2\pi\left(\sum_{k\neq j}2G_z(a_j-a_k)-2nG_z(a_j)+\sum_{k=1}^n 2G_z(a_k)\right).
\end{equation}
Recall (\ref{poho}) that
$$
\nabla_j G_n(\boldsymbol{a})=\sum_{k\neq j}\nabla G(a_j-a_k)-n\nabla G(a_j),\quad \forall\,j.
$$
Then by taking into account that $\nabla G \mapsto 2G_z$ has matrix $\bigl(\begin{smallmatrix}1 & 0\\
0 & -1\end{smallmatrix}\bigr)$ and $$g^n = \frac{2\pi}{n}\sum_{j = 1}^n \nabla_{j} G_n,$$ we see that the equivalence between the map $g(\boldsymbol{a}) := (g^1(\boldsymbol{a}), \cdots, g^n(\boldsymbol{a}))^T$ and $-2\pi\nabla G_n(\boldsymbol{a})=-2\pi(\nabla_1 G_n(\boldsymbol{a}),\cdots,\nabla_n G_n(\boldsymbol{a}))^{T}$ is induced by a real $2n \times 2n$ matrix $A$ given by
\begin{equation*}
A= \begin{bmatrix}
1 & & & & & 1 &  \\
& 1 & & & & & -1 \\
& & \ddots & & & \vdots & \vdots\\
& & & 1& & 1 &  \\
& & & & 1& & -1 \\
& & & & & 1 &  \\
& & & & & & 1
\end{bmatrix}
\cdot
\begin{bmatrix}
1 & & & & &  &  \\
& -1 & & & & &  \\
& & \ddots & & & & \\
& & & 1& &  &  \\
& & & & -1& &  \\
\tfrac{-1}{n}& & \cdots& \tfrac{-1}{n}& & \tfrac{-1}{n} &  \\
&\tfrac{-1}{n} & \cdots& & \tfrac{-1}{n}& & \tfrac{-1}{n}
\end{bmatrix}.
\end{equation*}
In other words, by considering
$g^{k}=({\rm Re}g^{k}, {\rm Im}g^{k})^T$ for $1\le k\le n-1$ and $G_z=({\rm Re}G_z, {\rm Im}G_z)^T$, we have $2G_z=\bigl(\begin{smallmatrix}1 & 0\\
0 & -1\end{smallmatrix}\bigr)\nabla G$. Inserting this into (\ref{gia}), it is easy to obtain $$g(\boldsymbol{a})=-2\pi A \nabla G_n(\boldsymbol{a}).$$
Consequently,
\begin{equation} \label{J(g)}
J(g)(\boldsymbol{a}) = J(-2\pi A\nabla G_n)(\boldsymbol{a}) = \frac{(-1)^{n - 1}}{n^2} (2\pi)^{2n}\det D^2 G_n(\boldsymbol{a}),
\end{equation}
so it suffices for us to compute the real Jacobian $J(g)$.

Let $\boldsymbol{p} \in Y_n $ be a nontrivial critical point of $G_n$ and consider the holomorphic parametrization $B \mapsto \boldsymbol{a}(B)$ of $Y_n$ near $\boldsymbol{p}$, where $\boldsymbol{p}$ corresponds to $B = B_{\boldsymbol{p}}$.
Denote
$$
B= u + i v, \quad a_k = x^k + i y^k, \quad g^k = U^k + i V^k, \quad 1 \le k \le n.
$$
Along $Y_n$ (i.e. $g^1(\boldsymbol{a}) =\cdots = g^{n - 1}(\boldsymbol{a}) = 0$), it follows that (denote $g^j_k = \p g^j/\p a_k$)
\begin{equation} \label{der-level}
0 = \frac{\p g^j}{\p B} = \sum_{k = 1}^n g^j_k\, a_k'(B), \qquad 1 \le j \le n - 1.
\end{equation}

Note that $g^n = \tfrac{1}{2}\phi$ is not holomorphic (see (\ref{grad-phi}) for the additional linear term $-2\pi \sum_k y^k/b$), so we need to work with the real components $U^k, V^k$ and real variables $x^k, y^k$ and $u, v$ instead.
More precisely, note that (\ref{der-level}) takes the real form: For $1 \le j \le n - 1$,
\begin{equation} \label{der-level2}
\begin{split}
0 = \begin{bmatrix} U^j_u & U^j_v \\ V^j_u & V^j_v \end{bmatrix}
= \sum_{k = 1}^n
\begin{bmatrix} U^j_{x^k} & U^j_{y^k} \\ V^j_{x^k} & V^j_{y^k} \end{bmatrix} \begin{bmatrix} x^k_u & x^k_v \\ y^k_u & y^k_v \end{bmatrix}.
\end{split}
\end{equation}
The two rows are equivalent by the Cauchy--Riemann equation.

Since (\ref{b-a}) gives
\[1=(2n-1)\sum_j \wp'(a_j)a_j'(B),\]
and (\ref{eq-ex-aj}) says $\wp'(a_j)\neq 0,\infty$ for all $j$,
there exists $k$ such that
\begin{equation}\label{a-k'}
a_k'(B_{\boldsymbol{p}})\neq 0.
\end{equation}
By renaming $a_1,\cdots, a_n$ if necessarily, we may assume $k=n$, i.e. $a_n'(B_{\boldsymbol{p}})\neq 0$.
Then the elementary column operation on the $2n \times 2n$ real jacobian matrix $Dg$ is now replaced by the right multiplication with the matrix
\begin{equation*}
R_n := \begin{bmatrix}
1 & & & x^1_u & x^1_v \\
& 1 & & y^1_u & y^1_v \\
& & \ddots & \vdots & \vdots\\
& & & x^n_u & x^n_v \\
& & & y^n_u & y^n_v
\end{bmatrix},
\end{equation*}
because
$$
\begin{vmatrix} x^n_u & x^n_v \\ y^n_u & y^n_v \end{vmatrix} = |a'_n(B_{\boldsymbol{p}})|^2 \ne 0.
$$

Denote by $D'g$ the principal $2(n - 1) \times 2(n - 1)$ sub-matrix of $Dg$. Notice from (\ref{g-n}) that
$$
\tfrac{1}{2} D(\phi\circ \boldsymbol{a}) = \begin{bmatrix} U^n_u & U^n_v \\ V^n_u & V^n_v \end{bmatrix}
= \sum_{k = 1}^n
\begin{bmatrix} U^n_{x^k} & U^n_{y^k} \\ V^n_{x^k} & V^n_{y^k} \end{bmatrix} \begin{bmatrix} x^k_u & x^k_v \\ y^k_u & y^k_v \end{bmatrix},
$$
which is precisely the right bottom $2 \times 2$ sub-matrix of $(Dg)R_n$. Hence it follows from (\ref{der-level2}) that
$$
(Dg)R_n=\begin{bmatrix} D'g & 0 \\ * & \tfrac{1}{2} D(\phi\circ \boldsymbol{a}) \end{bmatrix},
$$
which can be used to calculate the determinant:
$$
\det Dg\, \det R_n = \det ((Dg)R_n) =\tfrac{1}{4} \det D' g\, \det D(\phi\circ \boldsymbol{a}).
$$
By (\ref{J(g)}), $\det Dg=J(g)(\boldsymbol{a})$ and Proposition \ref{Jacobian}, we get
\begin{equation*}
\det D^2 G_n(p) = \frac{(-1)^n n^2}{4 (2\pi)^{2n+2}{\rm Im}\,\tau} \frac{|\det D'^{\Bbb C} g(\boldsymbol{p})|^2}{|a_n'(B_{\boldsymbol{p}})|^2} |\tau_r|^2\operatorname{Im}\left(\frac{\tau_s}{\tau_r}\right),
\end{equation*}
where $D'^{\Bbb C} g$ denotes the principal $(n - 1) \times (n - 1)$ sub-matrix of the $n\times n$ complex-valued matrix $D^{\Bbb C}g$. Here we have used $\det D' g(\boldsymbol{p})=|\det D'^{\Bbb C} g(\boldsymbol{p})|^2$ because $g^k$ is holomorphic for any $1\le k\le n-1$.

It remains to prove that
\begin{equation} \label{cp-expression}
c_{\boldsymbol{p}} :=  \frac{|\det D'^{\Bbb C} g(\boldsymbol{p})|^2}{|a_n'(B_{\boldsymbol{p}})|^2} > 0.
\end{equation}
Denote $\tilde{g}:=(g^1,\cdots,g^{n-1})^T$.
Since $g^1 = 0, \cdots, g^{n - 1} = 0$ are the defining equations for $Y_n$ and $\boldsymbol{p} \in Y_n$ is a smooth point, then there is some $(n - 1) \times (n  - 1)$ sub-matrix of the $(n - 1) \times n$ matrix $D^{\mathbb{C}}\tilde{g}(\boldsymbol{a})$ such that its determinant is not zero at $\boldsymbol{p}$, i.e. the rank of $D^{\mathbb{C}}\tilde{g}(\boldsymbol{p})$ is $n-1$, so the dimension of the solution space of
\begin{equation}\label{eq-ex-linear}D^{\mathbb{C}}\tilde{g}(\boldsymbol{p})\cdot X=0,\quad X\in \mathbb{C}^n\end{equation}
is $1$. On the other hand, by letting $b_i$ equal to $(-1)^{1+i}$ times the determinant of the $(n-1)\times(n-1)$ sub-matrix of $D^{\mathbb{C}}\tilde{g}(\boldsymbol{p})$ without the $i$-th column, we have that
$(b_1,\cdots, b_n)^T$ $\neq (0,\cdots, 0)^T$ solves  (\ref{eq-ex-linear}). Since (\ref{der-level}) and \eqref{a-k'} say that $$(a_1'(B_{\boldsymbol{p}}), \cdots, a_n'(B_{\boldsymbol{p}}))^{T}\neq (0,\cdots, 0)^T$$ also solves (\ref{eq-ex-linear}), there exists a constant $c\neq 0$ such that
\[(b_1,\cdots, b_n)^T=c(a_1'(B_{\boldsymbol{p}}, \cdots, a_n'(B_{\boldsymbol{p}}))^{T}.\]
From here and $b_n=(-1)^{1+n}\det D'^{\Bbb C} g(\boldsymbol{p})$, we finally obtain \[c_{\boldsymbol{p}}=\frac{|b_n|^2}{|a_n'(B_{\boldsymbol{p}})|^2}=|c|^2>0.\]
The proof is complete.
\end{proof}

\begin{example} Let $n=2$ and suppose that $\boldsymbol{p}=(p_1,p_2)$ is a nontrivial critical point of $G_2$ with $B=3(\wp(p_1)+\wp(p_2))$. Since it was computed in \cite[Section 5.2]{CL-AIM25} that
\[\tau_r=-2\pi i\frac{B^2+3\eta_1B-\frac{3}{2}g_2}{\sqrt{\ell_2(B)}},\]
\[\frac{\tau_s}{\tau_r}=\tau-\frac{6\pi i B}{B^2+3\eta_1B-\frac{3}{2}g_2},\]
where
$$\ell_2(B)=(B^2-3g_2)\left(B^3-\frac94g_2B+\frac{27}{4}g_3\right)\neq 0,$$
we obtain
\[\det D^2 G_2(\boldsymbol{p}) = \frac{c_{\boldsymbol{p}} |B^2+3\eta_1B-\frac{3}{2}g_2|^2}{(2\pi)^{4} |\ell_2(B)|\operatorname{Im}\tau }\operatorname{Im}
\left(\tau-\frac{6\pi i B}{B^2+3\eta_1B-\frac{3}{2}g_2}\right).\]
In particular, $\boldsymbol{p}$ is a degenerate critical point of $G_2$ if and only if
\[\frac{\tau_s}{\tau_r}=\tau-\frac{6\pi i B}{B^2+3\eta_1B-\frac{3}{2}g_2}\in\mathbb{R}\cup\{\infty\}.\]
\end{example}

\section{Non-degeneray of critical points}

This final section is devoted to the proof of Theorem \ref{main-thm-1}. 
For any $\gamma=\bigl(\begin{smallmatrix}a & b\\
c & d\end{smallmatrix}\bigr)
\in SL(2,\mathbb{Z})$, we consider the Mobius transformation
$$\gamma(\tau):=\frac{a\tau+b}{c\tau+d}.$$
Recall that $\Gamma_{0}(2)$ is the congruence subgroup of $SL(2,\mathbb{Z})$ defined by
\[
\Gamma_{0}(2):=\left \{  \left.
\bigl(\begin{smallmatrix}a & b\\
c & d\end{smallmatrix}\bigr)
\in SL(2,\mathbb{Z})\right \vert c\equiv0\text{ }\operatorname{mod}2\right \},
\]
and $F_{0}$ is the basic fundamental domain of $\Gamma
_{0}(2)$ given by
\[
F_{0}:=\{ \tau \in \mathbb{H}\ |\ 0\leqslant  \text{Re}  \tau \leqslant
1\  \text{and}\ |\tau-\tfrac{1}{2}|\geqslant \tfrac{1}{2}\}.
\]
Then for any $\gamma\in \Gamma_0(2)$,
\[\gamma(F_0):=\{\gamma(\tau)\;:\; \tau\in F_0\}\]
is also a fundamental domain of $\Gamma
_{0}(2)$ and
\[\mathbb{H}=\bigcup_{\gamma\in \Gamma_0(2)}\gamma(F_0).\]

\begin{Remark}\label{Rmk4-1}
Fix any $\gamma=\bigl(\begin{smallmatrix}a & b\\
c & d\end{smallmatrix}\bigr)
\in \Gamma
_{0}(2)$, we denote $\tilde{\tau}=\gamma(\tau)=\frac{a\tau+b}{c\tau+d}$ for convenience. Since $ad-bc=1$ implies $$\mathbb Z+\mathbb{Z}\tau=\mathbb{Z}(c\tau+d)+\mathbb{Z}(a\tau+b)=(c\tau+d)\left(\mathbb Z+\mathbb{Z}\tilde{\tau}\right),$$
we know that $E_{\tau}$ and $E_{\tilde{\tau}}$ are conformally equivalent, and
$G(z;\tau)=G(\frac{z}{c\tau+d};\tilde{\tau})$, which implies 
$$G_n(\boldsymbol z;\tau)=G_n\Big(\frac{\boldsymbol z}{c\tau+d};\tilde{\tau}\Big).$$
Consequently, $\boldsymbol a$ is a (degenerate) critical point of $G_n(\cdot;\tau)$ if and only if $\frac{\boldsymbol a}{c\tau+d}$ is a (degenerate) critical point of $G_n(\cdot;\tilde{\tau})$. From here, we obtain
\begin{align*}
\mathcal O_n&=\{\tau\in \mathbb H\;:\; \text{$G_n(\boldsymbol{z};\tau)$ has degenerate critical points}\}\\
&=\bigcup_{\gamma\in \Gamma_0(2)}(\mathcal O_n\cap\gamma(F_0))
=\bigcup_{\gamma\in \Gamma_0(2)}\gamma(\mathcal O_n\cap F_0).
\end{align*}
Therefore, once we can prove that $\mathcal O_n\cap F_0$ is of Lebesgue measure zero, then so is $\gamma(\mathcal O_n\cap F_0)$ for any $\gamma
\in \Gamma
_{0}(2)$, and so is $\mathcal O_n$.
\end{Remark}

\begin{proof}[Proof of Theorem \ref{main-thm-1}] Define
\[\mathcal{O}_n^T:=\{\tau\in F_0\;:\; \text{$G_n(\boldsymbol{z};\tau)$ has degenerate trivial critical points}\},\]
\[\mathcal{O}_n^N:=\{\tau\in F_0\;:\; \text{$G_n(\boldsymbol{z};\tau)$ has degenerate nontrivial critical points}\},\]
then $\mathcal{O}_n\cap F_0=\mathcal{O}_n^T\cup \mathcal{O}_n^N$.

{\bf Step 1.} We prove that $\mathcal{O}_n^T$ is of Lebesgue measure zero.

Indeed, by applying results from \cite{Eremenko-GT}, it was proved in \cite{Lin-JDG25} that
\[\mathcal{O}_n^T=\big\{\tau\in F_0\,:\,\text{$\ell_n(\cdot;\tau)$ has multiple zeros}\big\}\bigcup \text{$\frac{3n(n+1)}{2}$ Lin-Wang curves},\]
where a Lin-Wang curve was first defined by \cite{Eremenko-GT}, and is a real analytic curve in $F_0$ without self-intersection and can be parametrized by one of $(-\infty, 0), (0,1), (1,+\infty)$. This implies that the union of $\frac{3n(n+1)}{2}$ Lin-Wang curves is of of Lebesgue measure zero.
Furthermore, since the discriminant of $\ell_n(\cdot;\tau)$ is a non-zero modular form, it is known (cf. \cite{CLW}) that the set $\{\tau\in F_0:\text{$\ell_n(\cdot;\tau)$ has multiple zeros}\}$ is finite. Therefore, $\mathcal{O}_n^T$ is of Lebesgue measure zero.

{\bf Step 2.} We prove that $\mathcal{O}_n^N$ is of Lebesgue measure zero.

Let $\tau\in F_0$ and
suppose $\pm\boldsymbol a=\pm(a_1,\cdots, a_n)$ is a pair of nontrivial critical points of $G_{n}(\boldsymbol z;\tau)$. By choosing suitable representative $a_j$'s from $a_j+\mathbb Z+\mathbb Z\tau$ and by replacing $\boldsymbol a$ by $-\boldsymbol a$ if necessary, we may assume
$\sum_{j}a_j=r+s\tau$ with $(r,s)\in [0,1]\times [0,\frac12]$. Then Theorem \ref{thm-clw2} implies $(r,s)\in[0,1]\times [0,\frac12]\setminus\frac{1}{2}\mathbb{Z}^2$ and $Z_{r,s}^{(n)}(\tau)=0$.
Therefore, it follows from the definition \eqref{eq-en} of $\mathcal{E}_n$ that
\begin{align}\label{enen}
\mathcal{E}_n\cap F_0&=\{\tau\in F_0\,:\, G_n(\boldsymbol{z};\tau)\;\text{has nontrivial critical points}\}\\
&=\{\tau\in F_0\,:\, Z_{r,s}^{(n)}(\tau)=0\;\text{for some }\,(r,s)\in[0,1]\times [0,\tfrac12]\setminus\tfrac{1}{2}\mathbb{Z}^2\}.\nonumber
\end{align}

Conversely, we
define four open triangles in $\mathbb R^2$:
{\allowdisplaybreaks
\begin{align}\label{rectangle}
&\triangle_{0}:=\{(r,s)\mid0<r,s<\tfrac{1}{2},\text{ }r+s>\tfrac{1}{2}\},\nonumber\\
&\triangle_{1}:=\{(r,s)\mid \tfrac{1}{2}<r<1,\text{ }0<s<\tfrac{1}{2},\text{
}r+s>1\},\nonumber\\
&\triangle_{2}:=\{(r,s)\mid \tfrac{1}{2}<r<1,\text{ }0<s<\tfrac{1}{2},\text{
}r+s<1\},\nonumber\\
&\triangle_{3}:=\{(r,s)\mid r>0,\text{ }s>0,\text{ }r+s<\tfrac{1}{2}\}.\nonumber
\end{align}
}%
Then $[0,1]\times \lbrack0,\frac{1}{2}]=\cup_{k=0}^{3}\overline
{\triangle_{k}}$. 
It was proved in \cite{Lin-JDG25} that there are exactly $\frac{n(n+1)}{2}$ analytic maps
\[\tau_j: \Omega_j\to F_0,\quad 1\leq j\leq \frac{n(n+1)}{2},\]
with
\[\Omega_j\in \{\triangle_{0}, \triangle_{1}, \triangle_{2}, \triangle_{3}\},\quad \forall j,\]
such that the followings hold:
\begin{itemize}
\item[(a)]
$Z_{r,s}^{(n)}(\tau_j(r,s))\equiv0$ for any $j$. Conversely, if $\tau\in F_0$ and $(r,s)\in [0,1]\times [0,\frac12]\setminus\frac{1}{2}\mathbb{Z}^2$ satisfy $Z_{r,s}^{(n)}(\tau)=0$, then there is a unique $j$ such that $(r,s)\in \Omega_j$ and $\tau=\tau_j(r,s)$.

\item[(b)] Recalling \eqref{enen}, there holds
\[\mathcal{E}_n\cap F_0=\{\tau\in F_0\,:\, G_n(\boldsymbol{z};\tau)\;\text{has nontrivial critical points}\}
= \bigcup_{j=1}^{\frac{n(n+1)}{2}}\tau_j(\Omega_j).\]
\item[(c)] For $\tau\in \mathcal{E}_n\cap F_0$ and
 $\boldsymbol a=(a_1,\cdots, a_n)$ being a nontrivial critical point of $G_{n}(\boldsymbol z;\tau)$ such that $\sum_{j}a_j=r+s\tau$ with $(r,s)\in[0,1]\times [0,\frac12]\setminus\frac{1}{2}\mathbb{Z}^2$, there is a unique $j$ such that $(r,s)\in \Omega_j$ and $\tau=\tau_j(r,s)$.
\end{itemize}

By (c), it follows from Theorem \ref{thm-sz} and the implicit function theorem that this map $\tau_j: \Omega_j\to F_0$ coincides with the local map in  Remark \ref{Rmk2-4} and Corollary \ref{Coro-11}, at least in a small neighborhood of $(r,s)$ in $\mathbb R^2$. 
Then Theorem \ref{DD-conj0} implies
\[\det D^2 G_n(\boldsymbol{a};\tau) = \frac{(-1)^n n^2}{4(2\pi)^{2n+2} \operatorname{Im}\tau} c_{\boldsymbol{a}} |\tau_{j,r}(r, s)|^2\operatorname{Im}
\left(\frac{\tau_{j,s}}{\tau_{j,r}}(r, s)\right),\]
with $c_{\boldsymbol a}>0$.
Note that $$\frac{\tau_{j,s}}{\tau_{j,r}}\in\mathbb{R}\cup\{\infty\}\quad\Longleftrightarrow\quad  \det\begin{pmatrix}\frac{\partial \operatorname{Re}\tau_j}{\partial r} & \frac{\partial \operatorname{Re}\tau_j}{\partial s}\\\frac{\partial \operatorname{Im}\tau_j}{\partial r} & \frac{\partial \operatorname{Im}\tau_j}{\partial s}\end{pmatrix}=0.$$
So by applying Sard theorem to $(\operatorname{Re}\tau_j, \operatorname{Im}\tau_j): \Omega_j\subset\mathbb{R}^2\to \mathbb{R}^2$, it follows that
$$\tau_j\left(\Big\{(r,s)\in\Omega_j : \frac{\tau_{j,s}}{\tau_{j,r}}\in\mathbb{R}\cup\{\infty\}\Big\}\right)$$
is of Lebesgue measure zero. Therefore, by Theorem \ref{DD-conj0}, we finally arrive that
\begin{align*}
\mathcal{O}_n^N&=\{\tau\in F_0\;:\; \text{$G_n(\boldsymbol{z};\tau)$ has degenerate nontrivial critical points}\}\\
&=\{\tau\in \mathcal{E}_n\cap F_0\;:\; \text{$G_n(\boldsymbol{z};\tau)$ has degenerate nontrivial critical points}\}\\
&=\bigcup_{j=1}^{\frac{n(n+1)}{2}}\tau_j\left(\Big\{(r,s)\in\Omega_j : \frac{\tau_{j,s}}{\tau_{j,r}}\in\mathbb{R}\cup\{\infty\}\Big\}\right)
\end{align*}
is also of Lebesgue measure zero.

By Step 1 and Step 2, we see that $\mathcal{O}_n\cap F_0=\mathcal{O}_n^T\cup \mathcal{O}_n^N$ is of Lebesgue measure zero. Then by Remark \ref{Rmk4-1}, we conclude that $\mathcal{O}_n$ is of Lebesgue measure zero. The proof is complete.
\end{proof}

\subsection*{Acknowledgements} Z. Chen was supported by National Key R\&D Program of China (Grant 2023YFA1010002) and NSFC (No. 12222109). E. Fu was supported by NSFC (No. 12401188) and BIMSA Start-up Research Fund.


\begin{thebibliography}{99}


\bibitem {BKLY}D. Bartolucci, A. Jevnikar, Y. Lee and W. Yang;
\textit{Uniqueness of bubbling solutions of mean field equations}. J. Math. Pure Appl.
\textbf{123} (2019), 78-126.

\bibitem{BT} D. Bartolucci and G. Tarantello; \textit{Liouville type equations with singular data and their applications to periodic multivortices for the electroweak theory}. Comm. Math. Phys. \textbf{229} (2002), 3-47.

\bibitem{BE} W. Bergweiler and A. Eremenko; \textit{Green's function and anti-holomorphic dynamics on a torus}.
Proc. Amer. Math. Soc. \textbf{144} (2016), 2911–2922.


\bibitem {CLMP}E. Caglioti, P. L. Lions, C. Marchioro and M. Pulvirenti;
\textit{A special class of stationary flows for two-dimensional Euler
equations: a statistical mechanics description.} Comm. Math. Phys.
\textbf{143} (1992), 501-525.



\bibitem {CLW}{ C.L. Chai, C.S. Lin and C.L. Wang; \textit{Mean field
equations, hyperelliptic curves, and modular forms: I}. Camb. J. Math. \textbf{3} (2015), 127-274.}



\bibitem {CL-1}C.C. Chen and C.S. Lin; \textit{Sharp estimates for
solutions of multi-bubbles in compact Riemann surfaces}. Comm. Pure Appl.
Math. \textbf{55} (2002), 728-771.



\bibitem{CKL-Dahmen} Z. Chen, T.J. Kuo and C.S. Lin; \textit{Proof of a conjecture of Dahmen and Beukers on counting integral Lam\'{e} equations with finite monodromy}. arXiv: 2105.04734v1 [math.NT]. Submitted 

\bibitem {CKLW}Z. Chen, T.J. Kuo, C.S. Lin and C.L. Wang; \textit{Green
function, Painlev\'{e} VI equation, and Eisenstein series of weight one}. J. Differ. Geom. \textbf{108} (2018), 185-241.



\bibitem{CL-JDG21} Z. Chen and C.S. Lin; \textit{Exact number and non-degeneracy of critical points of multiple Green functions on rectangular tori}. J.
Differ. Geom. \textbf{118} (2021), 457-485.

\bibitem{CL-AIM25} Z. Chen and C.S. Lin; \textit{Monodromy of generalized Lam\'{e} equations with
 Darboux-Treibich-Verdier potentials: A universal law}. Adv. Math. To appear.

\bibitem {Dahmen0}S. Dahmen; \textit{Counting integral Lam\'{e} equations with
finite monodromy by means of modular forms}. Master Thesis. Utrecht University 2003.

\bibitem{Dahmen} S. Dahmen; \textit{Counting integral Lam\'{e} equations by means of dessins d'enfants}. Trans. Amer. Math. Soc. \textbf{359} (2007),
909-922.



\bibitem{Eremenko-GT} A. Eremenko, G. Mondello and D. Panov; \textit{Moduli of spherical tori with one conical point}.
Geom. Topol. \textbf{27} (2023), 3619–3698.



\bibitem {Hecke} E. Hecke,
{\it Zur Theorie der elliptischen Modulfunctionen},
Math. Ann.  \textbf{97} (1926), 210--242.

\bibitem{Lin-JDG25} C.S. Lin; \textit{Spherical metrics with one singularity and odd integer angle on flat tori, I}. J. Differ. Geom. to appear.

\bibitem {LW} C.S. Lin and C.L. Wang; \textit{Elliptic functions, Green
functions and the mean field equations on tori}. Ann. Math. \textbf{172}
(2010), 911-954.

\bibitem {LW2} C.S. Lin and C.L. Wang; \textit{Mean field equations,
hyperelliptic curves, and modular forms: II. With an appendix by Y.C. Chou}.  J. \'{E}c. polytech. Math.
\textbf{4} (2017), 557-593.

\bibitem {LW4}C.S. Lin and C.L. Wang; \textit{On the minimality of extra
critical points of Green functions on flat tori}, Int. Math. Res. Not. \textbf{2017} (2017), 5591-5608.

\bibitem {LW-CAG} C.S. Lin and C.L. Wang; \textit{Geometric quantities arising from bubbling analysis of
mean field equations}. Comm. Anal. Geom.
\textbf{28} (2020), 1289-1313. 

\bibitem {LY}C.S. Lin and S. Yan; \textit{Existence of bubbling solutions for
Chern-Simons model on a torus.} Arch. Ration. Mech. Anal. \textbf{207} (2013), 353-392.

\bibitem {Maier}R. Maier; \textit{Lam\'{e} polynomail,
hyperelliptic reductions and Lam\'{e} band structure. }Philos.
Trans. R. Soc. A \textbf{366} (2008), 1115-1153.













\bibitem {Whittaker-Watson} E. Whittaker and G. Watson; \textit{A course of
modern analysis}. Cambridge University Press, 1996.
\end{thebibliography}
\end{document}